\numberwithin{equation}{section}
\newtheorem{thm}{Theorem}[section]
\newtheorem{theorem}[thm]{Theorem}
\newtheorem{corollary}[thm]{Corollary}
\newtheorem{lemma}[thm]{Lemma}
\newtheorem{proposition}[thm]{Proposition}
\theoremstyle{definition}
\newtheorem{definition}[thm]{Definition}
\newtheorem{example}[thm]{Example}
\newtheorem{observation}[thm]{Observation}
\newtheorem{remark}[thm]{Remark}
\newtheorem*{theorem*}{Theorem}
\begin{document}

\newcommand{\comment}[1]{{\color{blue}\rule[-0.5ex]{2pt}{2.5ex}}
\marginpar{\scriptsize\begin{flushleft}\color{blue}#1\end{flushleft}}}

\newcommand{\rosso}[1]{{\color{red}{\sl #1}}}
\newcommand{\blu}[1]{{\color{blue}#1}}
\newcommand{\viola}[1]{{\color{violet}{\sl #1}}}
\newcommand{\mar}[1]{{\color{brown}#1}}
\newcommand{\segnab}[1]{\blu{$\overline{\rule{0pt}{1pt}\smash[t]{\text{\color{black}#1}}}$}}
\newcommand{\msegnab}[1]{\blu{\overline{\rule{0pt}{1pt}\smash[t]{\color{black}#1}}}}
\newcommand{\segnar}[1]{\rosso{$\overline{\rule{0pt}{1pt}\smash[t]{\text{\color{black}#1}}}$}}
\newcommand{\msegnar}[1]{\rosso{\overline{\rule{0pt}{1pt}\smash[t]{\color{black}#1}}}}

\newcommand{\be}{\begin{equation}}
\newcommand{\ee}{\end{equation}}
\newcommand{\bea}{\begin{eqnarray}}
\newcommand{\eea}{\end{eqnarray}}
\newcommand{\bean}{\begin{eqnarray*}}
\newcommand{\eean}{\end{eqnarray*}}
\newcommand{\qi}{q^{-1}}
\newcommand{\ap}{\alpha}
\newcommand{\bt}{\beta}
\newcommand{\di}{\mathrm{d}}

\newcommand{\R}{\mathbb{R}}
\newcommand{\C}{\mathbb{C}}
\newcommand{\Z}{\mathbb{Z}}
\newcommand{\N}{\mathbb{N}}
\newcommand{\bM}{\mathbb{M}}
\newcommand{\g}{\mathfrak{G}}
\newcommand{\epsi}{\varepsilon}

\newcommand{\hs}{\hfill\square}
\newcommand{\hbs}{\hfill\blacksquare}

\newcommand{\bp}{\mathbf{p}}
\newcommand{\bmax}{\mathbf{m}}
\newcommand{\bT}{\mathbf{T}}
\newcommand{\bU}{\mathbf{U}}
\newcommand{\bP}{\mathbf{P}}
\newcommand{\bA}{\mathbf{A}}
\newcommand{\bm}{\mathbf{m}}
\newcommand{\bIP}{\mathbf{I_P}}

\newcommand{\cm}{\gamma}
\newcommand{\dt}{\delta}

\newcommand{\cA}{\mathcal{A}}
\newcommand{\cB}{\mathcal{B}}
\newcommand{\cN}{\mathcal{N}}
\newcommand{\cC}{\mathcal{C}}
\newcommand{\cI}{\mathcal{I}}
\newcommand{\Ogh}{\mathcal{O}(G/P)}
\newcommand{\cM}{\mathcal{M}}
\newcommand{\cO}{\mathcal{O}}
\newcommand{\cG}{\mathcal{G}}
\newcommand{\cU}{\mathcal{U}}
\newcommand{\cJ}{\mathcal{J}}
\newcommand{\cF}{\mathcal{F}}
\newcommand{\cV}{\mathcal{V}}
\newcommand{\cP}{\mathcal{P}}
\newcommand{\ep}{\mathcal{E}}
\newcommand{\E}{\mathcal{E}}
\newcommand{\cH}{\mathcal{O}}
\newcommand{\cPO}{\mathcal{PO}}
\newcommand{\cl}{\ell}
\newcommand{\cFG}{\mathcal{F}_{\mathrm{G}}}
\newcommand{\cHG}{\mathcal{H}_{\mathrm{G}}}
\newcommand{\Gal}{G_{\mathrm{al}}}
\newcommand{\cQ}{G_{\mathcal{Q}}}

\newcommand{\ri}{\mathrm{i}}
\newcommand{\re}{\mathrm{e}}
\newcommand{\rd}{\mathrm{d}}

\newcommand{\rGL}{\mathrm{GL}}
\newcommand{\rSU}{\mathrm{SU}}
\newcommand{\rSL}{\mathrm{SL}}
\newcommand{\rSO}{\mathrm{SO}}
\newcommand{\rOSp}{\mathrm{OSp}}
\newcommand{\rSpin}{\mathrm{Spin}}
\newcommand{\rsl}{\mathrm{sl}}
\newcommand{\rM}{\mathrm{M}}
\newcommand{\rdiag}{\mathrm{diag}}
\newcommand{\rP}{\mathrm{P}}
\newcommand{\rdeg}{\mathrm{deg}}
\newcommand{\Derg}{\mathrm{Der}_\Gamma}
\newcommand{\Der}{\mathrm{Der}}

\newcommand{\M}{\mathrm{M}}
\newcommand{\End}{\mathrm{End}}
\newcommand{\Hom}{\mathrm{Hom}}
\newcommand{\diag}{\mathrm{diag}}
\newcommand{\rspan}{\mathrm{span}}
\newcommand{\rank}{\mathrm{rank}}
\newcommand{\Gr}{\mathrm{Gr}}
\newcommand{\ber}{\mathrm{Ber}}
\newcommand{\coinv}{\mathrm{coinv}}

\newcommand{\fsl}{\mathfrak{sl}}
\newcommand{\fg}{\mathfrak{g}}
\newcommand{\ff}{\mathfrak{f}}
\newcommand{\fgl}{\mathfrak{gl}}
\newcommand{\fosp}{\mathfrak{osp}}
\newcommand{\fm}{\mathfrak{m}}

\newcommand{\str}{\mathrm{str}}
\newcommand{\Sym}{\mathrm{Sym}}
\newcommand{\tr}{\mathrm{tr}}
\newcommand{\defi}{\mathrm{def}}
\newcommand{\Ber}{\mathrm{Ber}}
\newcommand{\spec}{\mathrm{Spec}}
\newcommand{\sschemes}{\mathrm{(sschemes)}}
\newcommand{\sschemeaff}{\mathrm{ {( {sschemes}_{\mathrm{aff}} )} }}
\newcommand{\rings}{\mathrm{(rings)}}
\newcommand{\Top}{\mathrm{Top}}
\newcommand{\sarf}{ \mathrm{ {( {salg}_{rf} )} }}
\newcommand{\arf}{\mathrm{ {( {alg}_{rf} )} }}
\newcommand{\odd}{\mathrm{odd}}
\newcommand{\alg}{\mathrm{(alg)}}
\newcommand{\sa}{\mathrm{(salg)}}
\newcommand{\sets}{\mathrm{(sets)}}
\newcommand{\SA}{\mathrm{(salg)}}
\newcommand{\salg}{\mathrm{(salg)}}
\newcommand{\varaff}{ \mathrm{ {( {var}_{\mathrm{aff}} )} } }
\newcommand{\svaraff}{\mathrm{ {( {svar}_{\mathrm{aff}} )}  }}
\newcommand{\ad}{\mathrm{ad}}
\newcommand{\Ad}{\mathrm{Ad}}
\newcommand{\pol}{\mathrm{Pol}}
\newcommand{\Lie}{\mathrm{Lie}}
\newcommand{\Proj}{\mathrm{Proj}}
\newcommand{\rGr}{\mathrm{Gr}}
\newcommand{\rFl}{\mathrm{Fl}}
\newcommand{\rPol}{\mathrm{Pol}}
\newcommand{\rdef}{\mathrm{def}}

\newcommand{\uspec}{\underline{\mathrm{Spec}}}
\newcommand{\uproj}{\mathrm{\underline{Proj}}}

\newcommand{\sym}{\cong}
\newcommand{\al}{\alpha}
\newcommand{\lam}{\lambda}
\newcommand{\de}{\delta}
\newcommand{\dd}{\delta}
\newcommand{\D}{\Delta}
\newcommand{\s}{\sigma}
\newcommand{\sig}{\sigma}
\newcommand{\lra}{\longrightarrow}
\newcommand{\ga}{\gamma}
\newcommand{\ra}{\rightarrow}
\newcommand{\tE}{\widetilde E}

\newcommand{\NOTE}{\bigskip\hrule\medskip}

\newcommand{\beq}{\begin{equation}}
\newcommand{\eeq}{\end{equation}}
\newcommand{\ddet}{\mathrm{det}}
\newcommand{\proj}{\mathrm{proj}}
\newcommand{\pr}{\mathrm{pr}}
\newcommand{\Ubar}{\overline{U}}
\newcommand{\cT}{\mathcal{T}}
\newcommand{\cL}{\mathcal{L}}
\newcommand{\cE}{\mathcal{E}}

\newcommand{\id}{\mathrm{id}}
\newcommand{\coi}{\mathrm{co}}

\newcommand{\les}{{\,\raisebox{0.95 pt}{\scalebox{0.57}[0.57]{\mbox{$\leqslant$}}}}}

\medskip

\centerline{\Large \bf Differential Calculi on Quantum Principal Bundles} 

\medskip
\centerline{\Large  \bf over Projective Bases}

\medskip
\medskip
\medskip
\medskip
\centerline{\Large{{P. Aschieri$,^{\!1,2,3}$ R. Fioresi$,^{\!5,6}$ 
E. Latini$,^{4,6}$ T. Weber$^{\,2,7}$} }}

\vskip 0.5 cm
\centerline{$^1${\sl Dipartimento di Scienze e Innovazione 
Tecnologica, Universit\`a del Piemonte Orientale}}

\centerline{{\sl  
Viale T. Michel - 15121, Alessandria, Italy}}
\centerline{\texttt{paolo.aschieri@uniupo.it}}

\vskip 0.1 cm

\centerline{$^2${\sl Istituto Nazionale di Fisica Nucleare, Sezione di Torino,
  via P. Giuria 1, 10125 Torino}}

\vskip 0.1 cm
\centerline{$^3${\sl Regge Center, Torino, via P. Giuria 1, 10125, Torino, Italy}}

\vskip 0.5 cm

\centerline{
$^4${\sl Dipartimento di Matematica, Universit\`{a} di
Bologna}}

\centerline{\sl Piazza di Porta S. Donato 5, I-40126 Bologna, Italy}

\centerline{\texttt{emanuele.latini@UniBo.it}}
\bigskip

\centerline{
$^4${\sl FaBiT, Universit\`{a} di
Bologna}}

\centerline{\sl via S. Donato 15, I-40126 Bologna, Italy}

\centerline{\texttt{rita.fioresi@UniBo.it}}
\bigskip

\centerline{$^6${\sl Istituto Nazionale di Fisica Nucleare, Sezione di Bologna,}}
\centerline{via Berti Pichat 6/2, I-40127 Bologna}

\vskip 0.5 cm
\centerline{$^7${\sl Dipartimento di Matematica "Giuseppe Peano", Universit\`a degli Studi di Torino}}

\centerline{{\sl  
via Carlo Alberto 10, 10123 Torino, Italy}}
\centerline{\texttt{thomas.weber@unito.it}}

\bigskip

\begin{abstract}

We propose a sheaf-theoretic approach to the theory of differential
calculi on quantum principal bundles over non-affine bases.
After recalling the affine case we define differential calculi
on sheaves of comodule algebras as sheaves of covariant bimodules
together with a morphism of sheaves -the differential-  such that the Leibniz rule
and surjectivity hold locally. The main
class of examples is given by covariant calculi over quantum flag
manifolds, which we provide via an explicit Ore extension
construction. 
In a second step  we introduce principal covariant calculi 
by requiring a local compatibility of the calculi on the total sheaf,
base sheaf and the structure Hopf algebra in terms of exact
sequences.
In this case Hopf--Galois extensions of algebras lift to
Hopf--Galois extensions of exterior algebras with compatible
differentials.
In particular, the examples of principal (covariant)
calculi on the quantum principal bundles
$\mathcal{O}_q(\mathrm{SL}_2(\mathbb{C}))$ and
$\mathcal{O}_q(\mathrm{GL}_2(\mathbb{C}))$  over the projective space
$\mathrm{P}^1(\mathbb{C})$
are discussed in detail.
\end{abstract}

\setcounter{tocdepth}{2}

\tableofcontents

\section{Introduction} \label{intro-sec}

While in classical differential geometry there is a canonical
(functorial) construction of the differential calculus on a
differentiable manifold, a main feature of noncommutative differential
geometry is the non uniqueness of the calculus. Even on quantum
groups and requiring (co)invariance conditions there are interesting
different calculi \cite{wor}. In this perspective, the definition of a
noncommutative geometry requires also a characterization of its
differential calculus structure. Going beyond quantum groups we have quantum
homogeneous spaces and quantum principal bundles. Their study has been
very active in the last three decades when the base space is a
noncommutative deformation of an affine variety. In this case, as usual in
noncomutative geometry, the
affine geometric objects are replaced by a deformation of the algebra
of functions on them. The case of bundles
on projective varieties is intrinsically more difficult
since it requires patching  affine opens.
In noncommutative geometry, when it is necessary
to go beyond the affine setting, we  must take the ringed space approach
as expressed by Grothendieck:
\textit{to do geometry one does not need the space itself, 
but only the category of sheaves on that would-be space} (see
\cite{gr} and also \cite{sk2} with refs. therein 
for more insight on this point of view).

In this paper we propose an approach to noncommutative differential geometry
which employs the sheaf-theoretic language developed in
\cite{AFL}. This does not only cover the established affine theory but
also allows to consider differential structures
for quantum principal bundles over non-affine bases.

\medskip 
\medskip
\medskip

Important steps towards a sound foundation of noncommutative 
algebraic geometry were taken in the works \cite{az, kr, ro, vov}.
In \cite{az},  quantum projective schemes are understood as the
category of modules associated with a given graded noncommutative algebra,
while in \cite{ro}, an affine quantum space 
is viewed as the spectrum of a noncommutative
ring and quantum projective schemes are defined accordingly with
a gluing procedure in analogy with the classical $\mathrm{Proj}$
construction.
Indeed this is our perspective on non-affine noncommutative spaces: we 
shall define, following \cite{AFL}, a quantum space as a locally
ringed space and then build quantum principal bundles over quantum
spaces as sheaves with suitable (co)invariant properties  with respect
to a fixed quantum structure group.

While we have extensive studies on quantum principal bundles in the affine setting (see e.g . \cite{brz1, Dabrowski, bh, HKMZ}),
the more general non-affine case (considered e.g.~in \cite{hps}) lacks a
comprehensive and exhaustive treatment. In \cite{AFL}  we study this
more general setting combining the
sheaf approach to principal bundles with affine bases of
\cite{pflaum} (see also \cite{cipa} and \cite{acp}) with the
quantum projective
homogeneous spaces construction of \cite{cfg,fg}, where 
a graded quantum ring  is
built out of the key notion of {\sl quantum section}, 
associated to a quantum line bundle, with a gluing procedure similar to the classical
$\mathrm{Proj}$ one in \cite{ro}.  In particular, we study the
important example of quantum principal bundles on quantum flags of algebraic groups, the
total space algebra being a quantum group.

More specifically, the main construction  is based on quantizations
of the classical principal bundle $G \to G/P$, where $G$ is a
semisimple complex algebraic group and $P$ a parabolic subgroup, so that
$G/P$ is a projective variety. In this case the $P$-invariant ring
$\cO(G/P)$, i.e., the $\cO(P)$-coaction invariant (for short
coinvariant) ring $\cO(G/P)$ is trivially $\mathbb{C}$ and is then replaced by the homogeneous coordinate ring
$\tilde\cO(G/P)$ of $G/P$ with respect to a chosen projective
embedding, associated with a very ample line bundle $\mathcal{L}$. This line
bundle  $\mathcal L$ can be recovered more algebraically via a
character $\chi$ of $P$; the corresponding sections are the
$\cO(P)$-semi-coinvariant elements of $\cO(G)$ with respect to $\chi$
and generate the homogeneous coordinate ring $\tilde\cO(G/P)$ of
$G/P$. 
The relation
between this latter and the structure sheaf $\cO_{G/P}$ of $G/P$
is then as usual by considering projective
localizations (zero degree subalgebras of the localizations) of
$\tilde\cO( G / P )$.

Let $\cO_q(G)$ be a quantum deformation of $G$ and $\cO_q(P)$ 
a quantum deformation of a parabolic subgroup;
both $\cO_q(G)$ and $\cO_q(P)$ are Hopf algebras (see \cite{cfg} Sect.~3 
and \cite{AFL}). 
Via the datum of a {\sl quantum section} $s\in\cO_q(G)$, quantum
version of the lift to $\cO(G)$ of the character $\chi$ of $P$ defining the line
bundle $\mathcal{L}$, we obtain a quantization of $\mathcal L$.
Indeed, through the quantum section  we define a graded algebra $\tilde\cO_q(G/P)$
  (quantum homogeneous coordinate ring):
  each graded component consists of the elements of $\cO_q(G)$, which
are not coinvariant, but transform
according to powers of the projection of $s$ on $\cO_q(P)$.
Furthermore, the quantum section $s$ allows to 
define two sheaves $\cF_G$ and $\cO_M$ on $M=G/P$ 
and in \cite{AFL} we prove
that
$\cF_G$ is a sheaf of 
$\cO_q(P)$-comodule
algebras on the quantum ringed space $(M,\cO_M)$.  Given the locally principal
comodule algebra property (or faithfully flatness property) of
$\cF_G$, then this is a quantum
principal bundle. In particular the (coproduct of the) quantum section determines 
an open cover $\{V_i\}$ of $G$. The projected open cover $\{U_i:=p(V_i)\}$ of
$M$ (where $p:G\to M=G/P$) then lead to the intersections $U_I:=U_{i_1} \cap \dots \cap U_{i_r}$, 
$I=(i_1,\dots, i_r)$ which form the basis $\mathcal{B}$ for the topology used to define both the sheaves
$\cF_G$ and $\cO_M$.

\medskip 
\medskip
\medskip

In the present paper we develop a theory of differential calculi on
sheaves which is suited but not limited to affine and non-affine
examples such as quantum flags for quantum algebraic groups.
In particular we construct first order differential calculi on the quantum principal bundles $\cF_G$ that are
canonically given once a calculus on $\cO_q(G)$ is chosen.
For a different approach to
quantum differential calculi on quantum flags
we refer to \cite{HK1, HK2, ROB2}.

We start with the discussion of first order
differential calculus in the affine 
setting, that is, when the base and total spaces
are affine. Our focus is on the construction and induction of differential structures:
via tensor products, algebra homomorphisms and quotients. This also on the level of
covariant and bicovariant first order differential calculi.
In particular, we recall the construction of covariant calculi on smash
product algebras from \cite{pflaum2}, where a calculus on a module algebra
and a bicovariant calculus on the structure Hopf algebra shape the so-called
smash product calculus. This affine framework has been fruitfully investigated
in a series of works, including \cite{brz1, DD, KLS, lz, ROB, pflaum2}.

{We conclude our discussion of the affine setting with a
treatment of principal covariant calculi.
Associated with a commutative principal bundle we canonically have the exact
sequence of horizontal forms into forms on the total space onto
vertical forms. Even for principal comodule algebras (faithfully flat
Hopf--Galois extensions) exactness of this sequence in the
noncommutative case is an extra requirement. It is known that the
exterior algebra $\Omega^\bullet_H$ of a bicovariant calculus on a
quantum group $H$ is a graded Hopf algebra. We here for simplicity study the quotient
algebra $\Omega^{\les 1}_H=H\oplus\Gamma_H$ where forms of degree 2
and higher are set to zero. Similarly we can study when the (truncated) exterior
algebra $\Omega^{\leqslant 1}_A=A\oplus\Gamma_A$ is a $\Omega^{{\scriptstyle{\leqslant}}
  1}_H$-comodule algebra, and we can study the induced module of one forms $\Gamma_B$ on the
subalgebra $B=A^{\coi H}\subseteq A$ of coinvariants.
It was shown in \cite{SchHG,SchHGFF} that exactness of the sequence
of modules
$$
0\rightarrow A\otimes_B\Gamma_B\longrightarrow\Gamma_A
\longrightarrow
    A\square_H\Gamma_H\rightarrow 0
$$
(where the cotensor product  $A\square_H\Gamma_H$ equals $ A\otimes {}^{\coi H}\Gamma_H $,
with ${}^{\coi H}\Gamma_H$  the module of left coinvariant one-forms on
the quantum group $H$) is
equivalent to principality of the graded Hopf--Galois extension
$(\Omega^{\leqslant 1}_A)^{\coi\:\! \Omega^{\leqslant 1}_H}\subseteq \Omega^{\leqslant 1}_A$.
We revisit these results
considering differential graded algebras, where the above sequence
becomes that of noncommutative horizontal forms, forms on $A$,
vertical forms. The sequence then defines a principal covariant calculus on 
$A$. 
We show that these calculi correspond to graded Hopf--Galois extensions
where the $\Omega^{\leqslant 1}_H$-coaction  is differentiable (compatible with the
differentials on $A$ and on $A\otimes H$).
The main examples  of differential calculi on sheaves we shall
encounter satisfy such an exact sequence locally. In this section
dedicated to the affine setting we
further clarify the relation of principal covariant calculi to quantum principal bundles as
pioneered in \cite{brz1}.

\medskip

Building on the affine case results we proceed to introduce the sheaf approach to first order differential calculi. Namely, we define a first order differential calculus
$(\Upsilon,\mathrm{d})$ on a sheaf $\mathcal{F}$ of comodule algebras
as a sheaf $\Upsilon$ of covariant $\mathcal{F}$-bimodules together with
a morphism $\mathrm{d}\colon\mathcal{F}\rightarrow\Upsilon$ of sheaves
of comodules. We demand the Leibniz rule and surjectivity
of the $\mathcal{F}$-linear span of the differential only locally on
stalks. These are the two characterizing
properties of a first order differential calculus, and their holding true on stalks goes along with
the property that stalks of a quantum principal bundle are principal comodule algebras. 

The following result, proven in Theorem \ref{thm02}, shows that
the construction of the sheaf $\mathcal{F}_G$ from the Hopf algebra
$\mathcal{O}_q(G)$ developed in \cite{AFL} (with $G$ a semisimple complex
algebraic group, $P$ a parabolic subgroup) extends to the level of first order differential calculi.

\begin{theorem*} \it{Let $(\Gamma,\mathrm{d})$ be a right covariant first order differential calculus on the Hopf algebra $\cO_q(G)$.
\begin{enumerate}
\item[i.)] The assignment
\begin{equation*}
    \Upsilon_G\colon U_I\mapsto
    \Upsilon_G(U_I):= \cF_G(U_I) \otimes_{\cO_q(G)} \Gamma \otimes_{\cO_q(G)} \cF_G(U_I)
\end{equation*}
with $\mathrm{d}\colon U_I\mapsto$
$(\mathrm{d}_I\colon\cF(U_I)\rightarrow\Upsilon_G(U_I))$
induced by Ore extensions, defines a
right $\cO_q(P)$-covariant first order differential calculus
on the sheaf $\cF_G$.

\item[ii.)] The first order differential calculus $(\Upsilon_G,\mathrm{d})$ on the sheaf 
$\cF_G$  
induces a first order differential calculus $(\Upsilon_M,\mathrm{d}_M)$ on the sheaf
$\cO_M=\mathcal{F}_G^{\coi\!\: \cO_q(P)}$.
\item[iii.)] If $\cF_G$ is a quantum principal bundle, the sheaf of base forms is isomorphic, as a sheaf of $\cO_M$-bimodules, to the intersection
of that of horizontal and $\cO_q(P)$-coinvariant forms: $\Upsilon^{}_M\cong\Upsilon_G^{\mathrm{co}_{\:\!}\cO_q(P)}\cap\Upsilon_G^\mathrm{hor}
$.
\end{enumerate}}
\end{theorem*}
}
\noindent For the sheaf $\cF_G$, which is obtained from the Hopf algebra
$\cO_q(G)$, the definition of the calculus on $\cO_q(G)$ uniquely
determines the quotient calculus on $\cO_q(P)$, and from
Theorem~\ref{thm02}, the Ore extended calculus on
$\cF_G$ and that on the coinvariant subsheaf $\cO_M=\mathcal{F}_G^{\coi\!\: \cO_q(P)}$.
Thus, despite non uniqueness of the noncommutative differential
calculus, once defined the calculus on $\cO_q(G)$ the other calculi are canonically obtained.

We next proceed to extend our sheaf-theoretic treatment to the
notion of principal covariant calculus on a quantum principal bundle, which accounts for
compatibility of the calculus
$(\Upsilon,\mathrm{d})$ on the total sheaf $\mathcal{F}$, the calculus
$(\Upsilon_M,\mathrm{d}_M)$ on the base sheaf $\mathcal{O}_M$ and
a bicovariant calculus $(\Gamma_H,\mathrm{d}_H)$ on the structure Hopf algebra
$H$. 
For a principal covariant calculus we demand the sequence of stalks
\begin{equation}\label{zero}
    0\rightarrow\mathcal{F}_p\otimes_{(\mathcal{O}_M)^{}_p}(\Upsilon_M)^{}_p
    \rightarrow\Upsilon_p\longrightarrow
    \mathcal{F}_p\square_H
    \Gamma_H\rightarrow 0
  \end{equation}
to be exact for every $p\in M$, and the calculus on $H$ to be
bicovariant, cf. equation \eqref {eq109}.

\medskip

The rest of the article is devoted to providing three explicit
examples of this canonical construction. We study calculi on the quantum principal bundles
$\mathcal{O}_q(\mathrm{SL}_2(\mathbb{C}))$ and $\mathcal{O}_q(\mathrm{GL}_2(\mathbb{C}))$,
both over $\mathbf{P}^1(\mathbb{C})$. The first example is based on a
4D bicovariant
differential calculus on $\mathcal{O}_q(\mathrm{SL}_2(\mathbb{C}))$. The associated
sequence \eqref{zero} is well-defined but not exact.
Despite the base sheaf $\cO_{\mathbf{P}^1(\mathbb{C})}$ being commutative the
resulting calculus is noncommutative (the bimodule of one forms is
noncommutative). 

The second example
is based on a 3D calculus on $\mathcal{O}_q(\mathrm{SL}_2(\mathbb{C}))$ and the
associated sequence is exact but not a principal covariant sequence
since the induced calculus on the quantum parabolic subgroup
$\mathcal{O}_q(\mathrm{P})$ ($\mathrm{P}\subseteq \mathrm{SL}_2$ being
the Borel subgroup of upper triangular matrices) is not  bicovariant.

The third example has total space algebra
$\mathcal{O}_q(\mathrm{GL}_2(\mathbb{C}))$ with a 4D bicovariant
calculus. This induces a principal covariant calculus
on the quantum principal bundle
$\mathcal{F}_{\mathrm{GL}_q}$ over the base sheaf
$\cO_{\mathbf{P}^1(\mathbb{C})}$. Even though  the sheaf
$\mathcal{F}_{\mathrm{GL}_q}$ is locally trivial, i.e., on an open
cover $\{U_i\}$ of ${\mathbf{P}^1(\mathbb{C})}$ the algebras $\mathcal{F}_{\mathrm{GL}_q}(U_i)$ are isomorphic to smash product algebras, the associated smash product
calculi do not give a first order differential calculus on the sheaf
$\mathcal{F}_{\mathrm{GL}_q}$. 
This is an instance where, while the sheaf defining the quantum principal bundle is
locally trivial, the sheaf of one forms is not.

\section{Preliminary concepts}\label{prelim-sec}

In this section we establish our notation and provide the key
results we will need in the sequel. Though most of the material
of this section is known, 
we include it by completeness, so that the reader has readily available
the results in the form we need them later on.

\subsection{Notation on modules and comodules} 

Let $\Bbbk$ be a commutative unital ring (in later sections
specialized to $\mathbb{C}[q,q^{-1}]$ and to a field) and let $\otimes$
be the tensor product of $\Bbbk$-modules. 
Algebras over $\Bbbk$ will be associative and unital.
Let $H$ be a \textit{Hopf algebra} over $\Bbbk$. Throughout this paper
we denote the coproduct of $H$ by $\Delta$ and its counit by $\epsilon$.
Recall that those are algebra homomorphisms $\Delta\colon H\rightarrow H\otimes H$ and
$\epsilon\colon H\rightarrow\Bbbk$ such that
$(\Delta\otimes\mathrm{id}_H)\circ\Delta=(\mathrm{id}_H\otimes\Delta)\circ\Delta$
(coassociativity) and 
$(\epsilon\otimes\mathrm{id}_H)\circ\Delta=(\mathrm{id}_H\otimes\epsilon)\circ\Delta
=\mathrm{id}_H$
(counitality) hold.
We employ Sweedler's notation $\Delta(h)=h_1\otimes h_2$ for the coproduct of an element
$h\in H$, i.e.~we omit
summation symbols and summation indices. By the coassociativity we write
\begin{equation*}
    h_1\otimes h_2\otimes h_3
    :=(\Delta\otimes\mathrm{id}_H)(\Delta(h))
    =(\mathrm{id}_H\otimes\Delta)(\Delta(h))
\end{equation*}
for all $h\in H$ and similarly for higher coproducts. The antipode $S\colon H\rightarrow H$
of $H$ is an anti-bialgebra homomorphism, namely $S(hh')=S(h')S(h)$, $S(1)=1$, 
$\Delta(S(h))=S(h_2)\otimes S(h_1)$, $\epsilon\circ S=\epsilon$,
and furthermore we have $S(h_1)h_2=\epsilon(h)1=h_1S(h_2)$ for all $h,h'\in H$. 
We always assume that $H$ has {\it{invertible  antipode} } and denote its inverse by $\overline{S}\colon H\to H$. The latter is an anti-bialgebra
homomorphism such that $\overline{S}(h_2)h_1=\epsilon(h)1=h_2\overline{S}(h_1)$
for all $h\in H$.

A \textit{right $H$-comodule} is a $\Bbbk$-module $M$, together with a $\Bbbk$-linear map
$\Delta_R\colon M\rightarrow M\otimes H$
such that
\begin{equation}\label{eq03'}
    (\Delta_R\otimes\mathrm{id}_H)\circ\Delta_R
    =(\mathrm{id}_M\otimes\Delta)\circ\Delta_R
\end{equation}
and $(\mathrm{id}_M\otimes\epsilon)\circ\Delta_R=\mathrm{id}_M$,
while a \textit{left $H$-comodule} is a $\Bbbk$-module $M$ with a $\Bbbk$-linear map
$\Delta_L\colon M\rightarrow H\otimes M$ such that
\begin{equation}\label{eq04'}
    (\mathrm{id}_H\otimes\Delta_L)\circ\Delta_L
    =(\Delta\otimes\mathrm{id}_M)\circ\Delta_L
\end{equation}
and $(\epsilon\otimes\mathrm{id}_M)\circ\Delta_L=\mathrm{id}_M$.
The maps $\Delta_R$ and $\Delta_L$ are called right and left $H$-coaction,
respectively. We use the Sweedler's like notations
$\Delta_R(m)=m_0\otimes m_1$ and $\Delta_L(m)=m_{-1}\otimes m_0$,
where $m\in M$. In case $M$ is a right $H$-comodule, we denote the
$\Bbbk$-submodule of right $H$-coaction invariant elements,
simply called $H$-coinvariant elements,
by
\begin{equation*}
    M^{\mathrm{co}H}:=\{m\in M~|~\Delta_R(m)=m\otimes 1\}
\end{equation*}
Similarly, if $M$ is a left $H$-comodule, 
${}^{\mathrm{co}H}M:=\{m\in M~|~\Delta_L(m)=1\otimes m\}$ is the $\Bbbk$-submodule
of left $H$-coinvariant elements.
Accordingly to  (\ref{eq03'}) and (\ref{eq04'}), we further use the notations
\begin{equation*}
    m_0\otimes m_1\otimes m_2:=(\Delta_R\otimes\mathrm{id}_H)(\Delta_R(m))
    =(\mathrm{id}_M\otimes\Delta)(\Delta_R(m)),
\end{equation*}
$m_{-2}\otimes m_{-1}\otimes m_0
:=(\mathrm{id}_H\otimes\Delta_L)(\Delta_L(m))
=(\Delta\otimes\mathrm{id}_M)(\Delta_L(m))$ and similarly for higher coactions.
A right $H$-comodule $M$ which is also a left $H$-comodule is said to be an
\textit{$H$-bicomodule} if $\Delta_R$ and $\Delta_L$ are commuting coactions, i.e.,
\begin{equation*}
    (\Delta_L\otimes\mathrm{id}_H)\circ\Delta_R
    =(\mathrm{id}_H\otimes\Delta_R)\circ\Delta_L.
\end{equation*}
In Sweedler's like notation this commutativity  reads $m_{-1}\otimes
(m_0\otimes m_1)=(m_{-1}\otimes m_0)\otimes m_1$, and hence we simply
write $m_{-1}\otimes m_0\otimes m_1$.
If $M$ is an $H$-bicomodule then $M^{\mathrm{co}H}$ and $^{\mathrm{co}H}M$
are left and right $H$-subcomodules, respectively. A \textit{right $H$-comodule morphism} is a $\Bbbk$-linear map $\phi\colon M\rightarrow N$
between right $H$-comodules $(M,\Delta_M)$ and $(N,\Delta_N)$ such
that $\Delta_N\circ \phi =(\phi\otimes \id) \circ \Delta_M\,$, i.e., 
\begin{equation}\label{eq24}
    \phi(m)_0\otimes\phi(m)_1
    =\phi(m_0)\otimes m_1
\end{equation}
for all $m\in M$. A $\Bbbk$-linear map
satisfying (\ref{eq24}) is also called right $H$-colinear.
Similarly, left $H$-colinear and $H$-bicolinear maps are defined.

\subsection{Covariant bimodules}

Algebras in the category of comodules have a multiplication and unit that are
compatible with the coaction.
Since we are encountering several different coactions in the course of this
section we utilize the convention to denote the right $H$-coaction on a
right $H$-comodule algebra $A$ by $\delta_R\colon A\rightarrow A\otimes H$.
\begin{definition}
A \textit{right $H$-comodule algebra} is a right $H$-comodule $(A,\delta_R)$ together
with an associative product $\mu\colon A\otimes A\rightarrow A$ and a unit 
$\eta\colon\Bbbk\rightarrow A$, such that
\begin{equation*}
    \delta_R\circ\mu
    =(\mu\otimes\mu_H)\circ(\mathrm{id}_A\otimes\tau_{H,A}\otimes\mathrm{id}_H)
    \circ(\delta_R\otimes\delta_R)
\end{equation*}
and $\delta_R\circ\eta=\eta\otimes\eta_H$, where $\tau_{H,A}\colon H\otimes A
\rightarrow A\otimes H$, $h\otimes a\mapsto a\otimes h$ denotes the
flip of $\Bbbk$-modules and $(\mu_H,\eta_H)$ is the
algebra structure of $H$. Using the previously introduced short notation,
the compatibility conditions read
\begin{equation*}
    \delta_R(aa')
    =(aa')_0\otimes(aa')_1
    =a_0a'_0\otimes a_1a'_1
    =\delta_R(a)\delta_R(a')
\end{equation*}
for all $a,a'\in A$
and $\delta_R(1)=1_0\otimes 1_1=1\otimes 1_H$.
A \textit{right $H$-comodule algebra homomorphism} is a right $H$-comodule homomorphism which is
also an algebra homomorphism.
Similarly, one defines left $H$-comodule algebras
and $H$-bicomodule algebras and their morphisms.
\end{definition}
Fix a right $H$-comodule algebra $(A,\delta_R)$ in the following.
We now introduce $A$-bimodules in the category of right $H$-comodules.

\begin{definition}\label{def03}
An $A$-bimodule $M$ is called a \textit{right $H$-covariant $A$-bimodule} if
there is a right $H$-comodule action $\Delta_R\colon M\rightarrow M\otimes H$ on $M$,
such that
\begin{equation*}
    \Delta_R(a\cdot m\cdot a')
    =\delta_R(a)\Delta_R(m)\delta_R(a'),
\end{equation*}
or equivalently, in short notation, $(a\cdot m\cdot a')_0\otimes(a\cdot m\cdot a')_1
=a_0\cdot m_0\cdot a'_0\otimes a_1m_1a'_1$
for all $a,a'\in A$ and $m\in M$. A right \textit{$H$-covariant right (resp. left) $A$-module}
is a right (resp. left) $A$-module, where we only ask compatibility of a right
$H$-coaction with the right (resp. left) $A$-module structure.
Similarly, left $H$-covariant $A$-modules and $H$-bicovariant $A$-bimodules are defined,
if $A$ is a left $H$-comodule algebra or an $H$-bicomodule algebra, respectively.
\end{definition}

We are particularly interested in the case $A=H$, since $H$ is naturally
a right and a left $H$-comodule algebra with respect to the coactions given by
the coproduct.
\begin{definition}
A \textit{bicovariant $H$-bimodule} is an $H$-bimodule and an $H$-bicomodule $M$,
such that
\begin{equation*}
    \Delta_R(h\cdot m\cdot h')
    =\Delta(h)\Delta_R(m)\Delta(h')
    \text{ and }
    \Delta_L(h\cdot m\cdot h')
    =\Delta(h)\Delta_L(m)\Delta(h')
\end{equation*}
for all $h,h'\in H$ and $m\in M$.
\end{definition}
For $H$-covariant $H$-modules there is the following fundamental theorem
(c.f. \cite{mo}~Thm.~1.9.4, \cite{Schauenburg94}).
\begin{proposition}\label{fundthm}
For any right $H$-covariant right $H$-module $M$,
there is an isomorphism
\begin{equation}\label{eq104}
    M\to M^{\mathrm{co}H}\otimes H~,~~ m\mapsto m_0S(m_1)\otimes m_2~ ,
\end{equation}
of right $H$-covariant right $H$-modules.
If $M$ is a bicovariant $H$-bimodule, (\ref{eq104}) extends
to an isomorphism of bicovariant $H$-bimodules.

If $M$ is a left $H$-covariant left $H$-module,
there is an isomorphism
\begin{equation}\label{eq105}
    M\to H\otimes{}^{\mathrm{co}H}M~,~~ m\mapsto m_{-2}\otimes S(m_{-1})m_0~,
\end{equation}
of left $H$-covariant left $H$-modules.
If $M$ is a bicovariant $H$-bimodule, (\ref{eq105}) extends
to an isomorphism of bicovariant $H$-bimodules.
\end{proposition}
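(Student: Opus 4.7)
The plan is to exhibit an explicit inverse and then check the compatibilities by Sweedler computations. I would define $\Phi\colon M\to M^{\mathrm{co}H}\otimes H$ by $\Phi(m):=m_0\cdot S(m_1)\otimes m_2$, and propose as inverse $\Psi\colon M^{\mathrm{co}H}\otimes H\to M$, $\Psi(n\otimes h):=n\cdot h$, where the dot denotes the right $H$-action on $M$. The target $M^{\mathrm{co}H}\otimes H$ is regarded as a right $H$-covariant right $H$-module via multiplication and comultiplication on the second tensor factor.

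The first step is to verify that $m_0\cdot S(m_1)\in M^{\mathrm{co}H}$. Applying $\Delta_R$ and using the right covariance $\Delta_R(m\cdot h)=m_0\cdot h_1\otimes m_1h_2$ together with $\Delta(S(a))=S(a_2)\otimes S(a_1)$, the expression collapses via $S(a_1)a_2=\epsilon(a)1$ to $m_0\cdot S(m_1)\otimes 1$. Mutual inversion is then immediate: $\Psi(\Phi(m))=m_0\cdot(S(m_1)m_2)=\epsilon(m_1)m_0=m$, while for $n\in M^{\mathrm{co}H}$ coinvariance reduces $\Delta_R(n\cdot h)$ to $n\cdot h_1\otimes h_2$, so that $\Phi(\Psi(n\otimes h))=n\cdot h_1\cdot S(h_2)\otimes h_3=n\otimes h$.

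Next I would check equivariance of $\Phi$. Right $H$-linearity follows from
\begin{equation*}
\Phi(m\cdot h)=m_0h_1\cdot S(m_1h_2)\otimes m_2h_3=m_0h_1S(h_2)\cdot S(m_1)\otimes m_2h_3=m_0\cdot S(m_1)\otimes m_2h,
\end{equation*}
using the antipode axioms. Right $H$-colinearity amounts to the identity $\Delta_R\circ\Phi=(\mathrm{id}\otimes\Delta)\circ\Phi$, which is a one-line consequence of iterated coassociativity of $\Delta_R$ applied to the Sweedler indices of $m$. The left-covariant left $H$-module statement (\ref{eq105}) is handled symmetrically: the analogous map $m\mapsto m_{-2}\otimes S(m_{-1})m_0$ with inverse $h\otimes n\mapsto h\cdot n$ is treated verbatim by swapping the roles of the two tensor factors and the Sweedler labels.

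For the bicovariant extension, the first thing to pin down is the correct $H$-bimodule/$H$-bicomodule structure on $M^{\mathrm{co}H}\otimes H$ that makes $\Phi$ an isomorphism of such objects. Since left and right coactions commute, $\Delta_L$ restricts to a left $H$-coaction on $M^{\mathrm{co}H}$, and the $H$-bimodule structure induces on $M^{\mathrm{co}H}$ the left adjoint-type action $h\triangleright n:=h_1\cdot n\cdot S(h_2)$; combined with the regular left structures on the second tensor factor, the same Sweedler manipulations as above yield left $H$-linearity and $H$-colinearity of $\Phi$. I expect this last identification to be the main obstacle: the computations themselves are routine, but one must choose the bicovariant structure on $M^{\mathrm{co}H}\otimes H$ compatibly so that $\Phi$ is simultaneously $H$-bilinear and $H$-bicolinear; once the structures are fixed, the verification reduces to bookkeeping with Sweedler indices and the antipode identities.
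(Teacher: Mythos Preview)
Your argument is correct and is precisely the standard proof of the fundamental theorem of Hopf modules. The paper does not actually supply its own proof of this proposition: it states the result with references to \cite{mo}~Theorem~1.9.4 and \cite{Schauenburg94}, and only spells out, immediately after the statement, the bicovariant $H$-bimodule structure on $M^{\mathrm{co}H}\otimes H$ (diagonal coactions, right action on the second factor, and left action $h\cdot(\overline{m}\otimes h')=h_1\overline{m}S(h_2)\otimes h_3h'$).

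One small point of precision: your phrase ``combined with the regular left structures on the second tensor factor'' should be read as the \emph{diagonal} action $h\cdot(n\otimes h')=(h_1\triangleright n)\otimes h_2h'$ with $\triangleright$ the adjoint action you wrote down; unpacking this gives exactly the formula the paper records. You already flag this as the point requiring care, and once the structure is fixed your Sweedler computations go through verbatim.
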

Note that in (\ref{eq104}) the tensor product 
$M^{\mathrm{co}H}\otimes H$ is a right $H$-covariant right $H$-module with respect to the
diagonal coaction and the right $H$-action 
$(\overline{m}\otimes h)\cdot h':=\overline{m}\otimes(hh')$, where 
$\overline{m}\in M^{\mathrm{co}H}$ and $h,h'\in H$.
If $M$ is furthermore a bicovariant $H$-bimodule
we endow $M^{\mathrm{co}H}\otimes H$
with the diagonal left $H$-coaction and the left $H$-action 
$h\cdot(\overline{m}\otimes h'):=h_1\overline{m}S(h_2)\otimes h_3h'$.
Similarly in (\ref{eq105}),
$H\otimes{}^{\mathrm{co}H}M$ becomes a left $H$-covariant left $H$-module via the
diagonal coaction and the left $H$-action given by
$h\cdot(h'\otimes\overline{m}):=(hh')\otimes\overline{m}$, where
$h,h'\in H$ and $\overline{m}\in{}^{\mathrm{co}H}M$.
If $M$ is furthermore a bicovariant $H$-bimodule
we endow $H\otimes{}^{\mathrm{co}H}M$
with the diagonal right $H$-coaction and the right $H$-action 
$(h\otimes\overline{m})\cdot h':=hh'_1\otimes S(h'_2)\overline{m}h'_3$.

Using the inverse  $\overline{S}$ of the antipode $S$ there are analogous isomorphism of
right $H$-covariant left $H$-modules and left $H$-covariant right $H$-modules, respectively.
The fundamental theorem allows us to write any element $m$ of a bicovariant $H$-bimodule
as $m=a_im^i$, where $\{m^i\}_i\in I\subseteq{}^{\mathrm{co}H}M$ is a basis
of (left) coinvariant elements and $a_i$ are coefficients in $H$.
This is particularly useful when one is dealing with $H$-linear maps of bicovariant bimodules.
Then, it is sufficient to specify properties on coinvariant elements.

\subsection{Hopf--Galois extensions}\label{SectHopfGalois}

In this section $H$ denotes a Hopf algebra and $A$ a right $H$-comodule algebra
with right $H$-comodule action $\delta_R\colon A\rightarrow A\otimes H$.
In particular $H$ is a right $H$-comodule algebra with coaction $\Delta$.
\begin{definition}\label{DefHG}
The algebra extension $B:=A^{\mathrm{co}H}\subseteq A$ is said to be
\begin{enumerate}
\item[i.)] a \textit{Hopf--Galois extension}, if the $\Bbbk$-linear map
\begin{equation*}
    \chi:=(\mu\otimes\mathrm{id}_H)\circ(\mathrm{id}_A\otimes_B\delta_R)
    \colon A\otimes_BA\rightarrow A\otimes H~,~~a\otimes a'\mapsto\chi(a\otimes_Ba')=aa'_0\otimes a'_1
\end{equation*}
is a bijection, where $\mu\colon A\otimes_B A\rightarrow A$ is the
product of $A$ induced on the balanced tensor product $A\otimes_B A$.

\item[ii.)] a \textit{cleft extension}, if there is a $\Bbbk$-linear map $j\colon H\rightarrow A$,
the \textit{cleaving map}, such that
\begin{enumerate}
    \item[1.)] $j$ is right $H$-colinear, i.e.,
    $j(h)_0\otimes j(h)_1=j(h_1)\otimes h_2$ for all $h\in H$,
    
    \item[2.)] $j$ is convolution invertible, i.e., there is a $\Bbbk$-linear map
    $j^{-1}\colon H\rightarrow A$ such that
    $j(h_1)j^{-1}(h_2)=\epsilon(h)1=j^{-1}(h_1)j(h_2)$ for all $h\in H$,
    
    \item[3.)] $j$ respects the units, i.e., $j(1_H)=1$.
\end{enumerate}

\item[iii.)] a \textit{trivial extension}, if there is a right $H$-comodule algebra
homomorphism $j\colon H\rightarrow A$.
\end{enumerate}
\end{definition}
A trivial extension $A^{\mathrm{co}H}\subseteq A$ is automatically a cleft extension since an 
$H$-comodule algebra map $j\colon H\to A$ is right $H$-colinear and unital by assumption
and its convolution inverse is $j^{-1}=j\circ S$. This trivially
implies that the composition $j^{-1}\circ\overline{S}
\colon H\rightarrow A$ is right $H$-colinear. In the following lemma we prove that
this $H$-colinearity property holds for any cleft extension.
\begin{lemma}\label{lemma01}
The inverse of the cleaving map satisfies
\begin{equation}\label{eq11}
    \delta_R\circ j^{-1}
    =(j^{-1}\otimes S)\circ\tau_{H,H}\circ\Delta~,
\end{equation}
where $\tau_{H,H}$ is the flip.  In particular, $j^{-1}\circ \overline{S}$ is right $H$-colinear. 
\end{lemma}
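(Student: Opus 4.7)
The plan is to exhibit both sides of the claimed equality as convolution inverses of the same map in the convolution algebra $\Hom(H, A\otimes H)$, and then invoke uniqueness. Recall that for any algebra $R$, $\Hom(H,R)$ is an associative algebra under $(f\star g)(h)=f(h_1)g(h_2)$ with unit $\eta_R\circ\epsilon$. First, I would observe that since $\delta_R\colon A\to A\otimes H$ is an algebra homomorphism, it is compatible with convolution: for any $f,g\in\Hom(H,A)$ one has $(\delta_R\circ f)\star(\delta_R\circ g)=\delta_R\circ(f\star g)$. Applied to $f=j$ and $g=j^{-1}$, this immediately yields that $\delta_R\circ j^{-1}$ is a two-sided convolution inverse of $\delta_R\circ j$ in $\Hom(H,A\otimes H)$.

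Next, using the right $H$-colinearity of $j$, I would rewrite $(\delta_R\circ j)(h)=j(h_1)\otimes h_2$. The main computation is then to verify that the map $g\colon H\to A\otimes H$, $g(h):=j^{-1}(h_2)\otimes S(h_1)$, is also a convolution inverse of $\delta_R\circ j$. Expanding
\begin{equation}
((\delta_R\circ j)\star g)(h)=\bigl(j(h_1)\otimes h_2\bigr)\bigl(j^{-1}(h_4)\otimes S(h_3)\bigr)=j(h_1)j^{-1}(h_4)\otimes h_2 S(h_3),
\end{equation}
the antipode identity collapses $h_2 S(h_3)=\epsilon$ of the middle factor, reducing this to $j(h_1)j^{-1}(h_2)\otimes 1_H=\epsilon(h)1_A\otimes 1_H$. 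The computation of the opposite order $g\star(\delta_R\circ j)$ is similar, using instead that $j^{-1}$ is a left convolution inverse and that $S(h_2)h_3=\epsilon$ collapses the middle indices. By uniqueness of convolution inverses in $\Hom(H,A\otimes H)$, this forces $\delta_R(j^{-1}(h))=j^{-1}(h_2)\otimes S(h_1)$, which is precisely \eqref{eq11}.

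For the final statement, I would simply substitute $h\mapsto \overline{S}(h)$ into the identity just proved. Since $\overline{S}$ is an anti-coalgebra morphism, $\Delta(\overline{S}(h))=\overline{S}(h_2)\otimes\overline{S}(h_1)$, and therefore
\begin{equation}
\delta_R\bigl(j^{-1}(\overline{S}(h))\bigr)=j^{-1}(\overline{S}(h)_2)\otimes S(\overline{S}(h)_1)=j^{-1}(\overline{S}(h_1))\otimes S(\overline{S}(h_2))=(j^{-1}\circ\overline{S})(h_1)\otimes h_2,
\end{equation}
which is exactly the statement that $j^{-1}\circ\overline{S}$ is right $H$-colinear. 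The only delicate point in the whole argument is bookkeeping the Sweedler indices in the convolution product computation so as to apply the antipode axiom on the correct adjacent pair; there is no conceptual obstacle beyond this.
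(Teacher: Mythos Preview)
Your proof is correct and follows essentially the same route as the paper. The paper cites Montgomery's Lemma~7.2.6 for \eqref{eq11}, and your convolution-inverse argument in $\Hom(H,A\otimes H)$ is precisely the standard proof given there; your derivation of the right $H$-colinearity of $j^{-1}\circ\overline{S}$ is identical to the paper's.
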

\begin{proof}
Equation (\ref{eq11}) is proven in \cite{mo}~Lem.~7.2.6, 1). Then,
\begin{align*}
    \delta_R(j^{-1}(\overline{S}(h)))
    =j^{-1}(\overline{S}(h)_2)\otimes S(\overline{S}(h)_1)
    =j^{-1}(\overline{S}(h_1))\otimes S(\overline{S}(h_2))
    =j^{-1}(\overline{S}(h_1))\otimes h_2
\end{align*}
for all $h\in H$, where we used that $\overline{S}$ is an anti-bialgebra homomorphism.
\end{proof}
To complete the hierarchy of Definition~\ref{DefHG}, we note that cleft extensions
are in particular Hopf--Galois extensions. We shall frequently
  call them cleft Hopf--Galois extensions.
\begin{proposition}\label{prop04} $B:=A^{\mathrm{co}H}\subseteq A$ is a cleft
extension if and only if $B\subseteq A$ is a Hopf--Galois extension and
there is a left $B$-module and right $H$-comodule isomorphism $B\otimes H\cong A$.
\end{proposition}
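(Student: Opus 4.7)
The plan is to show the two implications separately, with the forward direction being a relatively direct computation and the reverse direction requiring us to extract a cleaving map from the structural data.

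For the forward direction, I assume $B\subseteq A$ is cleft with cleaving map $j\colon H\rightarrow A$ and convolution inverse $j^{-1}$. I define
$\phi\colon B\otimes H\rightarrow A$ by $\phi(b\otimes h):=b\,j(h)$. Left $B$-linearity is immediate, and right $H$-colinearity uses that $b\in B$ is coinvariant together with the colinearity of $j$: $\delta_R(b\,j(h))=b\,j(h_1)\otimes h_2$. For the inverse, I set $\psi(a):=a_0\,j^{-1}(a_1)\otimes a_2$. The target really lies in $B\otimes H$: using Lemma~\ref{lemma01} one checks that $a_0\,j^{-1}(a_1)$ is coinvariant. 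Then $\phi\circ\psi(a)=a_0\,j^{-1}(a_1)\,j(a_2)=a_0\,\epsilon(a_1)1=a$ by the convolution identity and counitality, and $\psi\circ\phi(b\otimes h)=b\,j(h_1)\,j^{-1}(h_2)\otimes h_3=b\otimes h$ similarly. Simultaneously, to show the Hopf-Galois property I define $\chi^{-1}(a\otimes h):=a\,j^{-1}(h_1)\otimes_B j(h_2)$; verifying $\chi\circ\chi^{-1}=\mathrm{id}$ and $\chi^{-1}\circ\chi=\mathrm{id}$ uses the same convolution identities together with the fact that $a'_0\,j^{-1}(a'_1)\in B$ allows one to slide factors across the $\otimes_B$.

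For the reverse direction I start from the Hopf-Galois property together with a left $B$-module and right $H$-comodule isomorphism $\phi\colon B\otimes H\rightarrow A$, and I aim to produce a cleaving map. First I set $e:=\phi(1\otimes 1_H)$ and observe that $e\in B$ since $1\otimes 1_H$ is $H$-coinvariant and $\phi$ is colinear. Restricting $\phi$ to $(B\otimes H)^{\mathrm{co}H}=B\otimes 1_H$ gives a left $B$-linear bijection $B\to B$, $b\mapsto b\,e$, from which it follows that $e$ is invertible in $B$. I then replace $\phi$ by $\tilde\phi(b\otimes h):=\phi(b\,e^{-1}\otimes h)$, which is again a left $B$-linear and right $H$-colinear isomorphism sending $1\otimes 1_H$ to $1$, and I define $j(h):=\tilde\phi(1\otimes h)$. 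By construction $j$ is right $H$-colinear and unital.

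To produce the convolution inverse $j^{-1}$, I use the Hopf-Galois bijection $\chi\colon A\otimes_B A\rightarrow A\otimes H$ and its translation map $h\mapsto h^{\langle 1\rangle}\otimes_B h^{\langle 2\rangle}:=\chi^{-1}(1\otimes h)$. Motivated by the forward calculation (where $\chi^{-1}(1\otimes h)=j^{-1}(h_1)\otimes_B j(h_2)$), I use the normal basis isomorphism $\tilde\phi$ to write $h^{\langle 2\rangle}$ uniquely in the form $\sum_k b_k\,j(g_k)$ and then set $j^{-1}(h):=\sum_k h^{\langle 1\rangle}b_k\,\epsilon(g_k)$; more invariantly, $j^{-1}$ is characterized by the identity $\chi^{-1}(1\otimes h)=j^{-1}(h_1)\otimes_B j(h_2)$ which lives in $A\otimes_B A\cong A\otimes H$ under the isomorphism induced by $\tilde\phi$. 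Colinearity of $j$ and $H$-comodule structure of $\chi$ then force the two convolution identities $j(h_1)\,j^{-1}(h_2)=\epsilon(h)1=j^{-1}(h_1)\,j(h_2)$ upon applying $\chi$ to both sides and invoking bijectivity.

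The main obstacle is the reverse direction, and specifically the construction of $j^{-1}$: one must simultaneously use the Hopf-Galois property (to obtain any candidate at all) and the normal-basis isomorphism (to make sense of extracting the ``$j^{-1}(h_1)$'' factor from the translation map). A secondary technicality is the rescaling of $\phi$ to make $j$ unital; this only works because one can prove $e=\phi(1\otimes 1_H)$ is a unit in $B$, which is not part of the hypothesis but a consequence of $\phi$ being a bijection restricted to the coinvariants.
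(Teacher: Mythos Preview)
Your forward direction is exactly the paper's: the map you call $\phi$ is the paper's $\theta$, with the same inverse $a\mapsto a_0j^{-1}(a_1)\otimes a_2$, and your $\chi^{-1}(a\otimes h)=aj^{-1}(h_1)\otimes_B j(h_2)$ is the standard candidate. The paper does not actually carry out the reverse implication but defers to \cite[Theorem~8.2.4]{mo}; your argument follows the Doi--Takeuchi proof recorded there, so in substance the approaches coincide.

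Two points in your reverse direction deserve tightening. First, the sentence ``write $h^{\langle 2\rangle}$ uniquely in the form $\sum_k b_k j(g_k)$'' is informal: $h^{\langle 2\rangle}$ by itself is not a well-defined element of $A$, only the tensor $h^{\langle 1\rangle}\otimes_B h^{\langle 2\rangle}\in A\otimes_B A$ is. The clean formulation is to define $j^{-1}$ as the composite
\[
H\xrightarrow{\ \tau\ }A\otimes_B A\xrightarrow{\ \mathrm{id}_A\otimes_B\tilde\phi^{-1}\ }A\otimes_B(B\otimes H)\cong A\otimes H\xrightarrow{\ \mathrm{id}_A\otimes\epsilon\ }A,
\]
which on elements reads $j^{-1}(h)=h^{\langle 1\rangle}\gamma(h^{\langle 2\rangle})$ with $\gamma:=(\mathrm{id}_B\otimes\epsilon)\circ\tilde\phi^{-1}$. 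Second, the claim that colinearity ``forces'' the convolution identities is correct but should be unpacked: one uses the translation-map identities $h^{\langle 1\rangle}h^{\langle 2\rangle}=\epsilon(h)1$, $a_0a_1^{\langle 1\rangle}\otimes_B a_1^{\langle 2\rangle}=1\otimes_B a$, and $h_1^{\langle 1\rangle}\otimes_B h_1^{\langle 2\rangle}\otimes h_2=h^{\langle 1\rangle}\otimes_B(h^{\langle 2\rangle})_0\otimes(h^{\langle 2\rangle})_1$, together with the relations $\gamma(a_0)j(a_1)=a$ and $\gamma(j(h))=\epsilon(h)1$ coming from $\tilde\phi\circ\tilde\phi^{-1}=\mathrm{id}$ and $\tilde\phi^{-1}\circ\tilde\phi=\mathrm{id}$. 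With those in hand both $j(h_1)j^{-1}(h_2)=\epsilon(h)1$ and $j^{-1}(h_1)j(h_2)=\epsilon(h)1$ are short computations. Your observation that $e=\phi(1\otimes 1_H)$ is a unit in $B$ (via the restriction of $\phi$ to coinvariants) is correct and is precisely the normalisation step used in \cite{mo}.
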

The proof of this proposition (see e.g. \cite{mo}~Thm. 8.2.4)
relies on the construction of the following left $B$-module and right $H$-comodule map
\begin{equation*}
    \theta\colon B\otimes H \to  A\,,\,\,\,\,
    b\otimes h\mapsto bj(h)
\end{equation*}
with inverse given by
\begin{equation*}
    \theta^{-1}\colon  A\to B \otimes H\,,\,\,\,\,
    a\mapsto a_0j^{-1}(a_1)\otimes a_2.
\end{equation*}
Note that $\theta^{-1}$ is well-defined, i.e. $a_0j^{-1}(a_1)\in B$, since
\begin{align*}
    \delta_R(a_0j^{-1}(a_1))
    =a_0j^{-1}(a_2)_0\otimes a_1j^{-1}(a_2)_1
    =a_0j^{-1}(a_3)\otimes a_1S(a_2)
    =a_0j^{-1}(a_2)\otimes\epsilon(a_1)1
    =a_0j^{-1}(a_1)\otimes 1,
\end{align*}
where we employed Lemma~\ref{lemma01} (see also \cite{mo}~Lem.~7.2.6,
2.).
\medskip

We conclude this section by recalling the notion of principal comodule algebra
from \cite{Dabrowski} and Schneider theorem for faithfully flat Hopf--Galois extensions.
\begin{definition}\label{PrinComAl}
A right $H$-comodule algebra $A$ is said to be a principal comodule algebra
if $B:=A^{\mathrm{co}H}\subseteq A$ is a Hopf--Galois extension and if
$A$ is right $H$-equivariantly projective as a left $B$-module.
The latter means that there exists a left $B$-linear and right $H$-colinear map
$s\colon A\rightarrow B\otimes A$ such that $m\circ s=\mathrm{id}_A$,
where $m\colon B\otimes A\rightarrow A$ denotes the restricted product of $A$.
\end{definition}

Any cleft extension is a principal comodule algebra with
section $s\colon A\rightarrow B\otimes A$, $a\mapsto
s(a)=a_0j^{-1}(a_1)\otimes j(a_2)$.
Let $B:=A^{\mathrm{co}H}\subseteq A$ be a Hopf--Galois extension.
If $\Bbbk$ is a field, and recalling that we always consider Hopf algebras $H$
with invertible antipode,
the right $H$-equivariant projectivity of $A$ is equivalent 
to
the existence of a strong connection,  it is also equivalent to
faithfully flatness of $A$ as a left $B$-module, see e.g. \cite{BRZ} Part VII, Thm. 6.16, 6.19, 6.20. 
For faithfully flat Hopf--Galois extensions we will later use the following
main result.

\begin{theorem}[Schneider, \cite{schn}~Thm.~1] \label{ta-schn}
  Let $B:=A^{\coi H}\subseteq A$ be a faithfully flat Hopf--Galois extension.  We have the
  equivalence of categories:
  $$
  \begin{array}{c}
    \Phi:{}_B\mathcal{M} \lra {}_A\mathcal{M}^H, \quad \Phi(M)=A\otimes_BM; \qquad
    \Psi:{}_A\mathcal{M}^H \lra {}_B\mathcal{M}, \qquad \Psi(N)=N^{\coi H}
\end{array}
  $$
  where ${}_A\mathcal{M}^H$ denotes the category of
  right $H$-covariant left $A$-modules and ${}_B\mathcal{M}$ the category of left $B$-modules.
 \end{theorem}

\subsubsection{Smash products}\label{sec:sp}

A \textit{left $H$-module algebra} $(B,\rhd)$ is a left $H$-module
structure $\rhd\colon H\otimes B\rightarrow B$ on an algebra $B$ such that
$h\rhd(bb')=(h_1\rhd b)(h_2\rhd b')$ and $h\rhd 1_B=\epsilon(h)1_B$ for all
$h\in H$ and $b,b'\in B$.

\begin{definition}
For a left $H$-module algebra $(B,\rhd)$
the \textit{smash product} $B\# H$ is defined as
the $\Bbbk$-module $B\otimes H$ endowed with
the multiplication
\begin{equation*}
    (b\otimes h)\cdot_{\#}(b'\otimes h')
    :=b(h_1\rhd b')\otimes h_2h'
\end{equation*}
for all $b,b'\in B$ and $h,h'\in H$.
\end{definition}
In the following we write
$b\# h$ instead of $b\otimes h$ if we want to view $b\otimes h$ as an element of
$B\# H$. Moreover, we set $(b\# h)(b'\# h')
:=(b\otimes h)\cdot_{\#}(b'\otimes h')$.
The algebra $B \# H$ is naturally endowed with the right $H$-comodule structure
\begin{equation*}
    B\# H\to  B\# H\otimes H~,~~b\# h\mapsto(b\# h_1)\otimes h_2~.
\end{equation*}
The compatibility of this coaction with the product $\cdot_\#$ is
easily checked and thus $B\# H$
is a right $H$-comodule algebra (see also \cite{mo}~Sect.~4.1).
The smash product algebra $ B\# H$ is the trivial Hopf--Galois
  extension $B=(B\# H)^{\coi H}\subseteq B\# H$ with cleaving map
defined by the algebra inclusion $H\to B\# H$. Vice versa trivial
Hopf--Galois extensions are isomorphic (as $H$-comodule algebras) to smashed products. 

For a trivial Hopf Galois extension $B=A^{co H}\subseteq A$, 
conjugation via the cleaving map $j\colon H\rightarrow A$ defines the
left $H$-module algebra action on $B$
\begin{equation*}
    h\rhd b:=j(h_1)bj^{-1}(h_2)~.
  \end{equation*}
  This is easily verified using that $j$ is an algebra map
and that its convolution inverse $j^{-1}=j\circ S$ is an anti-algebra map.
Furthermore, $j(h_1)bj^{-1}(h_2)\in B$ follows from the right $H$-colinearity
of $j$ and Lemma~\ref{lemma01}. The compatibility of $\rhd$ with the algebra
structure holds because $j^{-1}$ is the convolution inverse of $j$ and $j$ is
unital.
\begin{proposition}\label{prop05}
If $B:=A^{\mathrm{co}H}\subseteq A$ is a trivial extension the map
$\theta\colon B\# H\rightarrow A$, $\theta(b\# h):=bj(h)$  is an
isomorphism of right $H$-comodule algebras.
The inverse is
$\theta^{-1}(a)=a_0j^{-1}(a_1)\# a_2$.
\end{proposition}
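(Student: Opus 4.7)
The plan is to reduce the proposition to two essentially independent verifications: (a) that $\theta_\#$ is, as a $\Bbbk$-linear map between right $H$-comodules, the same bijection $\theta$ constructed in Proposition~\ref{prop04} and its successor, so that bijectivity, the explicit inverse formula, and right $H$-colinearity are already established; and (b) that, once we install the smash product multiplication on $B\otimes H$, the map $\theta_\#$ becomes an algebra homomorphism. The formula for $\theta_\#^{-1}$ is the very map $\theta^{-1}$ constructed in the previous section, so nothing new is needed there; one just has to note that the target $B\# H$ uses the smash coaction $b\# h\mapsto (b\# h_1)\otimes h_2$, which is literally the diagonal right $H$-coaction on $B\otimes H$ that already appeared, so colinearity is inherited.

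First I would check unitality: $\theta_\#(1_B\#1_H)=1_B\,j(1_H)=1_A$ by the assumption that the trivial cleaving $j$ is unital. Then I would carry out the multiplicativity computation, which is the only place where the trivial (rather than merely cleft) hypothesis is used. Using $h\rhd b':=j(h_1)b'j^{-1}(h_2)$ together with the fact that for a trivial extension $j$ is an algebra map and $j^{-1}=j\circ S$, one computes
\begin{equation*}
\theta_\#\bigl((b\#h)\cdot_\#(b'\#h')\bigr)
=\theta_\#\bigl(b(h_1\rhd b')\#h_2 h'\bigr)
=b\,j(h_1)b'j^{-1}(h_2)j(h_3)j(h')
=b\,j(h)b'j(h')
=\theta_\#(b\#h)\,\theta_\#(b'\#h'),
\end{equation*}
where the collapse uses $j^{-1}(h_2)j(h_3)=\epsilon(h_2)1$ and $j(h_1)\epsilon(h_2)=j(h)$, plus multiplicativity of $j$ in the last step.

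Finally, since a right $H$-colinear unital algebra homomorphism between right $H$-comodule algebras is by definition a right $H$-comodule algebra homomorphism, combining (a) and (b) yields that $\theta_\#$ is an isomorphism of right $H$-comodule algebras, with inverse $\theta_\#^{-1}(a)=a_0\,j^{-1}(a_1)\#a_2$ inherited from Proposition~\ref{prop04}. The only genuine obstacle is the cancellation in the multiplicativity check, and this is precisely where the trivial-extension hypothesis enters: for a general cleft extension $j$ is not multiplicative and one would be forced into a crossed product construction rather than the plain smash product, as the authors explicitly note before the statement.
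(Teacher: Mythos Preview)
Your proposal is correct and follows essentially the same approach as the paper: reduce to Proposition~\ref{prop04} for bijectivity, the inverse formula, and $H$-colinearity, then verify unitality and multiplicativity of $\theta_\#$ via the same computation $b(h_1\rhd b')j(h_2h')=bj(h_1)b'j^{-1}(h_2)j(h_3)j(h')=bj(h)b'j(h')$. Your added remarks on why the smash coaction agrees with the diagonal one and on the role of the trivial-extension hypothesis are accurate elaborations but do not change the argument.
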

\begin{proof}
In Proposition~\ref{prop04} it has been proven that $\theta$ is an
isomorphism of right $H$-comodules. It remains to prove that $\theta$ is
an algebra homomorphism. Trivially we have $\theta(1_B\# 1_H)=1_A$ and also
\begin{align*}
    \theta((b\# h)(b'\# h'))
    &=\theta(b(h_1\rhd b')\# h_2h')
    =b(h_1\rhd b')j(h_2h')
    =bj(h_1)b'j^{-1}(h_2)j(h_3)j(h')\\
    &=bj(h)b'j(h')
    =\theta(b\# h)\theta(b'\# h'),
\end{align*}
which concludes the proof.
\end{proof}

\section{Covariant differential calculi on Hopf--Galois extensions}\label{diff-sec}

We begin this section with the definition of the category of noncommutative differential calculi
and discuss pullback and quotient calculi. 
We then specialize to covariant differential calculi and
bicovariant calculi, building on previous works of \cite{herm} and \cite{wor}.
Examples of (bi)covariant calculi on the quantum groups $\cO_q(\mathrm{SL}_2(\mathbb{C}))$
and $\cO_q(\mathrm{GL}_2(\mathbb{C}))$ are recalled and the induced calculi on
their parabolic quantum subgroups are presented. 
We next revisit a result 
of Pflaum and Schauenburg  \cite{pflaum2}, where an $H$-module calculus and a
bicovariant calculus merge to the \textit{smash product calculus}: a
covariant calculus on the smash product algebra. This latter is a
trivial principal bundle and the smash product calculus is a 
FODC on it.

In Section \ref{pc-sec}
we show that for right $H$-covariant FODCi on principal comodule
algebras base forms are the intersection of horizontal and coinvariant
forms.   Furthermore, when the injection of horizontal forms into total
space forms is completed in an exact sequence with vertical forms,
we obtain a graded Hopf--Galois extension with compatible
differentials  on the total space algebra and on the quantum structure
group.

\subsection{Noncommutative differential calculi}

In this section we give the definition and some results on
noncommutative differential calculi. Though this material
is well established in the context of bicovariant calculi on
Hopf algebras, since we take a slightly more general point of view,
we prefer to review the main points. We refer the reader to
 \cite{wor}, \cite[Chpt. 12]{KSbook}, \cite[Chpt. 2]{bm} for more details.

We give the definition of a differential calculus.
\begin{definition}\label{def01}
  A \textit{first order differential calculus} (FODC) on a (noncommutative) algebra $A$ is a couple
  $(\Gamma,\mathrm{d})$, where
\begin{enumerate}
\item[i.)] $\Gamma$ is an $A$-bimodule,

\item[ii.)] $\mathrm{d}\colon A\rightarrow\Gamma$ is a $\Bbbk$-linear map which satisfies the
Leibniz rule $\mathrm{d}(ab)=(\mathrm{d}a)b+a\mathrm{d}b$ for all $a,b\in A$,

\item[iii.)]
  $\Gamma=A\mathrm{d}A:=\mathrm{span}_\Bbbk\{a\mathrm{d}b~|~a,b\in
  A\}$.
  \end{enumerate}
  We say that $\Gamma$ is generated (as a left
$A$-module) by exact forms and refer to iii.) as the {\it{surjectivity
property}} of the FODC.
A morphism between a FODC $(\Gamma,\mathrm{d})$ on $A$ and a FODC $(\Gamma',\mathrm{d}')$
on another algebra $A'$ is a couple $(\phi,\Phi)$, where 
$\phi\colon A\rightarrow A'$ is an algebra homomorphism and
$\Phi\colon\Gamma\rightarrow\Gamma'$ is a $\Bbbk$-linear map such that
\begin{equation*}
    \Phi(a\cdot\omega\cdot b)
    =\phi(a)\cdot'\Phi(\omega)\cdot'\phi(b)
  \end{equation*}
  and
  \begin{equation*}
    \Phi\circ\mathrm{d}=\mathrm{d}'\circ\phi
\end{equation*}
for all $a,b\in A, \omega\in \Gamma$. 
Two FODCi are called \textit{equivalent} if there is an isomorphism
of FODCi between them. 
\end{definition}
The classical example of a FODC comes from differential geometry. 
Given a smooth manifold $M$, the $C^\infty(M)$-bimodule $\Omega^1(M)$ of
differential $1$-forms on $M$ together with the de Rham differential is a FODC on
$C^\infty(M)$.
On any algebra $A$ there is the so-called 
\textit{universal FODC} $(\Gamma_u,\mathrm{d}_u)$, where $\Gamma_u:=\ker\mu_A\subseteq A\otimes A$ is
the kernel of the multiplication $\mu_A\colon A\otimes A\rightarrow A$ and
$\mathrm{d}_ua:=1\otimes a-a\otimes 1$ for all $a\in A$. The left and right $A$-module
action on $\Gamma_u$ is given by the multiplication on the first and the
second tensor factor, respectively.  This FODC is universal in the sense that every FODC on
$A$ is isomorphic to a quotient of $(\Gamma_u,\mathrm{d}_u)$.
(c.f. e.g. \cite{wor}).

It is well-known that the tensor product $A\otimes A'$ of two
algebras $A$ and $A'$ is an algebra with associative product $(a\otimes a')(b\otimes b')
=ab\otimes a'b'$ and unit $1_A\otimes 1_{A'}$. This construction extends to the level
of FODCi (see also \cite{pflaum2} Thm.~2.2).

\begin{proposition}\label{prop01}
Given a FODC $(\Gamma,\mathrm{d})$ on $A$ and a FODC $(\Gamma',\mathrm{d}')$ on
$A'$, there is a FODC $(\Gamma_{A\otimes A'},\mathrm{d}_{A\otimes A'})$ on
$A\otimes A'$, where $\Gamma_{A\otimes A'}=\Gamma\otimes A'\oplus A\otimes\Gamma'$ and
\begin{equation}\label{eq01}
    \mathrm{d}_{A\otimes A'}\colon A\otimes A'\to \Gamma_{A\otimes A'}~,~~a\otimes a'\mapsto
    \mathrm{d}a\otimes a'+a\otimes\mathrm{d}'a'~.
\end{equation}
The $A\otimes A'$-bimodule structure on $\Gamma_{A\otimes A'}$ is
\begin{equation}\label{eq02}
    (a\otimes a')\cdot(\omega\otimes b'+b\otimes\omega')\cdot(c\otimes c')
    =a\omega c\otimes a'b'c'+abc\otimes a'\omega'c',
\end{equation}
where $a,b,c\in A$, $a',b',c'\in A'$, $\omega\in\Gamma$ and $\omega'\in\Gamma'$.
This construction is associative, i.e. $(\Gamma_{(A\otimes A')\otimes A''},
\mathrm{d}_{(A\otimes A')\otimes A''})=(\Gamma_{A\otimes(A'\otimes A'')},
\mathrm{d}_{A\otimes(A'\otimes A'')})$ for another algebra $A''$.
\end{proposition}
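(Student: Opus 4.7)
The plan is to verify in order the three defining properties of a FODC listed in Definition~\ref{def01} for the candidate $(\Gamma_{A\otimes A'},\mathrm{d}_{A\otimes A'})$, and then dispose of the associativity claim.

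First I would check that $\Gamma_{A\otimes A'}=\Gamma\otimes A'\oplus A\otimes\Gamma'$ is well defined as an $A\otimes A'$-bimodule with the action (\ref{eq02}). The natural idea is to view $\Gamma\otimes A'$ and $A\otimes\Gamma'$ separately: on $\Gamma\otimes A'$ the left and right $A$-bimodule structure of $\Gamma$ together with the algebra structure of $A'$ give a canonical $A\otimes A'$-bimodule structure, and analogously for $A\otimes\Gamma'$. The direct sum inherits an $A\otimes A'$-bimodule structure, and one checks that this matches (\ref{eq02}) term by term.

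Second, I would verify the Leibniz rule for $\mathrm{d}_{A\otimes A'}$. Given $a\otimes a',b\otimes b'\in A\otimes A'$, expanding
\begin{equation*}
\mathrm{d}_{A\otimes A'}\bigl((a\otimes a')(b\otimes b')\bigr)=\mathrm{d}(ab)\otimes a'b'+ab\otimes\mathrm{d}'(a'b')
\end{equation*}
via the Leibniz rules for $\mathrm{d}$ and $\mathrm{d}'$ produces four terms which reassemble into
$\mathrm{d}_{A\otimes A'}(a\otimes a')\cdot(b\otimes b')+(a\otimes a')\cdot\mathrm{d}_{A\otimes A'}(b\otimes b')$
under the bimodule action (\ref{eq02}). $\Bbbk$-linearity is immediate from the linearity of $\mathrm{d}$ and $\mathrm{d}'$.

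Third, I would show the generation property $\Gamma_{A\otimes A'}=(A\otimes A')\,\mathrm{d}_{A\otimes A'}(A\otimes A')$. The key observation is that $\mathrm{d}_{A\otimes A'}(a\otimes 1)=\mathrm{d}a\otimes 1$ and $\mathrm{d}_{A\otimes A'}(1\otimes a')=1\otimes\mathrm{d}'a'$, so
\begin{equation*}
(c\otimes c')\cdot\mathrm{d}_{A\otimes A'}(a\otimes 1)\cdot(b\otimes b')=c\,\mathrm{d}a\,b\otimes c'b',\quad(c\otimes c')\cdot\mathrm{d}_{A\otimes A'}(1\otimes a')\cdot(b\otimes b')=cb\otimes c'\,\mathrm{d}'a'\,b'.
\end{equation*}
Using $\Gamma=A\,\mathrm{d}A$ and $\Gamma'=A'\mathrm{d}'A'$, we span $\Gamma\otimes A'$ and $A\otimes\Gamma'$ separately, hence their direct sum.

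Finally, associativity of the construction is pure bookkeeping: both $\Gamma_{(A\otimes A')\otimes A''}$ and $\Gamma_{A\otimes(A'\otimes A'')}$ compute to the threefold direct sum $\Gamma\otimes A'\otimes A''\oplus A\otimes\Gamma'\otimes A''\oplus A\otimes A'\otimes\Gamma''$ with the evident componentwise bimodule action, and the differential on $a\otimes a'\otimes a''$ is $\mathrm{d}a\otimes a'\otimes a''+a\otimes\mathrm{d}'a'\otimes a''+a\otimes a'\otimes\mathrm{d}''a''$ in both orders. No step is genuinely hard; the only mild obstacle is the bookkeeping in the Leibniz verification, ensuring that the four cross-terms distribute correctly between the two summands $\Gamma\otimes A'$ and $A\otimes\Gamma'$ of $\Gamma_{A\otimes A'}$.
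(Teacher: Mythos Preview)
Your proposal is correct and follows essentially the same approach as the paper's proof: both verify the bimodule structure, the Leibniz rule by expanding into four terms, the generation property, and associativity via the threefold direct sum. Your generation argument using $\mathrm{d}_{A\otimes A'}(a\otimes 1)$ and $\mathrm{d}_{A\otimes A'}(1\otimes a')$ to isolate the two summands is a slightly cleaner variant of the paper's set-level computation $(A\otimes A')\cdot(\mathrm{d}A\otimes A'\oplus A\otimes\mathrm{d}'A')=A\mathrm{d}A\otimes A'\oplus A\otimes A'\mathrm{d}'A'$, but the substance is identical.
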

\begin{proof}
Clearly, (\ref{eq02}) defines an $A\otimes A'$-bimodule structure and (\ref{eq01})  a
$\Bbbk$-linear map. This latter satisfies the Leibniz rule
\begin{align*}
    \mathrm{d}_{A\otimes A'}((a\otimes a')(b\otimes b'))
    &=\mathrm{d}_{A\otimes A'}(ab\otimes a'b')\\
    &=\mathrm{d}(ab)\otimes a'b'+ab\otimes\mathrm{d}'(a'b')\\
    &=(\mathrm{d}a)b\otimes a'b'+a\mathrm{d}b\otimes a'b'
    +ab\otimes(\mathrm{d}'a')b'+ab\otimes a'\mathrm{d}'b'\\
    &=(\mathrm{d}a\otimes a'+a\otimes\mathrm{d}'a')\cdot(b\otimes b')
    +(a\otimes a')\cdot(\mathrm{d}b\otimes b'+b\otimes\mathrm{d}'b')\\
    &=\mathrm{d}_{A\otimes A'}(a\otimes a')\cdot(b\otimes b')
    +(a\otimes a')\cdot\mathrm{d}_{A\otimes A'}(b\otimes b')
\end{align*}
for all $a,b\in A$ and $a',b'\in A'$. Furthermore, $\Gamma_{A\otimes A'}$ is generated
by $A\otimes A'$ and $\mathrm{d}_{A\otimes A'}$, since
\begin{align*}
    (A\otimes A')\cdot\mathrm{d}_{A\otimes A'}(A\otimes A')
    &=(A\otimes A')\cdot(\mathrm{d}A\otimes A'\oplus A\otimes\mathrm{d}'A')\\
    &=A\mathrm{d}A\otimes A'\oplus A\otimes A'\mathrm{d}'A'\\
    &=\Gamma\otimes A'\oplus A\otimes\Gamma'\\
    &=\Gamma_{A\otimes A'}
\end{align*}
as an equality of sets. Finally, associativity follows from the
equality of $A\otimes A'\otimes A''$-bimodules
\begin{align*}
    \Gamma_{(A\otimes A')\otimes A''}
    &=\Gamma_{A\otimes A'}\otimes A''
    \oplus(A\otimes A')\otimes\Gamma''\\
    &=(\Gamma\otimes A'\oplus A\otimes\Gamma')\otimes A''
    \, \oplus \, A\otimes A'\otimes\Gamma''\\
    &=\Gamma\otimes A'\otimes A''
  \,  \oplus\, A\otimes \Gamma'\otimes A''\,\oplus \,A\otimes A'\otimes\Gamma''\\
    &=\Gamma\otimes(A'\otimes A'')
    \oplus A\otimes\Gamma_{A'\otimes A''}\\
    &=\Gamma_{A\otimes(A'\otimes A'')}
\end{align*}
and from $\mathrm{d}_{(A\oplus A')\oplus A''}= \mathrm{d}_{A\oplus( A'\oplus A'')}$. 
\end{proof}

\begin{remark}\label{rem01}
In other words, Proposition~\ref{prop01} proves that FODCi form a monoidal category.
The monoidal product is
  $$  (\Gamma,\mathrm{d},A)\otimes(\Gamma',\mathrm{d}',A')
    :=(\Gamma_{A\otimes A'},\mathrm{d}_{A\otimes A'},A\otimes A')
    $$
 and the monoidal unit is the trivial calculus $(\Gamma=\{0\},\mathrm{d}=0)$ on the
algebra $A=\Bbbk$.
\end{remark}

Given a FODC on $A$ we can induce a FODC on any subalgebra and quotient algebra of $A$.
For a surjective algebra map $\pi\colon A\rightarrow A'$,
we identify $A'$ with $A/I$ where $I:=\ker\pi$ and denote by $[a]:=a+I$
the class of  $a\in A$.
\begin{proposition}\label{prop02}
Let $(\Gamma,\mathrm{d})$ be a FODC on $A$.
\begin{enumerate}
\item[i.)] An algebra map $\iota\colon A'\rightarrow A$ induces a FODC
$(\Gamma_\iota,\mathrm{d}_\iota)$ on $A'$, where
\begin{equation}\label{eq119}
    \Gamma_\iota:=\iota(A')\mathrm{d}\iota(A')\subseteq\Gamma
\end{equation}
and $\mathrm{d}_\iota:=\mathrm{d}\circ \iota \colon
A'\to\Gamma_\iota$, $ a'\mapsto\mathrm{d}\iota(a')$.

\item[ii.)] A surjective algebra map $\pi\colon A\rightarrow A'$ induces a FODC
$(\Gamma_\pi,\mathrm{d}_\pi)$ on $A'$, where the $A'$-bimodule 
\begin{equation*}
    \Gamma_\pi:=\Gamma/\Gamma_I
\end{equation*}
is the quotient with $\Gamma_I:=I\mathrm{d}A+A\mathrm{d}I$, where $I:=\ker\pi\subseteq A$ and
$\mathrm{d}_\pi\colon A'\to\Gamma_\pi, \pi(a)\mapsto [\mathrm{d}a]$.

\item[iii.)] If the exact sequence of algebras $0\to \ker\pi\to
  A\stackrel{\pi}\rightarrow A'\to 0$ splits via $\iota\colon
  A'\rightarrow A$, so that  $\pi \circ\iota=\mathrm{id}_{A'}$,
 then $(\Gamma_\iota,\mathrm{d}_\iota)$
is equivalent to $(\Gamma_\pi,\mathrm{d}_\pi)$ with isomorphism $(\phi,\Phi)$ given by

\begin{equation}\label{eq121}
\phi=\mathrm{id}_{A'}~~,~~~~    \Phi\colon\Gamma_\iota\to \Gamma_\pi\,,~\omega\mapsto
    [\omega]~.
\end{equation}
\end{enumerate}
\end{proposition}
\begin{proof}
i.) The $\Bbbk$-submodule $\Gamma_\iota$ in (\ref{eq119}) is structured as an $A'$-bimodule via
\begin{equation}\label{eq120'}
    a'\cdot\omega\cdot b':=\iota(a')\omega\iota(b')
\end{equation}
for all $a',b'\in A'$ and $\omega\in\Gamma_\iota\subseteq\Gamma$. To see this, we first
verify that the maps defined in  (\ref{eq120'}) close in $\Gamma_\iota$. By definition,
any $\omega\in\Gamma_\iota$ is a finite sum $\omega=\iota(a'^i)\mathrm{d}\iota(b'_i)$,
for $a'^i,b'_i\in A'$. Then, for all $a',b'\in A'$,
\begin{align*}
    a'\cdot\omega\cdot b'
    &=\iota(a')\iota(a'^i)(\mathrm{d}\iota(b'_i))\iota(b')\\
    &=\iota(a'a'^i)(\mathrm{d}(\iota(b'_i)\iota(b'))-\iota(b'_i)\mathrm{d}\iota(b'))\\
    &=\iota(a'a'^i)\mathrm{d}\iota(b'_ib')-\iota(a'a'^ib'_i)\mathrm{d}\iota(b')
    \in\Gamma_\iota,
\end{align*}
where we used that $\iota$ is an algebra map. The maps in (\ref{eq120'}) are
left and right $A'$-actions because $\iota$ is an algebra map. They
are trivially commuting. Next, the $\Bbbk$-linear map
$\mathrm{d}_\iota$  satisfies the Leibniz rule
\begin{align*}
    \mathrm{d}_\iota(a'b')
    =\mathrm{d}\iota(a'b')
    =\mathrm{d}(\iota(a')\iota(b'))
    =(\mathrm{d}\iota(a'))\iota(b')+\iota(a')\mathrm{d}\iota(b')
    =(\mathrm{d}_\iota a')\cdot b'+a'\cdot\mathrm{d}_\iota b'
\end{align*}
for all $a',b'\in A'$. Since by definition of
$\Gamma_\iota$ any $\omega\in\Gamma_\iota$ is of the form
$\omega=\iota(a'^i)\mathrm{d}\iota(b'_i)$ for some $a'^i,b'_i\in A'$
(finite sum understood), we have
$\omega=\iota(a'^i)\mathrm{d}\iota(b'_i)=a'^i\cdot\mathrm{d}_\iota b'_i$, proving that
$(\Gamma_\iota,\mathrm{d}_\iota)$ is a FODC on $A'$.

ii.) Since $I=\ker\pi\subseteq A$ is an ideal, by the
Leibniz rule of $\mathrm{d}$ it immediately follows that $\Gamma_I
=I\mathrm{d}A+A\mathrm{d}I\subseteq\Gamma$ is an $A$-subbimodule. Hence,
$\Gamma/\Gamma_I$ is an $A$-bimodule. Since $I\cdot\Gamma\subseteq
\Gamma_I$, $\Gamma\cdot I\subseteq
\Gamma_I$, the $A$-actions on $\Gamma$ descend to  $A'=A/I$-actions on $\Gamma/\Gamma_I$,
\begin{equation*}
    \pi(a)\cdot[\omega]\cdot\pi(b)
    :=[a\omega b]
\end{equation*}
for all $a,b\in A$ and $\omega\in\Gamma$.
It follows that the map $\mathrm{d}$ induced on the quotient,
$\mathrm{d}_\pi\colon A'\to \Gamma_\pi, \pi(a)\mapsto[\mathrm{d}a]$,
is $\Bbbk$-linear and satisfies the Leibniz rule:
\begin{align*}
    \mathrm{d}_\pi(\pi(a)\pi(b))
    &=\mathrm{d}_\pi(\pi(ab))
    =[\mathrm{d}(ab)]
    =[(\mathrm{d}a)b+a\mathrm{d}b]\\
    &=[\mathrm{d}a]\cdot\pi(b)+\pi(a)\cdot[\mathrm{d}b]
    =(\mathrm{d}_\pi\pi(a))\cdot\pi(b)+\pi(a)\cdot\mathrm{d}_\pi(\pi(b))
\end{align*}
for all $a,b\in A$. Finally,  since
any $\omega\in\Gamma$ is of the form  $\omega=a^i\mathrm{d}b_i$ (sum understood)
we have
$[\omega]=[a^i\mathrm{d}b_i]=\pi(a^i)\cdot\mathrm{d}_\pi(\pi(b_i))$ so
that  $\Gamma_\pi= A'\mathrm{d}_\pi A'$. 
We conclude that $(\Gamma_\pi,\mathrm{d}_\pi)$ is a FODC on $A'$.

iii.) According to i.) and ii.) $(\Gamma_\iota,\mathrm{d}_\iota)$ and
$(\Gamma_\pi,\mathrm{d}_\pi)$ are FODCi on $A'$. The splitting
$\iota:A'\to A$ of the
exact sequence $0\to \ker\pi\to
  A\stackrel{\pi}\rightarrow A'\to 0$ implies the direct sum of $A'$-bimodules
$A=\iota(A')\oplus I$ where the $A'$-action is via the algebra
embedding $A'\to \iota(A')\subseteq A$. This induces the direct sum of
$A'$-bimodules
$$\Gamma=A\mathrm{d} A=\iota(A')\mathrm{d}\iota(A')\oplus
\big(I\mathrm{d}A+A\mathrm{d}I\big)=\Gamma_\iota\oplus\Gamma_I~.$$
Considering the quotient with respect to $\Gamma_I$ gives the isomorphism
$\Gamma_\iota\to \Gamma/\Gamma_I$, $\omega\mapsto
\pi(\omega)$, which equals the $\Phi$ morphism defined in 
(\ref{eq121}).
We are left to prove the compatibility of $(\phi=\mathrm{id}_{A'},\Phi)$ with the exterior derivatives, for all $a'\in A'$
$$\Phi( \mathrm{d}_\iota a')= \Phi( \mathrm{d} \iota (a'))=[ \mathrm{d} \iota (a')]=\mathrm{d}_\pi\pi(\iota(a'))= \mathrm{d}_\pi a'=\mathrm{d}_\pi \phi(a')~.$$
\end{proof}
We call $(\Gamma_\iota,\mathrm{d}_\iota)$ the {\it pullback calculus}, while we call
$(\Gamma_\pi,\mathrm{d}_\pi)$ the {\it quotient calculus}. 
The above proposition provides a useful tool to produce examples of
noncommutative differential calculi. In the context of Drinfel'd twist
deformation quantization it has been employed to describe noncommutative
differential geometry on submanifolds of $\mathbb{R}^n$ given by
generators and relations \cite{FioreWeber}.
More abstractly, the braided Cartan calculus of a braided commutative
algebra with triangular Hopf algebra symmetry is related to the braided
Cartan calculus on a submanifold algebra in the above sense \cite{Weber}.

\subsection{Covariant calculi on comodule algebras}

In this section we discuss the theory for covariant calculi
on comodule algebras, following the perspectives of
\cite{herm,wor} and \cite{pflaum2}.
Let $H$ be a Hopf algebra.
\begin{definition}\label{rHc}
\begin{enumerate}
\item[i.)] A FODC $(\Gamma,\mathrm{d})$ on a right $H$-comodule algebra $(A,\delta_R)$
is said to be \textit{right $H$-covariant} if 
\begin{equation}\label{eq08}
    a\mathrm{d}b\mapsto a_0\mathrm{d}b_0\otimes a_1b_1,
\end{equation}
for all $a,b\in A$,
extends to a well-defined $\Bbbk$-linear map 
$\Gamma\rightarrow\Gamma\otimes H$. 

\item[ii.)] A FODC $(\Gamma,\mathrm{d})$ on a left $H$-comodule algebra $(A,\delta_L)$
is said to be \textit{left $H$-covariant} if 
\begin{equation}\label{eq08'}
    a\mathrm{d}b\mapsto a_{-1}ab_{-1}\otimes a_0\mathrm{d}b_0,
\end{equation}
for all $a,b\in A$,
extends to a well-defined $\Bbbk$-linear map 
$\Gamma\rightarrow H\otimes\Gamma$. 

\item[iii.)] A FODC $(\Gamma,\mathrm{d})$ on an $H$-bicomodule algebra $(A,\delta_R,\delta_L)$
is said to be \textit{$H$-bicovariant} if 
it is right and left $H$-covariant.
\end{enumerate}
\end{definition}

If $A=H$ is an $H$-comodule algebra with respect to the coproduct
we call an $H$-covariant FODC simply covariant. We recall the
following proposition.

\begin{proposition}[\cite{wor}]\label{lemma03}
\begin{enumerate}
\item[i.)] A FODC $(\Gamma,\mathrm{d})$ on a right $H$-comodule algebra
$(A,\delta_R)$ is right $H$-covariant if and only if $(\Gamma,\Delta_R)$ is a 
right $H$-covariant $A$-bimodule and $\mathrm{d}$ is right $H$-colinear.
In this case $\Delta_R$ is determined by (\ref{eq08}).

\item[ii.)] A FODC $(\Gamma,\mathrm{d})$ on a left $H$-comodule algebra 
$(A,\delta_L)$ is left $H$-covariant if and only if 
$(\Gamma,\Delta_L)$ is a left $H$-covariant $A$-bimodule and $\mathrm{d}$ is
left $H$-colinear. In this case $\Delta_L$ is determined by (\ref{eq08'}).

\item[iii.)] A FODC $(\Gamma,\mathrm{d})$ on an $H$-bicomodule algebra
$(A,\delta_R,\delta_L)$ is an $H$-bicovariant FODC if and only if 
$(\Gamma,\Delta_R,\Delta_L)$ is an $H$-bicovariant $A$-bimodule and $\mathrm{d}$ is
$H$-bicolinear.
\end{enumerate}
\end{proposition}

We define a morphism of right/left/bi $H$-covariant
FODCi $(\Gamma,\mathrm{d})$ and $(\Gamma',\mathrm{d}')$ on right/left/bi $H$-comodule
algebras $A,A'$ to be a morphism $(\phi,\Phi)$ between FODCi such that $\phi$ and $\Phi$
are right/left/bi $H$-colinear maps. Since $\Phi$, if it exists, is  determined by
$\phi$ via $\Phi(a^i\mathrm{d}b_i)=\phi(a^i)\mathrm{d}\phi(b_i)$ for
$a^i, b_i\in A$ (finite sum understood), we see that the FODC morphism
$(\phi,\Phi)$ is  right/left/bi $H$-colinear if and only if
$\phi:A\to A'$ is  right/left/bi $H$-colinear, indeed, for all $a\in A$,
$$\Delta_R(\Phi(\mathrm{d}a))=\Delta_R\mathrm{d}\phi(a)=\mathrm{d}\phi(a)_0\otimes
\phi(a)_1=\mathrm{d}\phi(a_0)\otimes
a_1=\Phi(\mathrm{d}a_0)\otimes a_1=(\Phi\otimes \mathrm{id}_H)\Delta_R(\mathrm{d}a)~.$$
The natural question arises if
Proposition~\ref{prop02} generalizes to the $H$-covariant setting.
\begin{proposition}\label{prop06}
Let $(\Gamma,\mathrm{d})$ be a right $H$-covariant FODC on a right $H$-comodule
algebra $A$.
\begin{enumerate}
\item[i.)] If $\iota\colon A'\rightarrow A$ is a right $H$-comodule
algebra homomorphism the pullback calculus 
$(\Gamma_\iota,\mathrm{d}_\iota)$ is a right $H$-covariant FODC on $A'$.

\item[ii.)] If $\pi\colon A\rightarrow A'$ is a surjective right $H$-comodule
algebra homomorphism the quotient calculus $(\Gamma_\pi,\mathrm{d}_\pi)$ is 
a right $H$-covariant FODC on $A'$.

\item[iii.)] 
 If the exact sequence of $H$-comodule algebras $0\to \ker\pi\to
  A\stackrel{\pi}\rightarrow A'\to 0$ splits via $\iota\colon
  A'\rightarrow A$, then $(\Gamma_\iota,\mathrm{d}_\iota)$
is equivalent to $(\Gamma_\pi,\mathrm{d}_\pi)$ as a right $H$-covariant FODC.
\end{enumerate}
Analogous statements hold for left $H$-covariant and $H$-bicovariant FODCi on
left $H$-comodule algebras and $H$-bicomodule algebras, respectively.
\end{proposition}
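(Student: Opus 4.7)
The plan is to leverage Proposition~\ref{prop02}, which already provides the underlying FODC structures, and show that at each step the right $H$-coaction $\Delta_R$ on $\Gamma$ (available by Lemma~\ref{lemma03}) restricts or descends compatibly with the constructions, verifying in each case that the resulting coaction is the one prescribed by formula~(\ref{eq08}) of Definition~\ref{rHc}.

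For (i), I would first observe that since $\iota\colon B\hookrightarrow A$ is right $H$-colinear, $\delta_R(\iota(b))=\iota(b_0)\otimes b_1$, so
$\Delta_R(\iota(b)\mathrm{d}\iota(b'))=\iota(b_0)\mathrm{d}\iota(b'_0)\otimes b_1b'_1\in\Gamma_\iota\otimes H$ for all $b,b'\in B$. Hence $\Delta_R$ restricts to a $\Bbbk$-linear map $\Delta_R^\iota\colon\Gamma_\iota\to\Gamma_\iota\otimes H$, and it is automatically a right $H$-coaction. Reading this formula in the notation of $\Gamma_\iota$ as a $B$-bimodule with differential $\mathrm{d}_\iota$, it becomes $b\cdot\mathrm{d}_\iota b'\mapsto b_0\cdot\mathrm{d}_\iota b'_0\otimes b_1b'_1$, which is exactly the coaction required by Definition~\ref{rHc}~i.) applied to $(\Gamma_\iota,\mathrm{d}_\iota)$.

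For (ii), the key point is that $I:=\ker\pi$ is a right $H$-subcomodule of $A$, since $\pi$ is right $H$-colinear: from $(\pi\otimes\mathrm{id}_H)\circ\delta_R=\delta_R'\circ\pi$ we obtain $\delta_R(I)\subseteq I\otimes H$. Consequently $\Delta_R(I\mathrm{d}A)\subseteq I\mathrm{d}A\otimes H$ and $\Delta_R(A\mathrm{d}I)\subseteq A\mathrm{d}I\otimes H$, so $\Delta_R(\Gamma_I)\subseteq\Gamma_I\otimes H$ and $\Delta_R$ descends to a well-defined right $H$-coaction $\Delta_R^\pi$ on $\Gamma_\pi=\Gamma/\Gamma_I$. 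On generators one reads $\Delta_R^\pi(\pi(a)\cdot\mathrm{d}_\pi\pi(a'))=[a_0\mathrm{d}a'_0]\otimes a_1a'_1=\pi(a)_0\cdot\mathrm{d}_\pi\pi(a')_0\otimes\pi(a)_1\pi(a')_1$, again matching (\ref{eq08}).

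For (iii), the map $\phi=\mathrm{id}_{A'}$ is trivially right $H$-colinear, and it remains to check that the $\Bbbk$-linear isomorphism $\Phi\colon\Gamma_\iota\to\Gamma_\pi$, $\omega\mapsto[\omega]$, intertwines the coactions. Applying the formulas established in (i) and (ii) to a generator $\iota(b)\mathrm{d}\iota(b')\in\Gamma_\iota$ shows that both $(\Phi\otimes\mathrm{id}_H)\circ\Delta_R^\iota$ and $\Delta_R^\pi\circ\Phi$ send it to $[\iota(b_0)\mathrm{d}\iota(b'_0)]\otimes b_1b'_1$, using only the right $H$-colinearity of $\iota$. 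The left $H$-covariant and $H$-bicovariant cases are completely symmetric: repeat the same argument for $\Delta_L$ defined by (\ref{eq08'}), and invoke both in the bicovariant setting, where no further compatibility beyond the one already established in Lemma~\ref{lemma03}~iii.) is needed. The only nontrivial step is part (ii): one must verify that the differential ideal $\Gamma_I$ is a subcomodule, which is the place where right $H$-colinearity of $\pi$ is genuinely used.
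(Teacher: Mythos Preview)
Your proposal is correct and follows essentially the same approach as the paper: invoke Proposition~\ref{prop02} for the underlying FODC, then use right $H$-colinearity of $\iota$ (resp.\ $\pi$) to show that $\Delta_R$ restricts to $\Gamma_\iota$ (resp.\ descends to $\Gamma_\pi$ because $\delta_R(I)\subseteq I\otimes H$ forces $\Delta_R(\Gamma_I)\subseteq\Gamma_I\otimes H$), and observe that (\ref{eq121}) is right $H$-colinear. If anything, you are slightly more explicit than the paper in checking that the induced coactions match the form (\ref{eq08}) required by Definition~\ref{rHc} and in verifying colinearity of $\Phi$ on generators, but the argument is the same.
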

\begin{proof}
From Proposition~\ref{prop02} it follows that $(\Gamma_\iota,\mathrm{d}_\iota)$
and $(\Gamma_\pi,\mathrm{d}_\pi)$ are FODCi on $A'$. We prove $i.)$ by observing 
that for all $a^i,b^i\in A$ (and understanding finite sums on the
indices $i$)
$$
\iota(a^i)_0\mathrm{d}(\iota(b^i)_0)\otimes\iota(a^i)_1\iota(b^i)_1
=\iota(a^i_0)\mathrm{d}(\iota(b^i_0))\otimes a^i_1b^i_1
\in\Gamma_\iota\otimes H
$$
by the right $H$-colinearity of $\iota$. Thus, the right $H$-coaction
$\Delta_R\colon\Gamma\rightarrow\Gamma\otimes H$ restricts to a $\Bbbk$-linear map
$\Delta_R|_{\Gamma_\iota}\colon\Gamma_\iota\rightarrow\Gamma_\iota\otimes H$
and consequently $(\Gamma_\iota,\mathrm{d}_\iota)$ is a
right $H$-covariant FODC. For $ii.)$ we first note that
$I=\ker\pi\subseteq A$ satisfies $\delta_R(I)\subseteq I\otimes H$ since
$\pi$ is right $H$-colinear. Then, by the right $H$-covariance of
$(\Gamma,\mathrm{d})$ we have that $\Gamma_I\subseteq \Gamma$ is a
right $H$-subcomodule:
$\Delta_R(\Gamma_I)\subseteq\Gamma_I\otimes H$, so that $\Delta_R\colon\Gamma\rightarrow\Gamma
\otimes H$ induces  a well-defined right $H$-coaction on $\Gamma_\pi$,
which reads
$$
\Delta_R([a\mathrm{d}b])=[a_0\mathrm{d}b_0]\otimes a_1ab_1
=\pi(a_0)\cdot\mathrm{d}_\pi\pi(b_0)\otimes a_1b_1
$$
for all $a,b\in A$. This shows that $(\Gamma_\pi,\mathrm{d}_\pi)$ is
right $H$-covariant.
The third statement trivially holds recalling that for the morphism $(\phi,\Phi)$ of FODCi $\Phi$ is
$H$-colinear if $\phi$ is $H$-colinear, which is the case since
$\phi=\mathrm{id}_{A'}$.
\end{proof}

We have the following corollary  of Proposition \eqref{prop06} ii.).
\begin{corollary}\label{corollary06}
Let $\pi\colon A\to H$ be a Hopf algebra epimorphism. Any right/left/bicovariant
FODC $(\Gamma,\mathrm{d})$ on the Hopf algebra $A$ 
gives a right/left $H$-covariant or $H$-bicovariant FODC on $A$ 
and projects to a
right/left/bicovariant FODC $(\Gamma_\pi,\mathrm{d}_\pi)$ on the Hopf
algebra $H$.
\end{corollary}
\begin{proof}
Here we prove the case of a bicovariant FODC $(\Gamma,\mathrm{d})$ on $A$.
The others are similarly proven.
The left and right $H$-coactions on $A$ defined by
  \begin{equation*}
    \begin{split}
  \delta_L&:=(\pi\otimes\mathrm{id}_A)\circ\Delta
    \colon A\to  H\otimes A\,,~ a\mapsto a_{-1}\otimes a_0:=[a_1]\otimes a_2\\[.2em]
    \delta_R&:=(\mathrm{id}_A\otimes\pi)\circ\Delta
    \colon A\to  A\otimes H\,,~ a\mapsto a_0\otimes a_1:=a_1\otimes[a_2]
  \end{split}
\end{equation*}
and those on $\Gamma$ defined by
\begin{equation*}
  \begin{split}
    \Delta^H_L:=(\pi\otimes
    \mathrm{id}_A)\circ\Delta_L&\colon\Gamma\to H\otimes \Gamma\,,
     ~ a\mathrm{d}b\mapsto a_{-1}b_{-1}\otimes a_0\mathrm{d}b_0
    \\[.2em]
  \Delta^H_R:=(\mathrm{id}_A\otimes\pi)    \circ \Delta_R&\colon\Gamma\to
  \Gamma\otimes H\,,
  ~ a\mathrm{d}b\mapsto a_0\mathrm{d}b_0\otimes a_1b_1~,\\
\end{split}
\end{equation*}
where $a,b\in A$,
structure $(\Gamma, \mathrm{d}_\pi)$ as an $H$-bicovariant FODC on
$A$.
The projection $\pi:A\to H$ is left and right $H$-colinear so that from 
Proposition \eqref{prop06}  the
induced FODC $(\Gamma_\pi, \mathrm{d}_\pi)$ on $H$ is $H$-bicovariant
with $H$-coaction given by the $H$-coproduct. 
\end{proof}

\subsection{Examples of covariant calculi}\label{ExSection}

In this section we describe covariant calculi on the quantum groups
$\mathcal{O}_q(\mathrm{GL}_2(\mathbb{C}))$,
$\mathcal{O}_q(\mathrm{SL}_2(\mathbb{C}))$
and on their parabolic subalgebras.

\begin{example}[\textit{4D calculus on $\mathcal{O}_q(\mathrm{GL}_2(\mathbb{C}))$}]\label{eq04}
Let $q\in\mathbb{C}$ be a non-zero complex number which is not a root of unity. 
The free $\mathbb{C}$-algebra $\mathbb{C}\langle\alpha,\beta,
\gamma,\delta\rangle$ generated by indeterminates $\alpha,\beta,\gamma,
\delta$ modulo the relations
\begin{equation}\label{qslq2}
\begin{array}{c}
\ap\bt=q^{-1}\bt\ap, \quad \ap\cm=q^{-1}\cm\ap, \quad \bt\dt=q^{-1}\dt\bt, \quad \cm\dt=q^{-1}\dt\cm, \\ \\
\bt\cm=\cm\bt, \qquad \ap\dt-\dt\ap=(q^{-1}-q)\bt\cm 
\end{array}
\end{equation}
is denoted by $\mathcal{O}_q(\mathrm{M}_2(\mathbb{C})):=\mathbb{C}\langle\alpha,\beta,
\gamma,\delta\rangle/I_M$, where $I_M$ is the ideal in $\mathbb{C}\langle\alpha,\beta,
\gamma,\delta\rangle$ generated by the (\ref{qslq2}). The quotient algebra
$\mathcal{O}_q(\mathrm{M}_2(\mathbb{C}))$ is a bialgebra with coalgebra structure
\begin{equation}\label{eq135}
    \Delta
    \begin{pmatrix}
    \alpha & \beta\\
    \gamma & \delta
    \end{pmatrix}
    =\begin{pmatrix}
    \alpha & \beta\\
    \gamma & \delta
    \end{pmatrix}\dot\otimes
    \begin{pmatrix}
    \alpha & \beta\\
    \gamma & \delta
    \end{pmatrix},~
    \epsilon\begin{pmatrix}
    \alpha & \beta\\
    \gamma & \delta
    \end{pmatrix}
    =\begin{pmatrix}
    1 & 0\\
    0 & 1
    \end{pmatrix},
\end{equation}
where $\dot\otimes$ denotes the tensor product and matrix multiplication. Furthermore,
there is a central element $\mathrm{det}_q:=\alpha\delta-q^{-1}\beta\gamma
\in\mathcal{O}_q(\mathrm{M}_2(\mathbb{C}))$, satisfying
$\Delta(\mathrm{det}_q)=\mathrm{det}_q\otimes\mathrm{det}_q$ and $\epsilon(\mathrm{det}_q)=1$.
On the quotient algebra $\mathcal{O}_q(\mathrm{GL}_2(\mathbb{C})):=
\mathcal{O}_q(\mathrm{M}_2(\mathbb{C}))[r]/\langle r~\mathrm{det}_q-1\rangle$
we induce a Hopf algebra
structure by (\ref{eq135}) and $\Delta(r)=r\otimes r$, $\epsilon(r)=1$, together with the
antipode
\begin{equation}\label{eq136}
    S\begin{pmatrix}
    \alpha & \beta\\
    \gamma & \delta
    \end{pmatrix}
    =r\begin{pmatrix}
    \delta & -q\beta\\
    -q^{-1}\gamma & \alpha
    \end{pmatrix},~~~~~~~~~~
    S(r)=\mathrm{det}_q.
\end{equation}
On the quantum group $A:=\mathcal{O}_q(\mathrm{GL}_2(\mathbb{C}))$ there is a
$4$-dimensional bicovariant FODC
$(\Gamma_{\mathrm{GL}},\mathrm{d}_{\mathrm{GL}})$ which we are
going to describe following the explicit exposition of
\cite{AschieriCastellani}\footnote{Note that in our
  convention we used $q^{-1}$ instead of $q$ and we rescaled
  $\omega^2,\omega^3$ by $q^{-1}$ for later use in Example \ref{sl2}}.
$\Gamma$ is a free left $A$-module generated by a basis 
$\{\omega^1,\omega^2,\omega^3,\omega^4\}$ of left coinvariant $1$-forms obeying the 
commutation relations
\begin{equation}\label{commrel}
\begin{split}
    \omega^1\alpha&=q\alpha\omega^1,\\
    \omega^2\alpha&=\alpha\omega^2,\\
    \omega^3\alpha&=-q^{-1}\lambda\beta\omega^1+\alpha\omega^3,\\
    \omega^4\alpha&=-\lambda\beta\omega^2+q^{-1}\alpha\omega^4,\\
    \omega^1\gamma&=q\gamma\omega^1,\\
    \omega^2\gamma&=\gamma\omega^2,\\
    \omega^3\gamma&=\gamma\omega^3-q^{-1}\lambda\delta\omega^1,\\
    \omega^4\gamma&=-\lambda\delta\omega^2+q^{-1}\gamma\omega^4,
\end{split}
\hspace{1,5cm}
\begin{split}
    \omega^1\beta&=q^{-1}\beta\omega^1,\\
    \omega^2\beta&=-q^{-1}\lambda\alpha\omega^1+\beta\omega^2,\\
    \omega^3\beta&=\beta\omega^3,\\
    \omega^4\beta&=q^{-1}\lambda^2\beta\omega^1-\lambda\alpha\omega^3
    +q\beta\omega^4,\\
    \omega^1\delta&=q^{-1}\delta\omega^1,\\
    \omega^2\delta&=\delta\omega^2-q^{-1}\lambda\gamma\omega^1,\\
    \omega^3\delta&=\delta\omega^3,\\
    \omega^4\delta&=q^{-1}\lambda^2\delta\omega^1-\lambda\gamma\omega^3
    +q\delta\omega^4
\end{split}
\end{equation}
with $\lambda:=q^{-1}-q$.
The differential is given in terms of this basis by
\begin{equation}\label{4D+}
\begin{array}{rcl}
\mathrm{d}_{\mathrm{GL}} \ap&=&\frac{q-1}{\lambda}\, \ap \omega^1 +\frac{q^{-1}-1}{\lambda}\,\ap \omega^4-\bt\omega^2 \\
\mathrm{d}_{\mathrm{GL}} \bt&=&Q \bt \omega^1 +\frac{q-1}{\lambda} \,\bt \omega^4-\ap\omega^3\\
\mathrm{d}_{\mathrm{GL}} \cm&=&\frac{q-1}{\lambda} \,\cm \omega^1 +\frac{q^{-1}-1}{\lambda}\, \cm \omega^4-\dt\omega^2 \\
\mathrm{d}_{\mathrm{GL}} \dt&=&Q \dt \omega^1 +\frac{q-1}{\lambda}\, \dt \omega^4-\cm\omega^3,
\end{array}
\end{equation}
where
$Q:=\frac{q^{-1}(\lambda^2+1)-1}{\lambda}=\frac{q^2+q+1}{q^2(q+1)}-1$. The
basis $1$-forms are then expressed explicitly as
\begin{equation}\label{eq138}
\begin{split}
    \omega^1&=Q'\bigg(
    (q^{-2}-q^{-1})(S(\alpha)\mathrm{d}_{\mathrm{GL}}\alpha
    +S(\beta)\mathrm{d}_{\mathrm{GL}}\gamma)
    +q^{-2}(q^{-1}-1)(S(\gamma)\mathrm{d}_{\mathrm{GL}}\delta
    +S(\delta)\mathrm{d}_{\mathrm{GL}}\delta)
    \bigg)\\
    \omega^2&=-q^2(S(\gamma)\mathrm{d}_{\mathrm{GL}}\alpha
    +S(\delta)\mathrm{d}_{\mathrm{GL}}\gamma)\\
    \omega^3&=-q^2(S(\alpha)\mathrm{d}_{\mathrm{GL}}\beta
    +S(\beta)\mathrm{d}_{\mathrm{GL}}\delta)\\
    \omega^4&=Q'\bigg(
    (q^{-5}-q^{-3}-q^{-2}+q^{-1})
    (S(\alpha)\mathrm{d}_{\mathrm{GL}}\alpha
    +S(\beta)\mathrm{d}_{\mathrm{GL}}\gamma)
    +(q^{-2}-q^{-1})(S(\gamma)\mathrm{d}_{\mathrm{GL}}\delta
    +S(\delta)\mathrm{d}_{\mathrm{GL}}\delta)
    \bigg),
\end{split}
\end{equation}
where $Q':=\frac{1}{q^{-5}-q^{-4}-q^{-2}+q^{-1}}$.

~\\

\noindent We now construct a bicovariant FODC
$(\Gamma_{P_{\mathrm{GL}}},\mathrm{d}_{P_{\mathrm{GL}}})$ on
$\mathcal{O}_q(P_{\mathrm{GL}}):=A/\langle\gamma\rangle$. This is done
via the Hopf algebra quotient $\pi_{P_{\mathrm{GL}}}\colon
A\rightarrow \mathcal{O}_q(P_{\mathrm{GL}}),~ a\mapsto
\pi_{P_{\mathrm{GL}}}(a)=:[a]$,  using Corollary~\ref{corollary06}. We recall that 
$\Gamma_{P_{\mathrm{GL}}}:=\Gamma_{\mathrm{GL}}/\Gamma_{I_\gamma}$,
where $\Gamma_{I_\gamma}:=A\cdot\mathrm{d}_{\mathrm{GL}}\langle\gamma\rangle
+\langle\gamma\rangle\mathrm{d}_{\mathrm{GL}}A\subseteq\Gamma_{\mathrm{GL}}$.
Writing $[\omega]$ for the equivalence class of
$\omega\in\Gamma_{\mathrm{GL}}$ on the quotient $\pi^\Gamma_{P_{\mathrm{GL}}}\colon\Gamma_{\mathrm{GL}}\rightarrow
\Gamma_{P_{\mathrm{GL}}}$, the
differential on $\Gamma_{P_{\mathrm{GL}}}$ is defined by
$\mathrm{d}_{P_{\mathrm{GL}}}\colon\mathcal{O}_q(P_{\mathrm{GL}})
\rightarrow\Gamma_{P_{\mathrm{GL}}}$, $\mathrm{d}_{P_{\mathrm{GL}}}[a]:=
[\mathrm{d}_{\mathrm{GL}}a]$ for all
$a\in\mathcal{O}_q(\mathrm{GL}_2(\mathbb{C}))$.
As a free left $H:=\mathcal{O}_q(P_{\mathrm{GL}})$-module
$\Gamma_{P_{\mathrm{GL}}}$ is $3$-dimensional with basis
$\{[\omega^1],[\omega^3],[\omega^4]\}$\footnote{Classically, the
dimension of $\Gamma_{P_\mathrm{GL}}$ is $3$, so its $q$-deformation is at least $3$-dimensional.  
From (\ref{eq138})
it immediately follows that $[\omega^2]=0$, which proves the claim.}.
Denoting the projected generators by  
\begin{equation*}
    \pi_{P_{\mathrm{GL}}}\begin{pmatrix}
    \alpha & \beta\\
    \gamma & \delta
    \end{pmatrix}=:
    \begin{pmatrix}
    t & p\\
    0 & s
    \end{pmatrix},~~~~~\pi_{\mathrm{GL}}(r)=:r',
\end{equation*}
the right $H$-action
and differential are explicitly given by
\begin{equation}\label{commrel2}
\begin{split}
    [\omega^1]t&=qt[\omega^1],\\
    [\omega^3]t&=-q^{-1}\lambda p[\omega^1]+t[\omega^3],\\
    [\omega^4]t&=q^{-1}t[\omega^4],
\end{split}
\hspace{0,5cm}
\begin{split}
    [\omega^1]p&=q^{-1}p[\omega^1],\\
    [\omega^3]p&=p[\omega^3],\\
    [\omega^4]p&=q^{-1}\lambda^2p[\omega^1]-\lambda t[\omega^3]
    +qp[\omega^4],
\end{split}
\hspace{0,5cm}
\begin{split}
    [\omega^1]s&=q^{-1}s[\omega^1],\\
    [\omega^3]s&=s[\omega^3],\\
    [\omega^4]s&=q^{-1}\lambda^2s[\omega^1]+qs[\omega^4]
\end{split}
\end{equation}
and
\begin{equation}\label{diff2}
    \mathrm{d}_{P_{\mathrm{GL}}}t=\frac{q-1}{\lambda}t[\omega^1]+\frac{q^{-1}-1}{\lambda}t[\omega^4],~~
    \mathrm{d}_{P_{\mathrm{GL}}}p=Qp[\omega^1]+\frac{q-1}{\lambda}p[\omega^4]-t[\omega^3],~~
    \mathrm{d}_{P_{\mathrm{GL}}}s=Qs[\omega^1]+\frac{q-1}{\lambda}s[\omega^4].
\end{equation}
\end{example}

\begin{example}[\textit{Bicovariant FODC on }$\mathcal{O}_q(\mathrm{SL}_2(\mathbb{C}))$ and its parabolic quotient]\label{sl2}
Consider the quotient Hopf algebra
$\cO_q(\rSL_2(\C)):=\mathcal{O}_q(\mathrm{GL}_2(\mathbb{C}))/\langle\mathrm{det}_q-1\rangle$
with coalgebra structure and antipode induced from (\ref{eq135}) and (\ref{eq136}).
We denote the associated 
Hopf algebra map projection by
$\mathrm{pr}_{\mathrm{SL}}\colon\mathcal{O}_q(\mathrm{GL}_2(\mathbb{C}))
\rightarrow\mathcal{O}_q(\mathrm{SL}_2(\mathbb{C}))$.
The quantum group $\cO_q(\rSL_2(\C))$
is the  Manin deformation (see \cite{ma1}) 
of the ring of algebraic functions on the
complex special linear group $\rSL_2(\C)$. 
It is the deformed algebra of regular functions on the complex
special linear group $\rSL_2(\C)$.
Let  $\cO_q(P)$ be the deformed algebra of functions on the Borel subgroup $P\subset 
\rSL_2(\C)$. We identify it with the quotient $\cO_q(\rSL_2(\C))/I_P$
where $I_P\subseteq\cO_q(\rSL_2(\C))$ is the Hopf ideal generated by the element $\cm$.
On generators the Hopf algebra projection is given by
\begin{equation}\label{quotient}
    \pi_P\colon\mathcal{O}_q(\mathrm{SL}_2(\mathbb{C}))\rightarrow\mathcal{O}_q(P),~~~~~
    \begin{pmatrix}
    \alpha & \beta\\
    \gamma & \delta
    \end{pmatrix}
    \mapsto\begin{pmatrix}
    t & p\\
    0 & t^{-1}
    \end{pmatrix}
\end{equation}
or, in other words, $\cO_q(P) =\cO_q(\rSL_2(\C))/I_P
=\C \langle t, t^{-1}, p \rangle /\langle tp-q^{-1}pt\rangle$.
Note that 
$\mathcal{O}_q(P)=\mathcal{O}_q(P_{\mathrm{GL}})/\langle\mathrm{det}_q-1\rangle
\cong\mathcal{O}_q(\mathrm{GL}_2(\mathbb{C}))/\langle\gamma,\mathrm{det}_q\rangle$
and the corresponding projection $\mathrm{pr}_P\colon
\mathcal{O}_q(P_{\mathrm{GL}})\rightarrow\mathcal{O}_q(P)$ makes the
diagram
\begin{equation}\label{eq137}
\begin{tikzcd}
\mathcal{O}_q(\mathrm{GL}_2(\mathbb{C}))
\ar{r}{\mathrm{pr}_{\mathrm{SL}}}
\ar{d}[swap]{\pi_{P_{\mathrm{GL}}}}
& \mathcal{O}_q(\mathrm{SL}_2(\mathbb{C}))
\ar{d}{\pi_P} \\
\mathcal{O}_q(P_{\mathrm{GL}})
\ar{r}{\mathrm{pr}_P}
& \mathcal{O}_q(P)
\end{tikzcd}
\end{equation}
commute.

There are bicovariant FODCi
$(\Gamma^+_{\mathrm{SL}},\mathrm{d}^+_{\mathrm{SL}})$ on 
$\mathcal{O}_q(\mathrm{SL}_2(\mathbb{C}))$
and $(\Gamma^+_P,\mathrm{d}_P^+)$
on $\mathcal{O}_q(P)$ induced from Example~\ref{eq04} as the quotient calculi
\begin{equation*}
\begin{tikzcd}
\Gamma_{\mathrm{GL}}
\arrow{rr}{\mathrm{pr}^\Gamma_{\mathrm{SL}}}
& & \Gamma^+_{\mathrm{SL}}\\
\mathcal{O}_q(\mathrm{GL}_2(\mathbb{C}))
\arrow{u}{\mathrm{d}_{\mathrm{GL}}}
\arrow{rr}{\mathrm{pr}_{\mathrm{SL}}}
& & \mathcal{O}_q(\mathrm{SL}_2(\mathbb{C}))
\arrow{u}[swap]{\mathrm{d}^+_{\mathrm{SL}}}
\end{tikzcd}
\hspace{1,5cm}
\begin{tikzcd}
\Gamma_{P_{\mathrm{GL}}}
\arrow{r}{\mathrm{pr}^\Gamma_P}
& \Gamma^+_P\\
\mathcal{O}_q(P_{\mathrm{GL}})
\arrow{u}{\mathrm{d}_{P_{\mathrm{GL}}}}
\arrow{r}{\mathrm{pr}_P}
& \mathcal{O}_q(P)
\arrow{u}[swap]{\mathrm{d}^+_P}~~~.
\end{tikzcd}
\end{equation*}
By the commutativity of the square diagram in (\ref{eq137}) and recalling that the modules of $1$-forms are generated by the algebras and differentials we obtain the commutative cube \begin{equation*}
\begin{tikzcd}[row sep={40,between origins}, column sep={40,between origins}]
      & \Gamma_{P_{\mathrm{GL}}} \ar{rr}{\mathrm{pr}^\Gamma_P} & & 
      \Gamma_P^+ \\
    \Gamma_{\mathrm{GL}}
    \ar[crossing over]{rr}{~\qquad\mathrm{pr}^\Gamma_{\mathrm{SL}}}
    \ar{ur}{\pi^\Gamma_{P_{\mathrm{GL}}}} & &
    \Gamma^+_{\mathrm{SL}}
    \ar{ur}{\pi^\Gamma_P} \\
      & \mathcal{O}_q(P_{\mathrm{GL}}) \ar{uu}
      \ar{rr}{~~~~~\mathrm{pr}_P} & & 
      \mathcal{O}_q(P) \vphantom{\times_{S_1}} \ar{uu}[swap]{\mathrm{d}^+_P} \\
    \mathcal{O}_q(\mathrm{GL}_2(\mathbb{C})) \ar{uu}{\mathrm{d}_{\mathrm{GL}}}
    \ar{rr}[swap]{\mathrm{pr}_{\mathrm{SL}}} \ar{ur}[swap]{\pi_{P_{\mathrm{GL}}}} & &
    \mathcal{O}_q(\mathrm{SL}_2(\mathbb{C})) 
    \ar[uu]
    \vphantom{\times_{S_1}} \ar{ur}[swap]{\pi_P}
\end{tikzcd}
\end{equation*}
To be more explicit, the bicovariant FODC
$(\Gamma^+_{\mathrm{SL}},\mathrm{d}^+_{\mathrm{SL}})$ on 
$\mathcal{O}_q(\mathrm{SL}_2(\mathbb{C}))$ is
$4$-dimensional with basis of left coinvariant $1$-forms $\{\omega^1,\omega^2,
\omega^3,\omega^4\}$. Up to the identification $\mathrm{det}_q=1$ the commutation relations
and differentials coincide with (\ref{commrel}) and (\ref{4D+}).
The bicovariant calculus $\Gamma^+_P$
is only $2$-dimensional: one of the three basis vectors $[\omega^1],[\omega^3],
[\omega^4]$ of $\Gamma_{P_{\mathrm{GL}}}$
vanishes on $\Gamma^+_P$,
namely $[\omega^1]$. Indeed, denoting equivalence classes on $\Gamma^+_P$ under
$\mathrm{pr}^\Gamma_P\colon\Gamma_{P_\mathrm{GL}}\rightarrow\Gamma^+_P$
by $[\cdot]'$ and using \eqref{qslq2} and \eqref{4D+} we obtain
\begin{equation*}
    0=[\mathrm{d}(\alpha\delta)]'
    =[\mathrm{d}(\alpha)\delta]'+[\alpha\mathrm{d}\delta]'
    =\bigg[\bigg(\frac{q-1}{\lambda}+qQ\bigg)\omega^1\bigg]',
\end{equation*}
which implies $[\omega^1]'=0$. Then, the commutation relations (\ref{commrel2})
and the differentials (\ref{diff2}) project to
\begin{equation*}
\begin{split}
    [{\omega}^3]' t=t[{\omega}^3]'\,,\,\,  [{\omega}^3]' p=p[{\omega}^3]'\,,\,\,[{\omega}^4]' t=q^{-1}t[{\omega}^4]'\,,\,\,[{\omega}^4]'p=qp[{\omega}^4]'-\lambda t[{\omega}^3]'~,\\[.2em]
    \mathrm{d}^+_P t=\mbox{$\frac{q^{-1}-1}{\lambda}$}
    \,t\,[{\omega}^4]'\,,\,\,\,\, \mathrm{d}^+_P p=-t \,[{\omega}^3]'+\mbox{$\frac{q-1}{\lambda}$}\,p\,[{\omega}^4]'~~~~~~~~~~~~~~~~~~~~~~~
\end{split}
\end{equation*}
on $\mathcal{O}_q(P)$, where we identified $t,p$ as generators in
$\mathcal{O}_q(P)$.
\end{example}

\begin{example}[\textit{$3$-dim calculus on $\mathcal{O}_q(\mathrm{SL}_2(\mathbb{C}))$ and its parabolic quotient $\mathcal{O}_q(P)$}]\label{ex01}
Let $A:=\mathcal{O}_q(\mathrm{SL}_2(\mathbb{C}))$ and
$H:=\mathcal{O}_q(P)$ be the Hopf algebras from Example~\ref{sl2}.
There is a $3$-dimensional left covariant FODC 
$(\Gamma_{\mathrm{SL}},\mathrm{d}_{\mathrm{SL}})$
on $A$ described in \cite{KSbook}~Sect.~14.1.4.
$\Gamma_\mathrm{SL}$ is the free left $A$-module generated by the basis
$\{\omega^0,\omega^1,\omega^2\}$ of left coinvariant $1$-forms
with commutation relations
\begin{equation}\label{eq124}
\begin{split}
    &\omega^j\alpha=q^3\alpha\omega^j,~~~~~
    \omega^j\beta=q^{-3}\beta\omega^j,\\
    &\omega^j\gamma=q^3\gamma\omega^j,~~~~~
    \omega^j\delta=q^{-3}\delta\omega^j,
\end{split}
\end{equation}
for $j=0,2$ and
\begin{equation}\label{eq125'}
\begin{split}
    &\omega^1\alpha=q^2\alpha\omega^1+(q^2-1)\beta\omega^2,~~~~~
    \omega^1\beta=q^{-2}\beta\omega^1+(q^2-1)\alpha\omega^0,\\
    &\omega^1\gamma=q^2\gamma\omega^1+(q^2-1)\delta\omega^2,~~~~~~
    \omega^1\delta=q^{-2}\delta\omega^1+(q^2-1)\gamma\omega^0.
\end{split}
\end{equation}
The differential $\mathrm{d}_{\mathrm{SL}}\colon A
\rightarrow\Gamma_{\mathrm{SL}}$ is determined by
\begin{equation*}
\begin{split}
    &\mathrm{d}_{\mathrm{SL}}\alpha=\alpha\omega^1+\beta\omega^2,~~~~~
    \mathrm{d}_{\mathrm{SL}}\beta=\alpha\omega^0-q^{-2}\beta\omega^1,\\
    &\mathrm{d}_{\mathrm{SL}}\gamma=\gamma\omega^1+\delta\omega^2,~~~~~~
    \mathrm{d}_{\mathrm{SL}}\delta=\gamma\omega^0-q^{-2}\delta\omega^1
\end{split}
\end{equation*}
and thus
\begin{equation*}
\begin{split}
    \omega^0=\delta\mathrm{d}_{\mathrm{SL}}\beta
    -q\beta\mathrm{d}_{\mathrm{SL}}\delta\,,~~~~~
    \omega^1=\delta\mathrm{d}_{\mathrm{SL}}\alpha
    -q\beta\mathrm{d}_{\mathrm{SL}}\gamma\,,~~~~~
    \omega^2=-q^{-1}\gamma\mathrm{d}_{\mathrm{SL}}\alpha
    +\alpha\mathrm{d}_{\mathrm{SL}}\gamma\,.
\end{split}
\end{equation*}

As in Example~\ref{sl2}
elements in the equivalence class $H$ are denoted by $[\omega]$ with a
representative $\omega\in\Gamma_{\mathrm{SL}}$.
The induced quotient calculus $(\Gamma_P,\mathrm{d}_P)$ on $H$
is the $2$-dimensional
left covariant FODC with $\Gamma_P$ being the free left $H$-module generated by
the basis $\{[\omega^0],[\omega^1]\}$ of left coinvariant elements (notice that
$[\omega^2]=0$, while $[\omega^0],[\omega^1]$ are linearly independent). 
The resulting commutation relations are
\begin{equation*}
\begin{split}
    &[\omega^0]t=q^3t[\omega^0],~~~~~~~
    [\omega^0]p=q^{-3}p[\omega^0],\\
    &[\omega^1]t=q^2t[\omega^1],~~~~~~~
    [\omega^1]p=q^{-2}p[\omega^1]+(q^2-1)t[\omega^0]
\end{split}
\end{equation*}
and the differential reads
\begin{equation*}
    \mathrm{d}_Pt=t[\omega^1],~~~~~
    \mathrm{d}_Pp=t[\omega^0]-q^{-2}p[\omega^1],~~~~~
    \mathrm{d}_Pt^{-1}=-q^{-2}t^{-1}[\omega^1].
\end{equation*}
\end{example}

\subsection{The smash product calculus}\label{smash}
In this section we recall the construction of a covariant differential calculus
on the smash product algebra $B\#H$ from an $H$-module calculus on an
$H$-module algebra $B$ and a bicovariant
calculus on the Hopf algebra $H$, given in \cite{pflaum2}.
If the $H$-action on $B$ is trivial  we recover the tensor product differential
calculus on $B\otimes H$ described in Proposition \ref{prop01}.

\medskip
Let $B$ be a left $H$-module algebra, with action $\rhd: H\otimes B\to
B$. Let  $M$ be  a $B$-bimodule, with actions $\cdot\colon B\otimes M\to M$,
$\cdot\colon M\otimes B\to M$, and a left $H$-module with action that with
slight abuse we denote  $\rhd\colon H\otimes M\to
M$.  We say that $M$ is a {\it relative $H$-module
$B$-bimodule} if the $H$ and $B$ actions have the compatibility, for
all $h\in H, b,b'\in B$, $m\in M$, $$h\rhd (b \cdot m\cdot
b')=(h_1\rhd b)\cdot (h_2\rhd m)\cdot (h_3\rhd b')~.$$
Similar definitions are given if $M$ is just a left or a right $B$-module.

\begin{definition}
Let $B$ be a left $H$-module algebra with action $\rhd: H\otimes B\to B$. A FODC $(\Gamma_B,{\mathrm{d}_B})$ on
$B$ is said to be an {\it $H$-module FODC} if for any  $b^i,b_i\in B$,
$i=1,2,...n$, ($n\in \mathbb{N}$) and
$h\in H$ we have
\begin{equation}\label{defHFODC}
\sum_ib^i\mathrm{d}_Bb_i=0 \,\Rightarrow\,  \sum_i(h_1\rhd
b^i)\mathrm{d}_B(h_2\rhd b_i)=0 ~.
\end{equation}
\end{definition}
The rationale of this definition is in the following proposition.
\begin{proposition}
$(\Gamma_B,{\mathrm{d}_B})$ is an $H$-module FODC  if and only if
  $\Gamma_B$ is a relative $H$-module $B$-bimodule and $\mathrm{d}_B\colon B\to
  \Gamma_B$ is an $H$-module map: for all $h\in H$, $b\in B$, $h\rhd \mathrm{d}_Bb=\mathrm{d}_B(h\rhd b)$.
  \end{proposition}
  \begin{proof}
Since \eqref{defHFODC} holds we have a well-defined $H$-action given
 by,
 $$
H\otimes \Gamma_B\to \Gamma_B ~,~~ h\rhd\sum_i(b^i\mathrm{d}_Bb_i):=\sum_i(h_1\rhd
b^i)\mathrm{d}_B(h_2\rhd b_i)$$
where we used that $\Gamma_B=B\mathrm{d}_BB$, so that
$\sum_i b^i\mathrm{d}_Bb_i$ is a generic element of $\Gamma_B$. Trivially  $\mathrm{d}_B: B\to
  \Gamma_B$ is an $H$-module map. Compatibility of this action with
  the left $B$-action is immediate. Compatibility with the right $B$-action:
$h\rhd((\sum_i b^i\mathrm{d}_Bb_i) b)=(h_1\rhd\sum_i
b^i\mathrm{d}_Bb_i)(h_2\rhd b)$, follows writing $(\sum_i
b^i\mathrm{d}_Bb_i) b=\sum_i b^i\mathrm{d}_B(b_i b)-\sum_i
b^ib_i\mathrm{d}_Bb$.

Vice versa, if
  $\Gamma_B$ is a relative $H$-module $B$-bimodule and $\mathrm{d}_B: B\to
  \Gamma_B$ is an $H$-module map, the  implication \eqref{defHFODC} follows
  from its equivalence with $\sum_ib^i\mathrm{d}_Bb_i=0 \,\Rightarrow\,  h\rhd\sum_i(b^i\mathrm{d}_Bb_i)=0$.
\end{proof}

Given an $H$-module algebra $B$ and a FODC
$(\Gamma_{B},\mathrm{d}_B)$ and
a left covariant FODC
$(\Gamma_{H},\mathrm{d}_H)$,
we consider the $\Bbbk$-module
\begin{equation}\label{eq126}
    \Gamma_\#:=\Gamma_B\otimes H\,\oplus\, B\otimes\Gamma_H~
\end{equation}
and study when there is a FODC  $(\Gamma_\#,\mathrm{d}_\#)$ on
$B\#H$.
The $\Bbbk$-module $\Gamma_\#$  in \eqref{eq126}
is a direct sum of tensor products of left $H$-modules hence it carries a
left $H$-action canonically induced from the $H$-actions on the
$H$-modules $\Gamma_B$,  $H$,  $B$, $\Gamma_H$: 
for all $h\in H$, $\omega_B\otimes h'+b'\otimes \omega_H\in   \Gamma_\#$,
\begin{equation*}
h\cdot  (\omega_B\otimes h'+b'\otimes \omega_H)=h_1\rhd\omega_B\otimes h_2h'+h_1\rhd b'\otimes h_2\omega_H~,
\end{equation*}
extended linearly to all $\Gamma_\#$.
Defining the left $B$-action on $\Gamma_\#$ as the $B$-action on the first
factors in the tensor products $\Gamma_B\otimes H$ and
$B\otimes\Gamma_H$ we obtain the left $B\#H$-action on $\Gamma_\#$
using that $b\# h=(b\# 1_H)(1_B\# h)$:
\begin{equation}\label{leftBHaction}
  (b\# h)\cdot (\omega_B\otimes h'+b'\otimes
  \omega_H):=b(h_1\rhd\omega_B)\otimes h_2h'+b(h_1\rhd b)'\otimes
  h_2\omega_H~.
  \end{equation}
The proof that this indeed defines  an action of the algebra  $B\#H$
on $\Gamma_\#$, actually a  $B\#H$-action on the submodules
$\Gamma_B\otimes H$ and $B\otimes\Gamma_H$, uses the same steps of the proof of associativity of the
multiplication in $B\#H$.
We define 
\begin{equation*}
  (\omega_B\otimes h'+b'\otimes \omega_H)\cdot b:=
  \omega_B(h'_1\rhd b)\otimes h'_2+b'((\omega_H)^{}_{-1}\rhd b)\otimes (\omega_H)^{}_{0}
\end{equation*}
and extend it linearly to all $\Gamma_\#$. It is easy to prove that this is
a right $B$-action on  $\Gamma_\#$. Defining the right $H$-action on
$\Gamma_\#$ as the right $H$-action on the second
factors in the tensor products $\Gamma_B\otimes H$ and
$B\otimes\Gamma_H$ we obtain the right $B\#H$-action on $\Gamma_\#$
\begin{equation}\label{rightBHaction}
(\omega_B\otimes h'+b'\otimes \omega_H)\cdot (b\#h):=
  \omega_B(h'_1\rhd b)\otimes
  h'_2h + b'((\omega_H)^{}_{-1}\rhd b)\otimes (\omega_H)^{}_{0}h~.
\end{equation}
We prove commutativity of the left and right $B\#H$-actions on
$\Gamma_B\otimes H$:
\begin{equation*}
  \begin{split}
(b\# h)\cdot ((\omega_B\otimes h')\cdot (\tilde b\# \tilde h))&=
(b\# h)\cdot (\omega_B( h'_1\rhd\tilde b)\otimes h'_2\tilde h)\\
&=
b\cdot(h_1\rhd(\omega_B( h'_1\rhd\tilde b))\otimes h_2h'_2\tilde h)\\
&=b(h_1\rhd\omega_B) (h_2 h'_1\rhd\tilde b)\otimes h_3h'_2\tilde h\\
&=(b(h_1\rhd\omega_B) \otimes h_2 h')\cdot(\tilde b\# \tilde h)\\
&=((b\# h)\cdot (\omega_B\otimes h'))\cdot (\tilde b\# \tilde h)~.
\end{split}
\end{equation*}
Commutativity of the left and right $B\#H$-actions on
$B\otimes \Gamma_H$ is similarly proven using the left $H$-comodule
structure of $\Gamma_H$. This shows that $\Gamma_\#$ is a
$B\#H$-bimodule.
We sometimes write $\Gamma_B\#H$ and $B\#\Gamma_H$ in order to stress that
we consider them as bimodules over $B\#H$ instead of $B\otimes H$.
As in \cite{pflaum2}~Thm.~2.7 we have a
FODC on $B\#H$ with $B\#H$-bimodule $\Gamma_\#$.
\begin{theorem}\label{prop03}
Let $H$ be a Hopf algebra and $B$ a left $H$-module algebra.
  Given an $H$-module FODC $(\Gamma_B,\mathrm{d}_B)$ on $B$ and a
left covariant FODC $(\Gamma_H,\mathrm{d}_H)$ on $H$ there is a FODC $(\Gamma_\#,\mathrm{d}_\#)$ on $B\#H$,
where the $\Bbbk$-module 
$\Gamma_\#:=\Gamma_B\otimes H\oplus B\otimes\Gamma_H$ is endowed with the
$B\#H$-bimodule actions \eqref{leftBHaction}, \eqref{rightBHaction}
and the exterior derivative  $\mathrm{d}_\#\colon B\#H\rightarrow\Gamma_\#$ is defined by
\begin{equation*}
    \mathrm{d}_\#(b\#h)
    :=\mathrm{d}_Bb\otimes h+b\otimes\mathrm{d}_Hh
\end{equation*}
for all $b\in B$ and $h\in H$.
\begin{proof}
  We show that $\mathrm{d}_\#\colon B\#H\rightarrow\Gamma_\#$
  satisfies the Leibniz rule:
\begin{align*}
    \mathrm{d}_\#((b\#h)(b'\#h'))
    &=\mathrm{d}_\#(b(h_1\rhd b')\#h_2h')\\
    &=\mathrm{d}_B(b(h_1\rhd b'))\otimes h_2h'
    +b(h_1\rhd b')\otimes\mathrm{d}_H(h_2h')\\
    &=(\mathrm{d}_Bb)(h_1\rhd b')\otimes h_2h'
    +\,b\!\;\mathrm{d}_B(h_1\rhd b')\otimes h_2h'\\
    &~~~~+b(h_1\rhd b')\otimes(\mathrm{d}_Hh_2)h'
    \,+\, b(h_1\rhd b')\otimes h_2\!\:\mathrm{d}_Hh'\\
    &=\mathrm{d}_\#(b\#h)\cdot(b'\#h')
    +(b\#h)\cdot\mathrm{d}_\#(b'\#h')
\end{align*}
for all $b,b'\in B$ and $h,h'\in H$.
We are left to prove that 
$\Gamma_\#=(B\#H)\cdot\mathrm{d}_\#(B\#H)$. Let $b,b'\in B$ and $h,h'\in H$.
Then
\begin{align*}
    b\!\;\mathrm{d}_Bb'\otimes h
    =b\!\;\mathrm{d}_Bb'\otimes h
    +bb'\otimes\mathrm{d}_Hh
    -bb'\otimes\mathrm{d}_Hh
    =(b\#1)\cdot\mathrm{d}_\#(b'\#h)-(bb'\#1)\cdot\mathrm{d}_\#(1\#h)
\end{align*}
and $b\otimes h\!\:\mathrm{d}_Hh'=(b\#h)\cdot\mathrm{d}_\#(1\#h')$
establish the equality in question.
\end{proof}
\end{theorem}
The smash product construction  of differential calculi
is compatible with right $H$-coactions.
\begin{corollary}\label{CorSmash}
Let $H$ be a Hopf algebra, $B$ a left $H$-module algebra,
$(\Gamma_B,\mathrm{d}_B)$ an $H$-module FODC  on $B$ and 
$(\Gamma_H,\mathrm{d}_H)$ a bicovariant FODC on $H$. The FODC
$(\Gamma_\#,\mathrm{d}_\#)$ of Theorem \ref{prop03} is then right $H$-covariant.
\end{corollary}
\begin{proof}
 Define a right $H$-coaction on $\Gamma_\#$ via
\begin{equation*}
\begin{split}
    \Gamma_\#&\xrightarrow{\Delta_\#}\Gamma_\#\otimes H\\
    \omega_B\otimes h+b\otimes\omega_H
    &\:\!\longmapsto\,\omega_B\otimes h_1\otimes h_2
    +b\otimes(\omega_H)^{}_0\otimes(\omega_H)^{}_1.
\end{split}
\end{equation*}
We prove that the calculus is right $H$-covariant by showing right
$H$-colinearity of the differential, cf. Proposition~\ref{lemma03}~i.).
For all $ b\#h\in B\# H$,
\begin{align*}
    \Delta_\#(\mathrm{d}_\#(b\#h))
    &=\Delta_\#(\mathrm{d}_Bb\otimes h
    +b\otimes \mathrm{d}_Hh)\\
    &=\mathrm{d}_Bb\otimes h_1\otimes h_2
      +b\otimes \mathrm{d}_Hh_1\otimes h_2\\
  &=\mathrm{d}_\#(b\#h_1)\otimes
      h_2\\
      &=(\mathrm{d}_\#\otimes \mathrm{id}_A)\Delta_\#(b\#h)~.
\end{align*}
\\[-3.4em]\end{proof}

\subsection{Base forms, horizontal forms and principal covariant calculi}\label{pc-sec}
In this section we study differential calculi on noncommutative
principal bundles over affine bases. We assume the ground ring $\Bbbk$ to be a
field and recall that the Hopf algebra (quantum structure group) $H$
is assumed to have invertible antipode. We have seen that in this setting a principal comodule algebra
$B=A^{\coi H}\subseteq A$ is equivalently a faithfully flat Hopf Galois extension $B=A^{\coi H}\subseteq A$.

\begin{definition}\label{horforms-def}
  Let $(\Gamma_A, \rm d_A)$ be a FODC on a right $H$-comodule algebra
$A$. We call 
the pullback calculus $(\Gamma_B,\mathrm{d}_B):=(B\rd_A|^{\phantom{jj}}_BB,\rd_A|^{\phantom{jj}}_B)$ on
$B:=A^{\mathrm{co}H}\subseteq A$ the
FODC of {\textit{base forms}}. We further call
$\Gamma^\mathrm{hor}:=A\Gamma_B$, the $(A,B)$-bimodule of \textit{horizontal forms}.
\end{definition}

If $(\Gamma_A,\mathrm{d}_A)$ is a right $H$-covariant FODC, base and
horizontal forms can be further characterized.

\begin{theorem}\label{prop-ff}
Let $A$ be a principal comodule algebra, $B:=A^{\coi H}$
   and $(\Gamma_A,\mathrm{d}_A)$ a right
   $H$-covariant FODC on $A$.
The natural map
  $
  A \otimes_B \Gamma_B \to\Gamma_A,~ a \otimes \omega \mapsto a\omega
  $
  is injective and  gives the left $A$-module isomorphism
  $$
  A \otimes_B\Gamma_B\cong A\Gamma_B ~.
  $$ 
The $B$-bimodule of base forms is the intersection of those of horizontal and coinvariant forms
\begin{equation*}
    \Gamma_B=\Gamma_A^\mathrm{hor}\cap\Gamma_A^{\mathrm{co}H}.
\end{equation*}
\end{theorem}

\begin{proof}
  The inclusion $A\Gamma_B \subseteq \Gamma_A$
  implies $(A\Gamma_B)^{\coi H} \subseteq (\Gamma_A)^{\coi H}$ and, since $A$
  is a flat $B$-module, we have the inclusion
  \begin{equation}\label{ff-1}
  A \otimes_B (A\Gamma_B)^{\coi H} \lra A \otimes_B (\Gamma_A)^{\coi H}
  \cong \Gamma_A
  \end{equation}
  where for the isomorphism $A \otimes_B (\Gamma_A)^{\coi H}\cong \Gamma_A$
  we used Theorem \ref{ta-schn}.
  The inclusion $\Gamma_B \subseteq   (A\Gamma_B)^{\coi H}\,$ and flatness of $A$
  over $B$ imply the inclusion
  \begin{equation}\label{ff-2}
  A \otimes_B \Gamma_B \lra A \otimes_B (A\Gamma_B)^{\coi H}
  \end{equation}
  that composed with (\ref{ff-1}) gives injectivity of the natural map
  $
  A \otimes_B \Gamma_B \to\Gamma_A$
  and hence the isomorphism $A \otimes_B \Gamma_B\cong A\Gamma_B$.

  From  Theorem \ref{ta-schn} we then have $
    \Gamma_B\cong (A \otimes_B \Gamma_B)^{\coi H}\cong
    (A\Gamma_B)^{\coi H}$, that is,
    $\Gamma_B=\Gamma_A^\mathrm{hor}\cap\Gamma_A^{\mathrm{co}H}$.
 \end{proof}

We now consider a FODC $(\Gamma_H,\mathrm{d}_H)$ on $H$, with
$(\Gamma_A,\mathrm{d}_A)$ that is not necessarily right $H$-covariant.

\begin{definition}\label{PrincipalDC}
A FODC $(\Gamma_A,\mathrm{d}_A)$
on a principal comodule algebra $A$ together with a left covariant FODC $(\Gamma_H,\mathrm{d}_H)$ on $H$,
is called a \textit{principal calculus}
on $A$
if we have the short exact sequence
\begin{equation}\label{eq134}
    0\rightarrow A\otimes_B\Gamma_B\rightarrow\Gamma_A\xrightarrow{\mathrm{ver}}
    A\square_H\Gamma_H\rightarrow 0
\end{equation}
where
 $
    A\square_H\Gamma_H:=\mathrm{span}_\Bbbk\{
    a\otimes\omega^H\in A\otimes\Gamma_H~|~\delta_A(a)\otimes\omega^H
    =a\otimes\Delta_L^{\Gamma_H}(\omega^H)\}
$
    is the \textit{cotensor product} of $A$ and $\Gamma_H$, and the
    \textit{vertical map}
     $\mathrm{ver}:\Gamma_A\to A\square_H\Gamma_H$ is well-defined as a $\Bbbk$-linear map by
\begin{equation}\label{ver}
    \mathrm{ver}(a\mathrm{d}_Aa'):=a_0a'_0\otimes a_1\mathrm{d}_Ha'_1~.
\end{equation}
If, in addition, the FODC $(\Gamma_A,\mathrm{d}_A)$ is right $H$-covariant
and the FODC $(\Gamma_H,\mathrm{d}_H)$ is bicovariant, we say we have
a \textit{principal covariant calculus}
on the principal comodule algebra $A$.
\end{definition}

We can easily check that $\mathrm{ver}$, if it is well-defined, it satisfies
\begin{equation}\label{eq133}
    \mathrm{ver}(a\cdot\omega\cdot a')
    =\delta_A(a)\mathrm{ver}(\omega)\delta_A(a')
\end{equation}
for all $a,a'\in A$ and $\omega\in\Gamma_A$.
The sequence is therefore a sequence of left $A$-modules right $B$-modules.

\medskip
The following lemma provides a sufficient condition for the existence
of the vertical map.

\begin{lemma}\label{lemma-ver}
Let $\pi\colon A\rightarrow H$ be a Hopf algebra quotient
and $A$ be a principal comodule algebra. For any left covariant
calculus $(\Gamma_A,\mathrm{d}_A)$ on $A$ and induced left covariant
quotient calculus $(\Gamma_H,\mathrm{d}_H)$ on $H$ the vertical map
is well-defined.
\end{lemma}
\begin{proof}
Let us denote the left $A$-coaction on $\Gamma_A$ by
$\Delta_L^{\Gamma_A}\colon\Gamma_A\rightarrow A\otimes\Gamma_A$,
$\omega^A\mapsto\omega^A_{-1}\otimes\omega^A_0$,
and the quotient map of forms by
$\pi_\Gamma\colon\Gamma_A\rightarrow\Gamma_H$.
Then
\begin{equation*}
\begin{split}
    (\mathrm{id}_A\otimes\pi_\Gamma)\circ\Delta_L^{\Gamma_A}\colon\Gamma_A
    &\rightarrow A\otimes\Gamma_H\\
    \omega^A&\mapsto\omega^A_{-1}\otimes[\omega^A_0]
\end{split}
\end{equation*}
is a well-defined map. We prove that this map
coincides with $\mathrm{ver}$. Recall that the induced right $H$-coaction
on $A$ is given by 
$\delta_A:=(\mathrm{id}_A\otimes\pi)\circ\Delta
\colon A\rightarrow A\otimes H$, $a\mapsto a_0\otimes a_1:=a_1\otimes[a_2]$.
For $a,a'\in A$ we obtain by the
left covariance of $(\Gamma_A,\mathrm{d}_A)$
\begin{align*}
    (\mathrm{id}_A\otimes\pi_\Gamma)(\Delta_L^{\Gamma_A}(a\mathrm{d}_Aa'))
    =(a\mathrm{d}_Aa')_{-1}\otimes[(a\mathrm{d}_Aa')_0]
    =a_1a'_1\otimes[a_2\mathrm{d}_Aa'_2]
    =a_1a'_1\otimes[a_2]\mathrm{d}_H[a'_2]
    =a_0a'_0\otimes a_1\mathrm{d}_Ha'_1,
\end{align*}
i.e. $(\mathrm{id}_A\otimes\pi_\Gamma)\circ\Delta_L^{\Gamma_A}=\mathrm{ver}$,
as claimed.
\end{proof}

For a right $H$-covariant calculus on a principal comodule algebra the second arrow
in \eqref{eq134} is injective by Theorem \ref{prop-ff}.
In this context of principal comodule algebras the notion of principal
covariant calculus of Definition \ref{PrincipalDC} is then equivalent to 
that of strong quantum principal bundle in \cite{brz1}~Def.~4.9 and
\cite{bm}~Sect.~5.4. This follows from the canonical identifications
$A \otimes_B\Gamma_B\cong A\Gamma_B$, cf. Theorem \ref{prop-ff}, and
$A\square_H\Gamma_H\cong A\otimes {}^{\coi H}\Gamma_H$,
where  ${}^{\coi H}\Gamma_H$ denotes the module of left coinvariant one forms characterizing the covariant FODC
$(\Gamma_H,\mathrm{d}_H)$ (see \cite{wor}). 
In this light Theorem \ref{prop-ff}, which does not assume an exact
sequence, can be read as generalizing the conditions of
\cite{bm}~Cor.~5.53 for the equality
$\Gamma_B=\Gamma_A^\mathrm{hor}\cap\Gamma_A^{\mathrm{co}H}$ to hold true.

\medskip

We present two examples of principal covariant calculus, further
examples including a principal calculus are in Section \ref{sectionQDCProjSpace}.

\begin{example}[\textit{Smash product calculus}]\label{ex02}
Let $(\Gamma_H,\mathrm{d}_H)$ be a bicovariant FODC on a Hopf algebra $H$
and $(\Gamma_B,\mathrm{d}_B)$ an $H$-module FODC on a left $H$-module algebra
$B$. Then, the smash product calculus $(\Gamma_\#,\mathrm{d}_\#)$ of
Section \ref{smash} is a principal covariant calculus on
$B\#H$. 
\begin{proof}
Recall from Section \ref{sec:sp} that $B=(B\#H)^{\mathrm{co}H}\subseteq B\#H$ is a trivial
Hopf--Galois extension.  We show that the sequence  in \eqref{eq134} is equivalent to
the exact sequence
$$
0\rightarrow\Gamma_B\#H\rightarrow\Gamma_B\#H\oplus B\#\Gamma_H\xrightarrow{\mathrm{pr}}B\#\Gamma_H\rightarrow 0~.
$$
From\cite{pflaum2}~Thm.~4.1 and Lem.~4.2
we have the isomorphisms of right $H$-covariant $B\#H$-bimodules
\begin{equation}\label{eq125''}
\begin{split}
    (B\#H)\otimes_B\Gamma_B\cong
    \Gamma_B\#H,~~~~~~~~&
    (b\#h)\otimes_B\omega^B\mapsto b(h_1\rhd\omega^B)\#h_2,\\
    (B\#H)\square_H\Gamma_H\cong
    B\#\Gamma_H,~~~~~~~~&
    (b\#h)\otimes\omega^H\mapsto b\#\epsilon(h)\omega^H~.
\end{split}
\end{equation}
Their inverses are given by $\omega^B\#h\mapsto(1\#h_2)\otimes_B(
\overline{S}(h_1)\rhd\omega^B)$ and
$b\#\omega^H\mapsto(b\#\omega^H_{-1})\otimes\omega^H_0$,
respectively.
Under this first isomorphism the second arrow in \eqref{eq134}
becomes the inclusion $\Gamma_B\#H\to \Gamma_B\#H\oplus B\#\Gamma_H$.
Finally, the vertical map equals the projection
$\mathrm{pr}\colon\Gamma_B\#H\oplus B\#\Gamma_H\rightarrow
B\#\Gamma_H$ up to the second isomorphism in \eqref{eq125''}
(cf. e.g. \cite{pflaum2}~Thm.~2.9).
\end{proof}
\end{example}

\begin{example}[\textit{3D calculus on the q-monopole fibration}]
Consider $A=\mathcal{O}_q(\mathrm{SL}_2(\mathbb{C}))$ ($q$ real,
$q\not=\pm 1$) with the
Hopf algebra quotient
\begin{equation*}
    \pi\colon A\rightarrow H 
,~~~~~~
    \begin{pmatrix}
    \alpha & \beta\\
    \gamma & \delta
    \end{pmatrix}\mapsto\begin{pmatrix}
    \alpha & 0\\
    0 & \delta
    \end{pmatrix}
    =\begin{pmatrix}
    \phi & 0\\
    0 & \phi^{-1}
    \end{pmatrix}.
\end{equation*}
The projection $\pi$ induces a right $H$-coaction:
\begin{equation*}
    \delta_A:=(\mathrm{id}\otimes\pi)\circ\Delta\colon A\rightarrow A\otimes H,~~~~~~~
    \begin{pmatrix}
    \alpha & \beta\\
    \gamma & \delta
    \end{pmatrix}
    \mapsto\begin{pmatrix}
    \alpha & \beta\\
    \gamma & \delta
    \end{pmatrix}\dot\otimes
    \begin{pmatrix}
    \phi & 0\\
    0 & \phi^{-1}
    \end{pmatrix}.
\end{equation*}
The subalgebra $B:=A^{\mathrm{co}H}$ of coinvariants is given by the (complex) Podle\'s
sphere $\mathcal{O}_q(\mathbb{S}^2)$ with generators
$B_-=\alpha\beta$, $B_+=\gamma\delta$, $B_0=\beta\gamma$ and
commutation relations
\begin{equation*}
    B_\pm B_0=q^{\pm 2}B_0B_\pm,~~~~~~
    B_-B_+=q^{-2}B_+B_-+q^{-2}(q^{-1}-q)B_0^2.
\end{equation*}
(see \cite{podles} and refs. therein).
It is known that $B\subseteq A$ is a faithfully flat Hopf--Galois extension,
i.e. that $A$ is a principal comodule algebra \cite{MSchneider},
(cf. also \cite{BRZ}~Ex.~6.26). The $3$-dimensional left covariant
FODC on $A$ introduced in \cite{wor3} is $H$-bicovariant. It
induces a $2$-dimensional FODC
on $B$ via the algebra embedding $B\rightarrow A$ and a $1$-dimensional
bicovariant FODC on $H$ via the quotient map
$\pi$. This data is a principal covariant calculus on $A$, see \cite{bm}~Ex.~5.51.                 
\end{example}
\medskip

We next show that for a principal covariant calculus 
on a principal comodule algebra $A$
the Hopf--Galois extension $B=A^{\mathrm{co}H}\subseteq A$ lifts to a
Hopf--Galois extension of graded algebras. Recall that for a bicovariant
$H$-bimodule $\Gamma_H$ we obtain a graded Hopf algebra
$\Omega^{\leqslant 1}_H:=H\oplus\Gamma_H$. The multiplication is given by
that in $H$ and by the $H$-module structure, while the product of degree one
elements is trivial. The
comultiplication $\Delta_{\Omega^{\leqslant 1}_H}\colon\Omega^{\leqslant 1}_H\rightarrow
\Omega^{\leqslant 1}_H\otimes\Omega^{\leqslant 1}_H$ has components on degree zero and
one $\Delta^0:=\Delta\colon H\rightarrow H\otimes H$ and
\begin{equation*}
    \Delta^1:=\Delta_R^{\Gamma_H}+\Delta_L^{\Gamma_H}
    \colon\Gamma_H\rightarrow\Gamma_H\otimes H\oplus H\otimes\Gamma_H~.
\end{equation*} The antipode
$S_{\Omega^{\leqslant 1}_H}\colon\Omega^{\leqslant 1}_H\rightarrow\Omega^{\leqslant 1}_H$
has components $S^0=S\colon H\rightarrow H$ and
\begin{equation*}
\begin{split}
    S^1\colon\Gamma_H&\rightarrow\Gamma_H\\
    \omega&\mapsto-S(\omega_{-1})\omega_0S(\omega_1).
\end{split}
\end{equation*}
\begin{lemma}\label{lemma02}
Consider a principal comodule algebra $A$ with a principal
covariant calculus $(\Gamma_A,\mathrm{d}_A)$. The graded algebra
$\Omega^{\leqslant 1}_A:=A\oplus\Gamma_A$ is a graded right $\Omega^{\leqslant 1}_H$-comodule
algebra with right coaction $\Delta_{\Omega^{\leqslant 1}_A}\colon
\Omega^{\leqslant 1}_A\rightarrow\Omega^{\leqslant 1}_A\otimes\Omega^{\leqslant 1}_H$
given by its
 components $\Delta_{\Omega^{\leqslant 1}_A}^0:=\delta_A\colon A\rightarrow A\otimes H$ and
\begin{equation}\label{deltaom}
    \Delta_{\Omega^{\leqslant 1}_A}^1:=\Delta_R^{\Gamma_A}+\mathrm{ver}
    \colon\Gamma_A\rightarrow\Gamma_A\otimes H\oplus A\otimes\Gamma_H.
\end{equation}
Defining $\Omega^{\leqslant 1}_B:=B\oplus\Gamma_B\subseteq \Omega^{\leqslant
  1}_A$, we further have that
$\Omega^{\leqslant 1}_B=(\Omega^{\leqslant 1}_A)^{\mathrm{co}\Omega^{\leqslant 1}_H}
$ is the subalgebra of $\Omega^{\leqslant 1}_H$-coinvariants.
\end{lemma}
\begin{proof}
By assumption $\Delta_{\Omega^{\leqslant 1}_A}^0=\delta_A\colon A\rightarrow A\otimes H
\subseteq A\otimes\Omega^{\leqslant 1}_H$ is a right  $\Omega^{\leqslant
  1}_H$-coaction and an algebra map. Similarly also 
$\Delta_{\Omega^{\leqslant 1}_A}$ is an $\Omega^{\leqslant
  1}_H$-coaction and an algebra map. For the coaction property we just have to prove 
$(\Delta_{\Omega^{\leqslant 1}_A}\otimes\mathrm{id})\circ 
\Delta_{\Omega^{\leqslant 1}_A}=( \mathrm{id}\otimes\Delta_{\Omega^{\leqslant 1}_H}) \circ \Delta_{\Omega^{\leqslant 1}_A}$
 on
$1$-forms. This is a straightforward computation using the right
$H$-covariance of $(\Gamma_A,\mathrm{d}_A)$, the definition of the
vertical map and the bicovariance of 
$(\Gamma_H,\mathrm{d}_H)$.
 The coaction $\Delta_{\Omega^{\leqslant 1}_A}$ is an algebra map because for all $a,a'\in A$ and $\omega\in\Gamma_A$ we have
\begin{equation*}
    \Delta_{\Omega^{\leqslant 1}_A}(a\cdot\omega\cdot a')
    =\delta_A(a)\Delta_{\Omega^{\leqslant 1}_A}(\omega)\delta_A(a')
    =\Delta_{\Omega^{\leqslant 1}_A}(a)\Delta_{\Omega^{\leqslant 1}_A}(\omega)
    \Delta_{\Omega^{\leqslant 1}_A}(a')~,
\end{equation*}
where we used that $\Gamma_A$ is a right $H$-covariant $A$-bimodule and that the
vertical map satisfies the compatibility condition (\ref{eq133}).

The algebra inclusion $B\subseteq A$ implies the graded algebra one
$\Omega^{\leqslant 1}_B:=B\oplus\Gamma_B\subseteq \Omega^{\leqslant 1}_A$.
The equality $\Omega^{\leqslant 1}_B=(\Omega^{\leqslant 1}_A)^{\mathrm{co}\Omega^{\leqslant 1}_H}
$  is obvious in degree zero. In degree one,
recalling  definition \eqref{deltaom} and that the codomain of $\Delta_{\Omega^{\leqslant 1}_A}$
is a direct sum, the coinvariance condition
$
\Delta_{\Omega^{\leqslant 1}_A}(\omega)=\omega\otimes 1$ implies 
$\Delta_R^{\Gamma_A}(\omega)=\omega\otimes 1$ and
$\mathrm{ver}(\omega)=0$. These equalities respectively imply
$\omega\in \Gamma_A^{\mathrm{co}H}$ and  $\omega\in
\Gamma_A^\mathrm{hor}$, this latter condition following from exactness of the
sequence \eqref{eq134}. 
Then, from Theorem~\ref{prop-ff} we obtain
$\Gamma_B=\Gamma^{\mathrm{hor}}\cap\Gamma^{\mathrm{co}H}$, which
implies
$(\Gamma_A)^{\mathrm{co}\Omega^{\leqslant 1}_H}
=\Gamma_B$. In conclusion,
$(\Omega^{\leqslant 1}_A)^{\mathrm{co}\Omega^{\leqslant 1}_H}
=\Omega^{\leqslant 1}_B$.
\end{proof}

In the following we prove that
principal covariant calculi  are equivalent to graded Hopf--Galois
extensions with compatible differentials.
\begin{theorem}\label{thm03}
Let $A$ be a principal comodule algebra and $(\Gamma_A,\mathrm{d}_A)$ a
principal covariant calculus on $A$ with corresponding bicovariant
FODC $(\Gamma_H,\mathrm{d}_H)$ on $H$.
Then
\begin{equation}\label{GradedHG}
    \Omega^{\leqslant 1}_B=(\Omega^{\leqslant 1}_A)^{\mathrm{co} \Omega^{\leqslant 1}_H}
    \subseteq\Omega^{\leqslant 1}_A
\end{equation}
is a faithfully flat Hopf--Galois extension of graded algebras.
Moreover we have the following commutative diagram
\begin{equation}\label{GradedCoactDiag}
\begin{tikzcd}
\Gamma_A
\arrow{rr}{\Delta^1_{\Omega^{\leqslant 1}_A}}
& & \Gamma_{A\otimes H}\\
A \arrow{u}{\mathrm{d}_A}
\arrow{rr}{\delta_A}
& & A\otimes H
\arrow{u}[swap]{\mathrm{d}_{A\otimes H}}
\end{tikzcd}
\end{equation}
where we endow $A\otimes H$ with the tensor product calculus
$(\Gamma_{A\otimes H},\mathrm{d}_{A\otimes H})$.

Conversely, if $(\Gamma_A,\mathrm{d}_A)$ is a FODC on a right $H$-comodule 
algebra $A$ and 
$(\Gamma_H,\mathrm{d}_H)$ a bicovariant FODC on $H$ such that (\ref{GradedHG}) is
a faithfully flat Hopf--Galois extension of graded algebras and
the diagram in (\ref{GradedCoactDiag}) commutes, then $A$ is a principal comodule algebra
and $(\Gamma_A,\mathrm{d}_A)$ a principal covariant calculus on $A$.
\end{theorem}
\begin{proof}
If $(\Gamma_A,\mathrm{d}_A)$ is a principal covariant calculus on
$A$
$B=A^{\mathrm{co}H}\subseteq A$ is a  faithfully
flat Hopf--Galois extension, the sequence \eqref{eq134} is exact and 
by Lemma~\ref{lemma02} $\Omega^{\leqslant 1}_A$ is a graded
right $\Omega^{\leqslant 1}_H$-comodule algebra.  Then from \cite{SchHGFF}~Cor.~5.9,
$\Omega^{\leqslant 1}_B=(\Omega^{\leqslant 1}_A)^{\mathrm{co}\Omega^{\leqslant 1}_H}
\subseteq\Omega^{\leqslant 1}_A$ is a Hopf--Galois extension which is
faithfully flat as a left $\Omega^{\leqslant 1}_B$-module.
Moreover, the diagram (\ref{GradedCoactDiag}) commutes
by the right $H$-covariance of $(\Gamma_A,\mathrm{d}_A)$ and the
definition (\ref{ver}) of the vertical map.

Conversely, if $A$ is a right $H$-comodule algebra with a FODC
$(\Gamma_A,\mathrm{d}_A)$ and a  bicovariant FODC $(\Gamma_H,\mathrm{d}_H)$ on $H$
such that (\ref{GradedHG}) is a faithfully flat Hopf--Galois extension (so
that in particular  $B\subseteq A$ is a faithfully
flat Hopf--Galois extension) then 
from \cite{SchHGFF}~Cor.~5.9 the sequence
\begin{equation}\label{ProofSeq}
    0\rightarrow A\otimes_B\Gamma_B\rightarrow\Gamma_A\xrightarrow{\Theta}
    A\square_H\Gamma_H\rightarrow 0
\end{equation}
is exact, where $\Theta$ is the projection of
$\Delta^1_{\Omega^{\leqslant 1}_A}\colon \Gamma_A\rightarrow
\Omega^{\leqslant 1}_A\otimes\Omega^{\leqslant 1}_H$ to 
$A\otimes\Gamma_H$. If we assume
commutativity of the diagram in (\ref{GradedCoactDiag}) it follows that
\begin{equation*}
    \Delta_R^{\Gamma_A}(\mathrm{d}_Aa)
    +\Theta(\mathrm{d}_Aa)
    =\mathrm{d}_A(a_0)\otimes a_1
    +a_0\otimes\mathrm{d}_Ha_1
\end{equation*}
for all $a\in A$, which implies
$\Delta_R^{\Gamma_A}(\mathrm{d}_Aa)=\mathrm{d}_A(a_0)\otimes a_1$
and
$\Theta(\mathrm{d}_Aa)=a_0\otimes\mathrm{d}_Ha_1$
for all $a\in A$. Since $\Theta$ and $\Delta_R^{\Gamma_A}$ are $A$-bilinear
(where $A\otimes H$ is an $A$-bimodule via $\delta_A$) the first equality
implies that $(\Gamma_A,\mathrm{d}_A)$ is a right
$H$-covariant calculus, the second one that $\Theta$ is the vertical map.
Thus (\ref{ProofSeq}) is the exact sequence showing the principality
of  the differential calculus  $(\Gamma_A,\mathrm{d}_A)$.
\end{proof}

\section{A sheaf-theoretic approach to noncommutative calculi}
\label{sheaf-sec}

In this section we introduce differential calculi on quantum principal bundles  in a two-step process. First we define a covariant calculus on a sheaf of
comodule algebras $\mathcal{F}$ as a sheaf $\Upsilon$ of
$\mathcal{F}$-bimodules together
with a morphism $\mathrm{d}\colon\mathcal{F}\rightarrow\Upsilon$ of sheaves,
requiring locally Leibniz rule and a surjectivity condition.
In case $\mathcal{F}$ is a quantum principal bundle we demand an additional
local compatibility of the calculi on the total sheaf, the base sheaf and the
structure Hopf algebra.

\subsection{Quantum principal bundles as sheaves}\label{qb-sh}

We start by recalling the main notions and results of \cite{AFL}.
Those concern the (function) algebra level. In the remaining sections
we generalize the definitions and findings to FODCi.
\medskip

A \textit{quantum ringed space} $(M , \mathcal{O}_M)$ is
a pair consisting of a classical topological space $M$ and 
a sheaf $\mathcal{O}_M$ on $M$ of noncommutative algebras.
We call a sheaf of $H$-comodule algebras $\cF$
an $H$-\textit{principal bundle} 
or \textit{quantum principal bundle} 
(QPB) over $(M,\cO_M)$
if there exists an open covering 
$\{U_i\}$ of $M$ such that:
\begin{enumerate}
\item[i.)] 
  $\cF(U_i)^{\mathrm{co}H}=\cO_M(U_i)$,
\item[ii.)] $\cF$ is \textit{locally principal}, that is, 
$\mathcal{O}_M(U_i)\subseteq\cF (U_i )$ is a principal comodule
  algebra. 
\end{enumerate}
If the base ring is a field local principality is equivalent to
faithfully flatness of the local Hopf--Galois extensions.
If these Hopf--Galois extensions on the open cover $\{U_i\}$ are cleft or trivial
we say that the QPB is \textit{locally cleft} or \textit{locally trivial},
respectively.

 \begin{remark}[Principal comodule algebras restrict locally] \label{CiPa}
In \cite{cipa}~Lem.~4.1 it is shown that the pushforward of a
strong connection $\ell: H\to A\otimes A$ by a comodule algebra map
$\phi: A\to A'$ is a strong connection $(\phi\otimes \phi)\circ \ell$
on $A'$. Recalling that if the base ring $\Bbbk$ is a field there is a
bijective correspondence of strong connections on comodule algebras
with principal comodule algebras (cf. the paragraph after Definition
\ref{PrinComAl}) it follows that the existence of a comodule algebra map
$\phi: A\to A'$ implies that if $A$ is a principal comodule algebra, so is $A'$. 

Since $\cF$ is a sheaf of $H$-comodule algebras, for any inclusion
$U\subset U_i$ of an open $U$ of the topology of $M$ in an open 
$U_i$ of the covering
$\{U_i\}$, the restriction $r^{}_{U U_i}:\mathcal{F}(U_i)\to
\mathcal{F}(U)$, which by definition is a comodule algebra map, then
implies that $\mathcal{F}(U)$ is a principal comodule algebra.

Let now $\mathcal{F}_p$ be the stalk of $\mathcal{F}$ at $p\in M$.
Choosing $U_i$ such that $p\in U_i$ and considering the canonical map
$\mathcal{F}(U_i)\to\mathcal{F}_p$ of right $H$-comodule algebras,
defined by $s\mapsto [(U_i,s)]_p$, where the equivalence class $[(U_i,s)]_p\in \mathcal{F}_p$ is the
germ at $p$ of the section $s\in\mathcal{F}(U_i)$, we obtain
that the stalk $\mathcal{F}_p$ is a principal comodule algebra.
\end{remark}
Our main examples of QPBs have a quantum group as total space.
Let $G$ be a complex semisimple algebraic group and
$P$ a parabolic subgroup.
The quotient $G/P$ is a projective variety and the projection
$G \lra G/P$ is a principal bundle. 
Let $\cO_q(G)$ and $\cO_q(P)=\cO_q(G)/I_P$ be Hopf algebras
over $\C_q:=\C[q,q^{-1}]$,
quantizations of $\cO(G)$ and $\cO(P)$, the coordinate
algebras of $G$ and $P$, respectively (see \cite{AFL} Sect.~3,
\cite{cfg} Sect.~3). We shall later specialize $q$ to a complex
  number in order to consider Hopf--Galois extensions with  $\C$ the base ring.
Once we fix a projective embedding for $G/P$, obtained
through the global sections of a very ample line bundle $\cL$, we have
the graded algebra $\cO(G/P)$.
We say that $s\in \cO_q(G)$ is a {\it quantum section} if
$$
(\mathrm{id} \otimes \pi) \Delta(s) = s \otimes \pi(s), \qquad
s \equiv t \quad \hbox{mod} \, (q-1),
$$
where $\pi: \cO_q(G) \lra \cO_q(P)$,  
$\Delta$ is the coproduct of $\cO_q(G)$ and $t$ is a {\sl classical
section}, that is, it lifts to $\cO(G)$ the character of $P$ defining $\cL$
(see \cite{AFL} Def.~3.6).

Denote by $\{s_i\}_{i\in \cI}$ a choice of
linearly independent elements in $\Delta(s)=\sum_{i\in
\cI} s^i \otimes s_i$, that we assume to generate the homogeneous
coordinate ring $\cO_q(G/P)$ quantization of the commutative homogeneous
coordinate ring $\cO(G/P)$ ($\cL$ is very ample).
Assume furthermore that
$S_i=\{s_i^r, r \in \Z_{\geq 0}\}$ is
Ore, denote by $\cO_q(G)S_i^{-1}$ the Ore localization, and 
assume that subsequent Ore localizations do not depend on the order.
In the specialization for $q=1$, the $s_i$'s define an open cover
of $G$, whose projection on $M=G/P$ gives an open cover $\{U_i\}$ of
$M$ (see \cite{AFL}, Sect.~4 for more details). 
Let us define the open sets
$$
U_I:= U_{i_1} \cap \dots \cap U_{i_r}, \qquad I=(i_1, \dots , i_r)
$$
with $r=0,1,2,\ldots n$ (here $n$ is the  cardinality of $\mathcal{I}$
and the empty set $\varnothing$ corresponds to
$r=0$). We consider on $M$ the topology generated by the open sets $U_I$,
which hence form a basis ${\mathcal{B}}=\{U_I\}_{I\in\mathcal{II}}$, where $\mathcal{II}$ is
the set of ordered multi-indices
$I=(i_1,\dots, i_r)$, $1\leq i_1 < \dots < i_r \leq n$ with
  $r=1,2,\ldots n$ (and we also consider the case $r=0$ corresponding to the empty set).

\medskip
The main result in \cite{AFL} is Theorem 4.8 that states the following.

\begin{theorem}\label{main-afl}
Let the notation be as above.
Then
\begin{enumerate}
\item[i.)] the assignment
\begin{equation}\label{eq116}
    U_I \mapsto \cO_M(U_I):=
   {\C}_q[s_{k_1}s_{i_1}^{-1}, \dots, s_{k_r}s_{i_r}^{-1}; 1\leq k_1\leq
    n,\ldots 1\leq k_r\leq n] 
    \subset  \cO_q(G)S_{i_1}^{-1}\dots S_{i_r}^{-1}
\end{equation}
defines a sheaf $\cO_M$ on $M=G/P$,

\item[ii.)]
the assignment
\begin{equation}\label{eq115}
    U_I \mapsto \cF_G(U_I):=\cO_q(G)S_{i_1}^{-1} \dots S_{i_r}^{-1},\
    \qquad I=(i_1, \dots, i_r)
\end{equation}
defines a sheaf $\cF_G$ of right $\cO_q(P)$-comodule algebras 
on the topological space $M$,

\item[iii.)]  
$\cF_G^{\coi \cO_q(P)}=\cO_M$, i.e.,  the subsheaf $\cF_G^{\coi
    \cO_q(P)}: U\to \cF_G(U)^{\coi
    \cO_q(P)}\subset \cF_G(U)$ is canonically 
isomorphic to the sheaf $\cO_M$.
\end{enumerate}
\end{theorem}
\begin{remark} We clarify the sheafification implicitly understood in \cite{AFL}  Thm.~4.8.
The restriction morphisms
\begin{equation}\label{rIJ}
\begin{split}
    r^M_{IJ}\colon&\mathcal{O}_M(U_J)\rightarrow\mathcal{O}_M(U_I)\\
    r_{IJ}\colon&\mathcal{F}_G(U_J)\rightarrow\mathcal{F}_G(U_I),
\end{split}
\end{equation}
$U_I\subseteq U_J$ (i.e. $J\subseteq I$),
for $\mathcal{O}_M$ and $\mathcal{F}_G$ are given by
the natural morphisms. Thus, (\ref{eq116}) and (\ref{eq115}) determine
presheaves $\mathcal{O}_M$ and $\mathcal{F}_G$ on the basis $\mathcal{B}$.

Define
$\cF_G^{\mathrm{co}\cO_q(P)}(U_I):=\cF_G(U_I)^{\mathrm{co}\cO_q(P)}$
for all $I$ and define the restriction morphisms 
\begin{equation}\label{eq130IJ}
    r^{\mathrm{co}H}_{IJ}:=r_{IJ}|_{\mathcal{F}_G^{\mathrm{co}H}(U_J)}
    \colon\mathcal{F}_G^{\mathrm{co}H}(U_J)
    \rightarrow\mathcal{F}_G^{\mathrm{co}H}(U_I)~,
\end{equation}
which are  well-defined
because  $H$-colinearity of the  $r_{IJ}$ in \eqref{rIJ} implies
$r_{IJ}(\mathcal{F}_G^{\mathrm{co}H}(U_J))\subseteq\mathcal{F}_G^{\mathrm{co}H}(U_I)$.
The restriction morphisms $r^{\mathrm{co}H}_{IJ}$ inherit the $\mathcal{B}$-presheaf
properties from the $r_{IJ}$, so that $\cF_G^{\mathrm{co}\cO_q(P)}$ is
a  $\mathcal{B}$-presheaf.
Using the technique of \cite{AFL} Prop.~4.7, one can prove the $\mathcal{B}$-presheaf 
equality $\cF_G^{\mathrm{co}\cO_q(P)}
=\cO_M$.
Using the following observation these 
$\mathcal{B}$-presheaves are extended to sheaves on the full
topological space, these are the sheaves of Theorem \ref{main-afl}.
\end{remark}
\begin{observation}\label{bsheaf}
If we have a presheaf $\cG_\cB$ defined on a basis $\cB$ for the
topology, we can always extend it to the presheaf $\cG$, where
$\cG(U):=\varprojlim \cG_\cB(V)$ for $V \subset U$,
$V \in \cB$, provided the target category admit inverse limits 
(see \cite{egaI} Chpt.~0, Sect.~3.2.1).
If $\mathcal{B}$ is finite the existence of $\varprojlim \cG_\cB(V)$
is granted.
Then, once we have a presheaf, we can always proceed to the sheafification
and obtain a sheaf on the topology with the same target category
(groups, $H$-comodule algebras, etc).
By universality, a morphism of presheaves on a basis
will extend uniquely first to a morphisms of presheaves and then
to a morphisms of their sheafification (cf. \cite{ha}  Chpt.~2 and also \cite{eh} for
more details on this standard construction).
To ease the notation, we shall use the same letter to denote
a presheaf on $\cB$, its extension to a presheaf and the sheafification.
\end{observation}

The topology of $M$ used in Theorem \ref{main-afl} is finite since
it is  induced by a finite cover $\{U_i\}_{i\in\mathcal{I}}$
of $M$ via the  finite basis $\mathcal{B}=\{U_I\}^{}_{I\in \mathcal{I}\mathcal{I}}$.
We can then define the \textit{minimal opens}
\begin{equation}\label{minop}
  U_p:=\mbox{$\bigcap^{}_{U_I\ni p\,}$}U_I\in  \mathcal{B}~.
\end{equation}
Given a presheaf $\mathcal{F}$ on $M$, its
stalk at $p\in M$ coincides with its sections on $U_p$, i.e. 
$\mathcal{F}_p=\mathcal{F}(U_p)$. This is immediate from the
definition of stalk 
since any open neighbourhood of $p$ includes $U_p$.
Now the stalks of the sheafification of $\mathcal{F}$ coincide with
the stalks of the  initial presheaf, and so $\mathcal{F}$ coincides
with its sheafification on the opens $U_p$.

\subsection{Covariant calculi on sheaves of comodule algebras}

We now give the definition of a covariant FODC
on a sheaf of comodule algebras. The bimodule property is entirely captured in
a sheaf-theoretic language, while we demand the Leibniz rule and
surjectivity property
of the differential on stalks.
To account for coinvariance
we have to consider the differential as a morphism of sheaves of comodules. 
We give some fundamental examples of covariant FODCi on sheaves and 
discuss the sheafs of base forms, horizontal forms
and coinvariant forms.

Given a morphism $\varphi\colon\mathcal{F}\rightarrow\mathcal{G}$ of sheaves on $M$ we denote the induced morphism on the stalks at $p\in M$ by
$\varphi_p\colon\mathcal{F}_p\rightarrow\mathcal{G}_p$.
We also recall that, for $\mathcal{F}$ a sheaf of algebras on a topological space $M$, 
a sheaf of $\mathcal{F}$-modules associates to each open $U$ in $M$ an 
$\mathcal{F}(U)$-module with compatible restriction maps.

\begin{definition} \label{fodc-def}
Let $\mathcal{F}$ be a sheaf of algebras on a topological space $M$.
A \textit{FODC on $\cF$} is 
a sheaf $\Upsilon$ of $\cF$-bimodules on $M$
with a morphism
\begin{equation*}
    \mathrm{d}\colon\cF\lra\Upsilon
\end{equation*}
of sheaves, such that for all $p\in M$ the induced maps on stalks $\mathrm{d}_p\colon\mathcal{F}_p\rightarrow
\Upsilon_p$ satisfy
\begin{enumerate}
\item[\it {i.)}] the Leibniz rule
$\mathrm{d}_p(fg)=(\mathrm{d}_pf)g+f\mathrm{d}_pg\hbox{ for all }
f,g\in \cF_p$

\item[\it{ii.)}] the surjectivity condition $\Upsilon_p=\cF_p\mathrm{d}_p\cF_p$.
\end{enumerate}
If $H$ is a Hopf algebra and $\mathcal{F}$ a sheaf of right $H$-comodule
algebras over $M$ we call a FODC $(\Upsilon,\mathrm{d})$ on $\mathcal{F}$
\textit{right $H$-covariant} if $\Upsilon$ is a sheaf of right
$H$-covariant $\mathcal{F}$-bimodules and $\mathrm{d}$ is a morphism of
sheaves of right $H$-comodules.
\end{definition}
Similarly one defines left $H$-covariant and $H$-bicovariant FODCi on
sheaves of left $H$-comodule algebras and $H$-bicomodule algebras,
respectively.

\begin{example}
\begin{enumerate}
\item[i.)] Let $M$ be an algebraic variety and $G$ an
  {(affine)} algebraic group acting on $M$.
Then, $\cO_M$, the structural sheaf of $M$, carries an $H=\cO(G)$ coaction, where
$\cO(G)$ denotes the global sections of the structural sheaf of $G$, which
carries a natural Hopf algebra structure.
Define $\Omega$ as the sheaf of K\"ahler differentials with $ \mathrm{d}\colon\cO_M \lra \Omega$
as in \cite{ha},  Sec. 8, II. As one can readily check, using the results
in \cite{ha}, $(\Omega,\mathrm{d})$ is a FODC on $\cO_M$.
 
\item[ii.)] Let the algebraic variety
$M=G$ be a simple complex algebraic group and let
$P$ be a parabolic subgroup of $G$. 
We can view the principal bundle $\pi\colon G \lra G/P$ in the
sheaf-theoretic language.
An example of FODC is again given by the sheaf of K\"ahler
differentials on $G$. Since $G$ is a principal bundle, we can also
define base forms as the differential forms over $G/P$: they are obtained
by considering horizontal forms invariant under the natural $P$ action. We 
are going to give a quantum version of this K\"ahler differential
construction in Section~\ref{DConSheaves}. The explicit example
of $\mathcal{O}_q(\mathrm{SL}_2(\mathbb{C}))$ is discussed in 
Section~\ref{sectionQDCProjSpace}.
\end{enumerate}
\end{example}

We now turn to examine base, horizontal and coinvariant forms in the
sheaf-theoretic context.

\medskip
Let $\mathcal{F}$ be a sheaf of right $H$-comodule algebras.
As for the $\mathcal{B}$-presheaf
  $\cF_G^{\mathrm{co}\cO_q(P)}$, we define
the presheaf $\mathcal{F}^{\mathrm{co}H}$ of algebras by assigning to
every open $U$ of $M$ the algebra
$
\mathcal{F}^{\mathrm{co}H}(U):=
\mathcal{F}(U)^{\mathrm{co}H}$ with restriction morphisms, cf. \eqref{eq130IJ},
\begin{equation}\label{eq130}
    r^{\mathrm{co}H}_{VU}:=r_{VU}|_{\mathcal{F}^{\mathrm{co}H}(U)}
    \colon\mathcal{F}^{\mathrm{co}H}(U)
    \rightarrow\mathcal{F}^{\mathrm{co}H}(V)~.
\end{equation} 
The presheaf $\mathcal{F}^{\mathrm{co}H}$ is a subsheaf of $\mathcal{F}$,
indeed it is the kernel of the sheaf morphism $\delta_R-\mathrm{id}\otimes 1_H\colon
\mathcal{F}\rightarrow\mathcal{F}\otimes H$.

\begin{definition}
Let $(\Upsilon,\mathrm{d})$ be a FODC on a sheaf 
$\mathcal{F}$ of right $H$-comodule algebras over $M$. For $U$ open in $M$
we define the $\mathcal{F}^{\mathrm{co}H}(U)$-subbimodules of
\begin{alignat*}{2}
  &\textit{base forms} &&\qquad\qquad\qquad\Upsilon_M(U):=\mathcal{F}^{\mathrm{co}H}(U)\mathrm{d}
  \mathcal{F}^{\mathrm{co}H}(U)~,\\
    &\textit{horizontal forms} &&\qquad\qquad\qquad\Upsilon^\mathrm{hor}(U) 
    :=\mathcal{F}(U) \Upsilon_M (U)~.
\end{alignat*}
If $(\Upsilon,\mathrm{d})$ is a right $H$-covariant FODC on
$\mathcal{F}$ we define the $\mathcal{F}^{\mathrm{co}H}(U)$-subbimodule of
\begin{equation*}
    \quad\qquad\qquad\textit{right $H$-coinvariant forms} \qquad\Upsilon^{\mathrm{co}H} (U)
    :=\{\omega\in\Upsilon(U) ~|~\Delta_R^{U}(\omega)=\omega\otimes 1\}~,
\end{equation*}
where  $\Delta_R^{U}\colon\Upsilon(U)
\rightarrow\Upsilon(U)\otimes H$
is the right $H$-coaction on $\Upsilon(U)$.
\end{definition}

\begin{proposition}\label{prop08}
The assignments \begin{equation*}
    \Upsilon_M\colon U\mapsto\Upsilon_M(U)~,
    \qquad
    \Upsilon^\mathrm{hor}\colon U\mapsto\Upsilon^\mathrm{hor}(U)~,
    \qquad
    \Upsilon^{\mathrm{co}H}\colon U\mapsto\Upsilon^{\mathrm{co}H}(U)~,
\end{equation*}
with restriction morphisms
\begin{equation}\label{eq111}
\begin{split}
    r^{\Upsilon_M}_{VU}&:=r^\Upsilon_{VU}|_{\Upsilon_M(U)}
    \colon\Upsilon_M(U)\rightarrow\Upsilon_M(V)~,\\
    r^\mathrm{hor}_{VU}&:=r^\Upsilon_{VU}|_{\Upsilon^\mathrm{hor}(U)}
    \colon\Upsilon^\mathrm{hor}(U)\to\Upsilon^\mathrm{hor}(V)~,\\
    r^{\Upsilon^{\mathrm{co}H}}_{VU}&:=r^\Upsilon_{VU}|_{\Upsilon^{\mathrm{co}H}(U)}
    \colon\Upsilon^{\mathrm{co}H}(U)\rightarrow\Upsilon^{\mathrm{co}H}(V)~,
\end{split}
\end{equation}
where $r^\Upsilon_{VU}\colon\Upsilon(U)\rightarrow\Upsilon(V)$
denotes the restriction morphism of $\Upsilon$,
define sheaves of $\mathcal{F}^{\mathrm{co}H}$-bimodules.
\end{proposition}
\begin{proof}
We first prove that the maps in (\ref{eq111}) are well-defined,
that is they map in the claimed codomains (which are submodules of $\Upsilon(V)$).
We show for example
$r^{\Upsilon_M}_{VU}(\Upsilon_M(U))\subseteq\Upsilon_M(V)$.
Let $f^i,g_i\in\mathcal{F}^{\mathrm{co}H}(U)\subseteq\mathcal{F}(U)
$ we have
$$
r^{\Upsilon_M}_{VU}(f^i\mathrm{d}_Ug_i)
=r^{\Upsilon}_{VU}(f^i\mathrm{d}_Ug_i)
=r_{VU}(f^i)r^{\Upsilon}_{VU}(\mathrm{d}_Ug_i)
=r_{VU}^{\mathrm{co}H}(f^i)\mathrm{d}_Vr_{VU}^{\mathrm{co}H}(g_i)
\in\Upsilon_M(V)~,
$$
where we used that $r^\Upsilon$ is left $\mathcal{F}$-linear, $\mathrm{d}$
is a morphism of sheaves and \eqref{eq130}. Similarly  $r^\mathrm{hor}_{VU}$ is
 well-defined. The proof for $r^{\Upsilon^{\mathrm{co}H}}_{VU}$ is
 straightforward from  $H$-colinearity of $r^{\Upsilon}_{VU}$.
The maps in (\ref{eq111}) are
$\mathcal{F}^{\mathrm{co}H}(U)$-bimodule maps (the
$\mathcal{F}^{\mathrm{co}H}(U)$-bimodule structure on the images being
given via     $r^{\mathrm{co}H}_{VU}$) because they are
restrictions of the  $\mathcal{F}(U)$-bimodule map
$r^\Upsilon_{VU}$.
Finally, the morphisms in  (\ref{eq111}) for $V,U$ open in $M$ define
the presheaves $\Upsilon_M$,  $\Upsilon^\mathrm{hor}$,
$\Upsilon^{\mathrm{co}H}$ since they are restrictions of the morphisms $r^\Upsilon_{VU}$
defining the presheaf $\Upsilon$.
Since $\Upsilon^{\mathrm{co}H}$ is the kernel of the sheaf morphism
$\Delta_R-\mathrm{id}\otimes 1_H\colon\Upsilon\rightarrow\Upsilon\otimes H$, it is a sheaf.
By common abuse of notation
we shall denote by
$\Upsilon_M$ and $\Upsilon^\mathrm{hor}$ the sheafifications of the corresponding presheaves. These are sheaves in the category of $\mathcal{F}^{\mathrm{co}H}$-bimodules.
\end{proof}

\begin{corollary}\label{basef-cor}
Let $(\Upsilon,\mathrm{d})$ be a FODC on a sheaf 
$\mathcal{F}$ of right $H$-comodule algebras over $M$.
Then $(\Upsilon_M,\mathrm{d}_M)$ with 
$\mathrm{d}_M:=\mathrm{d}|_{\mathcal{F}^{\mathrm{co}H}}
\colon\mathcal{F}^{\mathrm{co}H}\rightarrow\Upsilon_M$ is a FODC on
$\mathcal{F}^{\mathrm{co}H}$.
\end{corollary}

\begin{proof}
We already noted that the presheaf $\mathcal{F}^{\mathrm{co}H}$
is a subsheaf of $\mathcal{F}$. Since $\mathrm{d}_M$ is the restriction
of $\mathrm{d}\colon\mathcal{F}\rightarrow\Upsilon$ to $\mathcal{F}^{\mathrm{co}H}$
it is a morphism of sheaves and satisfies the Leibniz
rule on stalks.
On the level of presheaves we have the surjectivity condition on stalks
 $\mathcal{F}^{\mathrm{co}H}_p\,
(\mathrm{d}_M)^{}_p\!\:\mathcal{F}^{\mathrm{co}H}_p=(\Upsilon_M)_p$, for any $p\in M$.
Since the stalks of the presheaf coincide with the stalks of its
sheafification we conclude that the surjectivity condition of the
sheaf morphism $\mathrm{d}_M$ is satisfied (cf. the analogue of  {\it{ii.)}} in Definition
\ref{fodc-def}).
\end{proof}

Let $\Upsilon^{\mathrm{co}H}\cap\Upsilon^\mathrm{hor}$ denote the sheafification
of the presheaf obtained intersecting the subsheaves
$\Upsilon^{\mathrm{co}H}$ and $\Upsilon^\mathrm{hor}$ of $\Upsilon$.

\begin{theorem}\label{thmcharbase}
Let the base ring $\Bbbk$ be a field. For any right $H$-covariant FODC $(\Upsilon,\mathrm{d})$ on a QPB
$\mathcal{F}$ 
we have an isomorphism 
$$
\Upsilon_M\cong\Upsilon^{\mathrm{co}H}\cap\Upsilon^\mathrm{hor}
$$
of sheaves of $\mathcal{O}_M$-bimodules.
\end{theorem}
\begin{proof}
The natural inclusion $\Upsilon_M\to\Upsilon^{\mathrm{co}H}\cap\Upsilon^\mathrm{hor}$
(by which we mean the sheafification of the corresponding morphism of presheaves)
is a morphism of sheaves. Recalling from Remark \ref{CiPa} that stalks are
principal comodule algebras, the induced inclusion on stalks,
$(\Upsilon_M)^{}_p\to\Upsilon^{\mathrm{co}H}_p\cap\Upsilon^\mathrm{hor}_p$,
is an isomorphism thanks to Theorem \ref{prop-ff}. Thus
$\Upsilon_M\to\Upsilon^{\mathrm{co}H}\cap\Upsilon^\mathrm{hor}$ is  an isomorphism of sheaves.
\end{proof}

\subsection{Ore extension of covariant calculi}\label{SectionOre}

As a tool for the following sections, we now discuss the Ore extension
of covariant calculi.
We recall that if $a \in A$ is a (right) Ore element, we denote by
$AS^{-1}$ or by $A[a^{-1}]$ the
Ore localization of $A$ at the multiplicative 
right Ore set $S=\{a^m\}_{m \in \Z_\geq 0}$
(for more details see  \cite{lam} Chpt.~4, Sect.~10 
and \cite{kassel} Sect.~I.7.).

\medskip
Let $(\Gamma,\mathrm{d})$ be a FODC 
on an algebra $A$. 
Let $a$ be an Ore element in $A$. We define the $A[a^{-1}]$-bimodule
\begin{equation}\label{eqGAGA}
\Gamma_a:=A[a^{-1}] \, \Gamma \, A[a^{-1}]:=A[a^{-1}]\otimes_A\Gamma\otimes_AA[a^{-1}]
\end{equation}
and the $\Bbbk$-linear map
\begin{equation}\label{eq110}
    \mathrm{d}_a:A[a^{-1}] \lra \Gamma_a, \qquad \mathrm{d}_a(a')=\begin{cases} \mathrm{d}a' & a' \in A \\
    -a^{-1}\, \mathrm{d}a \, a^{-1} & a'=a^{-1} \end{cases},
\end{equation}
where we extend $\mathrm{d}_a$ to $A[a^{-1}]$ by the Leibniz rule.

\begin{lemma} \label{local-lemma}
$(\Gamma_a,\mathrm{d}_a)$ is a FODC on $A[a^{-1}]$.
\end{lemma}

\begin{proof}
We need to verify the properties of Definition~\ref{def01}.
The first two properties are satisfied by definition. Moreover
by the equality
$$
a'\,\mathrm{d}a''\,a^{-n}=a'  \,\mathrm{d}_a(a''a^{-n})-a' \, a'' \, \mathrm{d}_a(a^{-n}),
$$
where $a',a''\in A$ and $n\in\mathbb{N}$,
we see that we can express any element of $\Gamma_a$ as 
an element in $A[a^{-1}] \, \mathrm{d}_a A[a^{-1}]$.
\end{proof}

With a slight abuse of terminology, we call $(\Gamma_a,\mathrm{d}_a)$ 
the \textit{localization} of $(\Gamma,\mathrm{d})$ via the Ore
element $a\in A$. 

We now show that the covariant properties of the FODC $\Gamma$ are
preserved under localization.

\begin{lemma}\label{local-lemma2}
Let $(\Gamma,\mathrm{d})$ be a right $H$-covariant FODC on a 
right $H$-comodule algebra $(A,\delta_R)$ and $a\in A$ an Ore element.
Assume that $\delta_R(a)\in A\otimes H$ is invertible. Then
\begin{enumerate}
\item[i.)] $\delta_R$ extends to a
right $H$-coaction $\delta^a_R\colon A[a^{-1}]\rightarrow A[a^{-1}]\otimes H$,
structuring $(A[a^{-1}],\delta_R^a)$ as a right $H$-comodule algebra;

\item[ii.)] the FODC $(\Gamma_a,\mathrm{d}_a)$ on $(A[a^{-1}],\delta_R^a)$
is right $H$-covariant;
\end{enumerate}
\end{lemma}

\begin{proof}
\begin{enumerate}
\item[i.)] We set $\delta_R^a(a'):=\delta_R(a')$ for all $a'\in A$, furthermore
$\delta_R^a(a^{-1}):=\delta_R(a)^{-1}$ and extend $\delta_R^a$ as an algebra homomorphism
to $A[a^{-1}]$. From the coaction properties of $\delta_R$ it follows that $\delta_R^a$
is a right $H$-coaction on $A[a^{-1}]$ and by construction it is compatible with the 
algebra structure of $A[a^{-1}]$.

\item[ii.)] There is a canonical right $H$-coaction $\Delta_R^a$ on
$\Gamma_a:=A[a^{-1}]\otimes_A\Gamma\otimes_AA[a^{-1}]$ given by the diagonal coaction.
We show that $\Delta_R^a$ structures $\Gamma_a$ as a right $H$-covariant 
$A[a^{-1}]$-bimodule such that $\mathrm{d}_a$ is right
$H$-colinear. The claim
that $(\Gamma_a,\mathrm{d}_a)$ is a right $H$-covariant FODC on
$A[a^{-1}]$ then
follows from Proposition~\ref{lemma03}. Consider an arbitrary element 
$b^i\otimes_A\omega^i\otimes_Ac^i\in\Gamma_a$ with $b^i,c^i\in A[a^{-1}]$ and
$\omega^i\in\Gamma$, sum over $i$ understood. Since, according to $i.)$, $\delta_R^a$ is an algebra homomorphism, we
obtain 
\begin{align*}
    \Delta_R^a(b\cdot(b^i\otimes_A\omega^i\otimes_Ac^i)\cdot c)
    &=(bb^i)_0\otimes_A\omega^i_0\otimes_A(c^ic)_0\otimes(bb^i)_1\omega^i_1(c^ic)_1\\
    &=b_0(b^i_0\otimes_A\omega^i_0\otimes_Ac^i_0)c_0\otimes b_1(b^i_1\omega^i_1c^i_1)c_1\\
    &=\delta_R^a(b)\Delta_R^a(b^i\otimes_A\omega^i\otimes_Ac^i)\delta_R^a(c)
\end{align*}
for all $b,c\in A[a^{-1}]$. That is $\Gamma_a$ is a right $H$-covariant 
$A[a^{-1}]$-bimodule.
Next, applying $\mathrm{d}_a\otimes\mathrm{id}_H$ to the equation
$1_A\otimes 1_H=\delta_R(a)\delta_R^a(a^{-1})$ we obtain
\begin{align*}
    0&=(\mathrm{d}a_0\otimes a_1)\cdot\delta_R^a(a^{-1})
    +\delta_R^a(a)\cdot(\mathrm{d}_aa^{-1}_0\otimes a^{-1}_1)
    =\Delta_R(\mathrm{d}a)\cdot\delta_R^a(a^{-1})
    +\delta_R^a(a)\cdot(\mathrm{d}_aa^{-1}_0\otimes a^{-1}_1),
\end{align*}
where in the last expression we used right $H$-colinearity of
  $\mathrm{d}$. This implies
\begin{equation}\label{daDD}
\mathrm{d}_aa^{-1}_0\otimes a^{-1}_1
=-\delta_R^a(a^{-1})\cdot\Delta_R(\mathrm{d}a)\cdot\delta_R^a(a^{-1})
=-\Delta_R^a(a^{-1}\mathrm{d}(a)a^{-1})
=\Delta_R^a(\mathrm{d}_a(a^{-1}))~.
\end{equation}
The Leibniz rule of $\mathrm{d}_a$,
the algebra homomorphism property of $\delta_R^a$,
  the $A$-bilinearity of $\Delta_R^a$ and \eqref{daDD}
then imply  that $\mathrm{d}_a$
is right $H$-colinear on any power of $a^{-1}$ and furthermore on any element
of $A[a^{-1}]$.\qedhere
\end{enumerate}
\end{proof}

Let $A$ be an algebra,
$a_i\in A$, $i=1,\ldots,n$ be Ore elements and  $S_{i}=\{a^r_{i}, r\in \Z_{\geq 0}\}$.  For a multi-index $I=(i_1,\ldots,i_k)\in\{1,\ldots,n\}^k$ we use the notation
$A_I=AS_{i_1}^{-1}\ldots S_{i_k}^{-1}$, or equivalently the  notation
\begin{equation*}
    A_I=A_{(i_1,\ldots,i_k)}:=A[a_{i_1}^{-1},\ldots,a_{i_k}^{-1}]:=(\cdots(A[a_{i_1}^{-1}])
    \cdots )[a_{i_k}^{-1}]
\end{equation*}
for the $k$-fold Ore localization.
Given a FODC $(\Gamma,\mathrm{d})$ on $A$ we similarly define the $k$-fold Ore localization of the
  $A$-bimodule $\Gamma$ by
$$
\Gamma_I=\Gamma_{(i_1,\ldots,i_k)}:=(\cdots(\Gamma_{a_{i_1}})_{a_{i_2}}\cdots)_{a_{k}}:=A_I\cdots
\otimes_{A_{(i_1,i_2)}}(A_{(i_1,i_2)}\otimes_{A_{i_1}}(A_{i_1}\otimes_A\Gamma\otimes_A
A_{i_1})\otimes_{A_{i_1}}A_{(i_1,i_2)})\otimes_{A_{(i_1,1_2)}}\cdots A_I$$
and of the exterior derivative by
\beq\label{ore-der}
\mathrm{d}_I=\mathrm{d}_{(i_1,\ldots,i_k)}:=(\cdots(\mathrm{d}_{a_{i_1}})_{a_{i_2}}\cdots)_{a_{i_k}}\colon A_{I}~
\rightarrow\Gamma_{I}~.
\eeq

\begin{lemma}\label{local-lemmaII}
Let $A$ be an algebra and
$a_i\in A$, $i=1,\ldots,n$ be Ore elements such that subsequent localizations
do not depend on the order. Then, subsequent localizations of a FODC on $A$ are
independent of the order, as well.
\end{lemma}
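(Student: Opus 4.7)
The proof proceeds by induction on $n$, reducing to the case $n=2$ of two Ore elements $a,b\in A$ with $A_{a,b}:=A[a^{-1}][b^{-1}]$ and $A_{b,a}:=A[b^{-1}][a^{-1}]$ coinciding as algebras (henceforth simply denoted $A_{a,b}$). The task is then to identify both the bimodules $\Gamma_{a,b}:=(\Gamma_a)_b$ and $\Gamma_{b,a}:=(\Gamma_b)_a$ and the differentials $\mathrm{d}_{a,b}:=(\mathrm{d}_a)_b$ and $\mathrm{d}_{b,a}:=(\mathrm{d}_b)_a$ produced by Lemma~\ref{local-lemma}.

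For the bimodule identification, I would expand
\[
(\Gamma_a)_b \;=\; A_{a,b}\otimes_{A_a}\bigl(A_a\otimes_A\Gamma\otimes_A A_a\bigr)\otimes_{A_a}A_{a,b}
\;\cong\; A_{a,b}\otimes_A\Gamma\otimes_A A_{a,b}
\]
using associativity of the tensor product together with $A_{a,b}\otimes_{A_a}A_a\cong A_{a,b}$. The analogous computation for $(\Gamma_b)_a$ produces the same $A_{a,b}$-bimodule, so $\Gamma_{a,b}$ and $\Gamma_{b,a}$ are canonically identified; in particular both agree with the one-step description $\Gamma_I=A_I\otimes_A\Gamma\otimes_A A_I$ used in the excerpt.

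For the differentials, observe that $\mathrm{d}_{a,b}$ and $\mathrm{d}_{b,a}$ are both $\Bbbk$-linear maps $A_{a,b}\to\Gamma_{a,b}$ satisfying the Leibniz rule, and both restrict to $\mathrm{d}$ on $A\subseteq A_{a,b}$ by construction (each localization step leaves elements of the previous algebra untouched). The key observation is that any Leibniz derivation $D\colon A_{a,b}\to\Gamma_{a,b}$ extending $\mathrm{d}$ is already determined on the generating set $A\cup\{a^{-1},b^{-1}\}$: applying Leibniz to $a\cdot a^{-1}=1$ and $b\cdot b^{-1}=1$ forces
\[
D(a^{-1})=-a^{-1}(\mathrm{d}a)a^{-1},\qquad D(b^{-1})=-b^{-1}(\mathrm{d}b)b^{-1},
\]
and these equalities hold verbatim for both $\mathrm{d}_{a,b}$ and $\mathrm{d}_{b,a}$ by the defining formula (\ref{eq110}) of Lemma~\ref{local-lemma}, since, for instance, $\mathrm{d}_b$ restricted to $A$ coincides with $\mathrm{d}$ so that $(\mathrm{d}_b)_a(a^{-1})=-a^{-1}(\mathrm{d}_b a)a^{-1}=-a^{-1}(\mathrm{d}a)a^{-1}$. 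As $A_{a,b}$ is generated as a $\Bbbk$-algebra by $A\cup\{a^{-1},b^{-1}\}$, two Leibniz derivations agreeing on this set must coincide.

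The main (mild) obstacle is justifying this last uniqueness step: I do \emph{not} need to build a derivation from scratch, so the coherence questions that usually attend Ore localizations are bypassed. Both $\mathrm{d}_{a,b}$ and $\mathrm{d}_{b,a}$ are well defined by Lemma~\ref{local-lemma}, so their difference $\Delta:=\mathrm{d}_{a,b}-\mathrm{d}_{b,a}$ is a $\Bbbk$-linear Leibniz derivation into $\Gamma_{a,b}$ vanishing on $A\cup\{a^{-1},b^{-1}\}$; a straightforward induction on the length of a product representation of an element of $A_{a,b}$ then shows $\Delta\equiv 0$, concluding the argument. The inductive step from $n=2$ to general $n$ is immediate: any two orderings on $\{a_1,\dots,a_n\}$ are connected by a finite sequence of transpositions of adjacent localizations, each of which is handled by the $n=2$ case applied inside the intermediate algebra.
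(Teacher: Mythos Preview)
Your proof is correct and follows essentially the same approach as the paper: both identify the bimodules via the associativity computation $A_{a,b}\otimes_{A_a}(A_a\otimes_A\Gamma\otimes_A A_a)\otimes_{A_a}A_{a,b}\cong A_{a,b}\otimes_A\Gamma\otimes_A A_{a,b}$, and both argue that the two differentials agree on $A$ and on $a^{-1},b^{-1}$ and hence everywhere by the Leibniz rule. Your presentation is slightly more explicit (the difference-of-derivations argument and the adjacent-transposition induction to general $n$), but the underlying strategy is identical.
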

\begin{proof}
Consider a FODC $(\Gamma,\mathrm{d})$ on $A$.
For two Ore elements $a_i,a_j\in A$, using that  $A_{(i,j)}=A_{(j,i)}$
we have
\begin{align*}
    (\Gamma_{a_i})_{a_j}
    &=A_{(i,j)}\otimes_{A_i}(A_i\otimes_{A}\Gamma\otimes_{A}A_i)\otimes_{A_i}A_{(i,j)}\\
    &=A_{(i,j)}\otimes_A\Gamma\otimes_AA_{(i,j)}\\
    &=A_{(j,i)}\otimes_{A_j}(A_j\otimes_A\Gamma\otimes_AA_j)\otimes_{A_j}A_{(j,i)}\\
    &=(\Gamma_{a_j})_{a_i}
\end{align*}
as an equation of $A_{(i.j)}$-bimodules. In particular,
\begin{align*}
\mathrm{d}_{(i,j)}, \mathrm{d}_{(j,i)}\colon A_{(i.j)}
\rightarrow\Gamma_{(i,j)}    
\end{align*}
have the same domain and codomain. They coincide
on $A$ and by definition (\ref{eq110}) they also agree on $a_i^{-1}$ and $a_j^{-1}$. Both
differentials are extended by the Leibniz rule, therefore
$\mathrm{d}_{(i,j)}=\mathrm{d}_{(j,i)}$. An easy inductive argument on
the number of Ore localization then proves the lemma.
\end{proof}

This
lemma shows in particular that the $k$-fold Ore localization of the
$A$-bimodule $\Gamma$ is \mbox{$\Gamma_I=A_I\otimes_A\Gamma\otimes_AA_I$,}
that for short we frequently write as $\Gamma_I=A_I\Gamma A_I$.
Recalling also Lemma \ref{local-lemma2} the following proposition is straightforward.

\begin{proposition}\label{ore-calc}
Let $(\Gamma,\mathrm{d})$ be a FODC on a 
right $H$-comodule algebra $(A,\delta_R)$ and
$a_i\in A$, $i=1,\ldots,n$ be Ore elements such that $\delta_R(a_i)$ are
  invertible elements of $A_i\otimes H$. Then
\begin{equation*}
    \Gamma_I=A_I\otimes_A\Gamma\otimes_AA_I, \qquad
\mathrm{d}_I\colon A_I\rightarrow\Gamma_I
\end{equation*}
is a right $H$-covariant FODC on $A_I$. If subsequent Ore
localizations of $A$ are independent on the order  also the subsequent
localizations of the FODC $(\Gamma, d)$ are order independent.
\end{proposition}

\begin{remark}
We have considered FODCi on  Ore
extended algebras. In Section \ref{pc-sec}, associated with a
FODC $(\Gamma_A,\rm{d}_A)$ there is the graded algebra of 0- and 1-forms $\Omega^{\leqslant 1}_A=A\oplus\Gamma_A$. It is natural to
consider also Ore extensions of this graded algebra (which are
necessarily by degree zero elements). In this case an Ore element $a$
is in particular Ore in the zero degree subalgebra $A$ and we also require to
be able to order on the right (and on the left) of 1-forms the
negative powers $a$. 
Thus, if $a\in A$ is Ore in $\Omega^{\leqslant 1}_A$ the graded algebra $A_a\oplus\Gamma_a$ with $\Gamma_a$ defined in \eqref{eqGAGA} is the Ore extension of $A\oplus\Gamma$ corresponding to $a$.
This Ore condition on 1-forms is met in the examples we shall consider.
\end{remark}

\subsection{Covariant differential calculi on sheaves over quantum 
projective varieties}\label{DConSheaves}

In this section we consider all modules over the complex numbers
$\mathbb{C}$. When considering the quantizations  $\mathcal{O}_q(G)$,
$\mathcal{O}_q(P)$ we therefore specialize $q$ to be in $\mathbb{C}$.
Fix a complex semisimple algebraic group $G$ with parabolic subgroup $P$,
quantizations $\mathcal{O}_q(G)$, $\mathcal{O}_q(P)$ and the sheaves
$\mathcal{F}_G$ and $\mathcal{O}_M$ as in Theorem~\ref{main-afl},
where the QPB property of $\cF_G$  is
with respect to the finite open cover $\{U_i\}_{i\in \mathcal{I}}$, $\mathcal{I}=\{1,\dots,n\}$ determined by a quantum section.
Consider the (finite) topology generated by the open cover; it
will have topological basis
${\mathcal{B}}=\{U_I\}_{I\in\mathcal{II}}$, where we recall that $\mathcal{II}$ is
the set of ordered multi-indices
$I=(i_1,\dots, i_r)$, $1\leq i_1 < \dots < i_r \leq n$ with
  $r=1,2,\ldots n$ (and we also consider the case $r=0$ corresponding to the empty set) and $U_I:=\cap_{i \in I} U_i$.

Given a right $\mathcal{O}_q(P)$-covariant FODC $(\Gamma,\mathrm{d})$ on
$\mathcal{O}_q(G)$ we induce a $\mathcal{O}_q(P)$-covariant FODC 
on $\mathcal{F}$ according to 
Section~\ref{SectionOre} as follows. We set on the basis $\mathcal{B}$ of open sets $U_I$
\begin{equation}\label{cactusGI}
    \Upsilon_G(U_I):=
\mathcal{F}_G(U_I)\otimes_{\mathcal{O}_q(G)}
    \Gamma\otimes_{\mathcal{O}_q(G)}\mathcal{F}_G(U_I)~, 
\end{equation}
and define restriction morphisms
$r^\Upsilon_{JI}\colon\Upsilon_G(U_I)\rightarrow\Upsilon_G(U_J)$ for all
$U_J\subseteq U_I$ by
\begin{equation}\label{eq108}
    r^\Upsilon_{JI}(f\otimes_{\mathcal{O}_q(G)}
    \omega\otimes_{\mathcal{O}_q(G)}g)
    :=r_{JI}(f)\otimes_{\mathcal{O}_q(G)}
    \omega\otimes_{\mathcal{O}_q(G)}r_{JI}(g)~,
\end{equation}
where $f,g\in\mathcal{F}_G(U_I)$ and $\omega\in\Gamma$.
Since $r_{JI}$ are algebra homomorphisms which are the identity on 
$\mathcal{O}_q(G)$ the expression (\ref{eq108}) is well-defined on the
algebraic tensor product. The right $\mathcal{O}_q(P)$-comodule algebra map 
$r_{JI}$ defines an $\mathcal{F}_G(U_I)$-bimodule structure on $\Upsilon_G(U_J)$ 
and $r^\Upsilon_{JI}\colon\Upsilon_G(U_I)\rightarrow\Upsilon_G(U_J)$ is a
map of right $\mathcal{O}_q(P)$-covariant
$\Upsilon_G(U_J)$-bimodules, i.e., it is  $\mathcal{O}_q(P)$-colinear and
$r^\Upsilon_{JI}(f\cdot\theta\cdot g)
=r_{JI}(f)r^\Upsilon_{JI}(\theta)r_{JI}(g)$ for all $f,g\in\mathcal{F}_G(U_I)$
and $\theta\in\Upsilon_G(U_I)$. The equality
$r^\Upsilon_{KJ}\circ r^\Upsilon_{JI}=r^\Upsilon_{KI}$
for any three opens $U_K\subseteq U_J\subseteq U_I$
then follows from that for the restriction morphisms of
$\mathcal{F}_G$. We have defined the $\mathcal{B}$-presheaf
  $\Upsilon_G$  of right  $\mathcal{O}_q(P)$-covariant $\mathcal{F}_G$-bimodules.
We denote its sheafification by the same symbol $\Upsilon_G$.

\begin{theorem}[Induced calculus on the sheaf $\mathcal{F}_G$]\label{thm02}
Let $(\Gamma,\mathrm{d})$ be a right covariant FODC on the Hopf
algebra $\cO_q(G)$ and
$\mathcal{F}_G$,  $\Upsilon_G$ the sheaves of
 right $\mathcal{O}_q(P)$-comodule algebras and
of  right $\mathcal{O}_q(P)$-covariant $\cF_G$-bimodules
defined above.

\begin{enumerate}
\item[i.)] The linear maps 
$$
{\mathrm{d}}_I:\mathcal{F}_G(U_I) \lra\Upsilon_G(U_I)~,
$$
defined in (\ref{ore-der}) for $A_I=\mathcal{F}_G(U_I)$
and $\Gamma_I=\Upsilon_G(U_I)$, define a morphism of sheaves of right
$\cO_q(P)$-comodules $\mathrm{d}\colon
\mathcal{F}_G\to \Upsilon_G$. This gives a
right $\mathcal{O}_q(P)$-covariant FODC
$(\Upsilon_G,\mathrm{d})$ 
on the sheaf $\mathcal{F}_G$.
\item[ii.)] The FODC $(\Upsilon_G,\mathrm{d})$ on the sheaf 
$\cF_G$  
induces a FODC $(\Upsilon_M,\mathrm{d}_M)$ on the sheaf
$\cF_G^{\coi \cO_q(P)}=\cO_M$.
\item[iii.)] If $\mathcal{F}_G$ is a QPB the sheaf of base forms is isomorphic, as a sheaf of $\cO_M$-bimodules, to the intersection
of that of horizontal and $H$-coinvariant forms: $\Upsilon^{}_M\cong\Upsilon_G^\mathrm{hor}
\cap\Upsilon_G^{\mathrm{co}\:\!\mathcal{O}_q(P)}$. 
\end{enumerate}
\end{theorem}

\begin{proof}
Let $\mathcal{F}_G$ be the $\mathcal{B}$-presheaf defined in
\eqref{eq115} and $\Upsilon_G$ the $\mathcal{B}$-presheaf defined in
\eqref{cactusGI}, \eqref{eq108}.
We  show that the assignment $U_I\mapsto {\mathrm{d}_I}$
defines a morphism of
$\mathcal{B}$-presheaves  $\mathrm{d}:\mathcal{F}_G\to\Upsilon_G$. 
That is, $\mathrm{d}_J(r_{JI}(f))=
r^\Upsilon_{JI}(\mathrm{d}_If)$ for all $f\in\mathcal{F}_G(U_I)$
and $U_J\subseteq U_I$. This is clear if $f\in\mathcal{O}_q(G)\subseteq
\mathcal{F}_G(U_I)$. Also for $f=s_{i_\ell}^{-1}$, where
$1\leq\ell\leq n$, $i_\ell\in I=(i_1,\ldots,i_r)$,
we obtain
\begin{align*}
    r^\Upsilon_{JI}(\mathrm{d}_Is_{i_\ell}^{-1})
    &=-r^\Upsilon_{JI}(s_{i_\ell}^{-1}\otimes_{\mathcal{O}_q(G)}
    \mathrm{d}s_{i_\ell}\otimes_{\mathcal{O}_q(G)}s_{i_\ell}^{-1})\\
    &=-r_{JI}(s_{i_\ell}^{-1})\otimes_{\mathcal{O}_q(G)}
    \mathrm{d}s_{i_\ell}\otimes_{\mathcal{O}_q(G)}r_{JI}(s_{i_\ell}^{-1})\\
    &=-s_{i_\ell}^{-1}\otimes_{\mathcal{O}_q(G)}
    \mathrm{d}s_{i_\ell}\otimes_{\mathcal{O}_q(G)}s_{i_\ell}^{-1}\\
    &= ~\mathrm{ d}_J(s_{i_\ell}^{-1})\\
    &= ~\mathrm{ d}_J(r_{JI}(s_{i_\ell}^{-1}))\,.
\end{align*}
By the Leibniz rule, cf. Proposition~\ref{ore-calc}, this
$\mathcal{B}$-presheaf property extends to any element
$f\in\mathcal{F}_G(U_I)$, $\mathrm{d}_J(r_{JI}(f))=
r^\Upsilon_{JI}(\mathrm{d}_If)$. 

Recalling from Proposition~\ref{ore-calc} that $(\Upsilon_G(U_I),
\mathrm{d}_I)$ is a right $\mathcal{O}_q(P)$-covariant FODC and that $\mathcal{F}_G$
and $\Upsilon_G$ are  $\mathcal{B}$-presheaves of right $\mathcal{O}_q(P)$-comodules,
we see that the $\mathcal{B}$-presheaf morphism
$\mathrm{d}\colon\mathcal{F}_G\to\Upsilon_G$ is compatible
with the right $\mathcal{O}_q(P)$-comodule structure.
Furthermore, the Leibniz rule and surjectivity condition are satisfied for all opens
$U_I$ in $\mathcal{B}$. We sheafify  according to Observation~\ref{bsheaf} and obtain
the sheaf $\Upsilon_G$ of right $\mathcal{O}_q(P)$-covariant
$\mathcal{F}_G$-bimodules with the sheaf morphism
$\mathrm{d}\colon\mathcal{F}_G\to\Upsilon_G$. Recalling that stalks are preserved under sheafification
and that the
stalks at $p$ are respectively  $(\cF_G)_p =\cF_G(U_p)$,
$(\Upsilon_G)_p=\Upsilon_G(U_p)$ we infer that the Leibniz rule and
the surjectivity condition  for the sheafified morphism
$\mathrm{d}\colon\mathcal{F}_G\to\Upsilon_G$  are satisfied on the 
stalks.
This proves {\it{i.)}}.
Recalling that the sheaf equality $\cF_G^{\coi \cO_q(P)}=\cO_M$ was proven in Theorem~\ref{main-afl} we have that {\it{ii.)}} follows directly from Corollary~\ref{basef-cor}. The last point {\it{iii.)}} follows from Theorem~\ref{thmcharbase}.
\end{proof}

\subsection{Principal covariant calculi on quantum principal  bundles}\label{sectionQDCProjSpace}
A principal covariant
calculus on a QPB $\mathcal{F}$ is
 a right $H$-covariant FODC $(\Upsilon,\mathrm{d})$ on the sheaf $\mathcal{F}$
 (see Definition~\ref{fodc-def}) with an exact sequence  on the
 stalks.

\begin{definition}\label{DConQPB}
Consider a QPB $\mathcal{F}$ on a quantum ringed space $(M,\cO_M)$
and a FODC $(\Upsilon,\mathrm{d})$ on $\mathcal{F}$.
Given a left covariant FODC $(\Gamma_H,\mathrm{d}_H)$ on $H$
we call $(\Upsilon,\mathrm{d})$ a \textit{principal
calculus} on $\mathcal{F}$ if for all $p\in M$ we have the exact
sequence of stalks
\begin{equation}\label{eq109}
    0\rightarrow\mathcal{F}_p\otimes_{(\mathcal{O}_M)^{}_p}(\Upsilon_M)^{}_p
    \rightarrow\Upsilon_p\xrightarrow{\mathrm{ver}_p}
    \mathcal{F}_p\square_H
    \Gamma_H\rightarrow 0~,
\end{equation}
where $\mathrm{ver}_p$ is the vertical map defined in Section~\ref{pc-sec} and $\Upsilon_M$ denotes the sheaf of base forms
defined in Proposition~\ref{prop08}.
If, in addition, $(\Upsilon,\mathrm{d})$ is right $H$-covariant
and $(\Gamma_H,\mathrm{d}_H)$ is bicovariant, we say that
$(\Upsilon,\mathrm{d})$ is
a \textit{principal covariant calculus} on the sheaf $\mathcal{F}$.
\end{definition}

We next apply Theorem~\ref{thm02} and consider three FODCi on QPBs over 
 $\mathbf{P}^1(\mathbb{C})$, the second one being a principal calculus and the third one a principal covariant calculus. The first two have total space algebra $\mathcal{O}_q(\mathrm{SL}_2(\mathbb{C}))$
with total space calculus the 4D and 3D ones of  Examples~\ref{sl2}
and \ref{ex01}. The third example has total space algebra $\mathcal{O}_q(\mathrm{GL}_2(\mathbb{C}))$ with total space calculus the 
4D one of Example~\ref{eq04}.

\subsubsection{First order differential calculus via Ore localization of 4D Calculus on \texorpdfstring{$\mathcal{O}_q(\mathrm{SL}_2(\mathbb{C}))$}{Oq(SL2(C))}}

Consider the principal bundle 
$\wp: \rSL_2(\C) \lra \rSL_2(\C)/P \simeq \bP^1(\C)$,
where $P$ is the upper Borel in $\rSL_2(\C)$. 
Take $V_i$ to be the subset of matrices in $\rSL_2 (\C)$ with entry $(i,1)$ not 
equal to zero. 
 We observe that $\{V_1,V_2\}$ is an open cover of $\rSL_2 (\C)$. Define $U_i=\wp(V_i)$
and observe that
$\{U_1,U_2\}$ is an open cover of $\bP^1(\C)$ since $\wp$ is an open
map.
This is the cover obtained via the quantum section $\alpha\in\mathcal{O}(\mathrm{SL}_2(\mathbb{C}))$ with coproduct
$
\Delta(\alpha)=\alpha\otimes\alpha + \beta\otimes\gamma.
$
In fact $V_1=\{g \in \rSL_2(\C) \,|\, \alpha(g) \neq 0\}$ and
$V_2=\{g \in \rSL_2(\C) \,|\, \gamma(g) \neq 0\}$.
Explicitly, $U_1,U_2$ are the opens obtained by removing the north or
south pole. 
The coaction 
$$
\delta_R\colon\mathcal{O}(\mathrm{SL}_2(\mathbb{C}))\rightarrow\mathcal{O}(\mathrm{SL}_2(\mathbb{C}))\otimes\mathcal{O}(P)~,~~~~~~
\begin{pmatrix}
\alpha & \beta\\
\gamma & \delta
\end{pmatrix}\mapsto\begin{pmatrix}
\alpha & \beta\\
\gamma & \delta
\end{pmatrix}\dot\otimes\begin{pmatrix}
t & p\\
0 & t^{-1}
\end{pmatrix}
$$
uniquely extends to coactions on the localizations
$\cO(\rSL_2)[\ap^{-1}]$ and $\cO(\rSL_2)[\cm^{-1}]$ by defining
$\delta_R \ap^{-1}=\ap^{-1}\otimes t^{-1}$ and
$\delta_R \cm^{-1}=\cm^{-1}\otimes t^{-1}$, respectively.
The coinvariant subalgebras for $\cO(\rSL_2)[\ap^{-1}]$ 
and $\cO(\rSL_2)[\cm^{-1}] $ respectively are isomorphic to 
$\C[u]$ and  $\C[v]$ with $u:=\ap^{-1}\cm$ and $v:=\ap\cm^{-1}$ and they 
are the coordinate rings of the affine algebraic
varieties corresponding to the opens $U_1, U_2$ in $\rSL_2(\C)/P\simeq \bP^1(\C)$.

The quantum deformation of this construction has been studied in \cite{AFL} as an
example of QPB over projective base. In summary, starting
with the Hopf algebra $A:=\cO_q(\rSL_2(\mathbb{C}))$ and its parabolic
quotient Hopf algebra $H:=\mathcal{O}_q(P)$ as in (\ref{quotient})
we define the noncommutative
localizations $A_{1}:=A[\ap^{-1}]$ and $A_2:=A[\cm^{-1}]$ and structure them
as right $H$-comodule algebras in complete analogy to the classical setting.
The subalgebras of right $H$-coinvariants are given by
\begin{equation*}
    B_1=\C_q[\cm\ap^{-1}] \simeq \C_q[u], \qquad B_2=\C_q[\ap \cm^{-1}] \simeq \C_q[v]~
\end{equation*}
and the ringed space $(\bP^1(\C),
\cO_{\bP^1(\C)})$ can then be constructed as
\begin{equation}\label{OqP1C}
    \cO_{\bP^1(\C)}(U_i):=B_{i}~,~~\cO_{\bP^1(\C)}(U_{12}):=B_{12}
:=B_{1}[u^{-1}]~,~~\cO_{\bP^1(\C)}(\bP^1(\C)):=\C~,
\end{equation}
$i\in\{1,2\}$,
with the only non-trivial restriction map given by $r_{{12},2}:B_{\,2}\to B_{12}$, $v\mapsto u^{-1}$.
Moreover we observe that
$$
\cF_{\mathrm{SL}_q}(U_i):=A_{i}~, \quad \cF_{\mathrm{SL}_q}(U_{12}):=A_{12}:=A_{1}[\cm^{-1}]=A_{2}[\ap^{-1}]~, \quad \cF_{\mathrm{SL}_q}(\bP^1(\C))=A~,\quad \cF_{\mathrm{SL}_q}(\emptyset):=\{0\}~,
$$
$i\in\{1,2\}$, defines a sheaf of right
$\cO_q(P)$-comodule algebras on $\bP^1(\C)$, which we denote by
$\mathcal{F}_{\mathrm{SL}_q}$.
Endowed with the cleaving maps,
\begin{equation}\label{cleavmaps}
j_1\colon H\rightarrow A_1~,~~~~\begin{pmatrix}
t & p\\
0 & t^{-1}
\end{pmatrix}\mapsto\begin{pmatrix}
\alpha & \beta\\
0 & \alpha^{-1}
\end{pmatrix}~,~~~~~~~~~~~
j_2\colon H\rightarrow A_2~,~~~~\begin{pmatrix}
t & p\\
0 & t^{-1}
\end{pmatrix}\mapsto\begin{pmatrix}
\gamma & \delta\\
0 & \gamma^{-1}
\end{pmatrix}~,
\end{equation}
$\mathcal{F}_{\mathrm{SL}_q}$ becomes a QPB on $\bP^1(\C)$ (on $A_{12}$ we
can choose $j_{12}=j_1\colon H\rightarrow A_1\subset A_{12}$ as a cleaving map
or alternatively $j_2$). Notice that the stalk
$(\mathcal{F}_{\mathrm{SL}_q})_p$ of $\mathcal{F}_{\mathrm{SL}_q}$ at
$p\in\mathbf{P}^1(\mathbb{C})$ equals $\mathcal{F}_{\mathrm{SL}_q}(U_{12})$
if $p\in U_{12}$ and $\mathcal{F}_{\mathrm{SL}_q}(U_i)$ if 
$p\in U_i\setminus U_{12}$. Further notice that
$\mathcal{B}=\{\emptyset,U_1,U_2,U_{12}\}$.
Recalling the induced calculus on a sheaf of
Theorem~\ref{thm02} we have

\begin{proposition}\label{Prop4D}
Let $(\Gamma,\mathrm{d}):=(\Gamma^+_{\mathrm{SL}},\mathrm{d}^+_{\mathrm{SL}})$ be the $4$-dimensional bicovariant FODC on $\cO_q(\rSL_2(\mathbb{C}))$
of Example~\ref{sl2}. The induced right $\mathcal{O}_q(P)$-covariant FODC
$(\Upsilon_{\mathrm{SL}_q},\mathrm{d})$ on $\mathcal{F}_{\mathrm{SL}_q}$
is a free left $\mathcal{F}_{\mathrm{SL}_q}$-module of dimension $4$.
\end{proposition}
\begin{proof}
The $A_I$-bimodules $\Upsilon_{\mathrm{SL}_q}(U_I)=\Gamma_I=A_I\Gamma A_I$
of the induced
calculus of Theorem~\ref{thm02} are constructed in Proposition~\ref{ore-calc}.
We show that $\Gamma_I=A_I\Gamma$. 
The proof then follows using that $\Gamma$ is a free left
$A$-module and ordering the invertible elements of $A_I$ on the left. The only nontrivial cases are $\Gamma_i$, $i\in\{1,2\}$, and 
$\Gamma_{12}$.
From the commutation relations 
(\ref{commrel}) (recall that the differential and the bimodule structure of the $4$-dimensional bicovariant calculi on $\mathrm{SL}_q(2)$ and $\mathrm{GL}_q(2)$ coincide up to the identification $\mathrm{det}_q=1$)
it immediately follows that
\begin{equation}\label{comalp}
\begin{split}
    &\omega^1\alpha^{-1}=q^{-1}\alpha^{-1}\omega^1,
    \qquad\qquad\qquad~~~\,\omega^2\alpha^{-1}=\alpha^{-1}\omega^2,\\
    &\omega^3\alpha^{-1}=\alpha^{-1}\omega^3
    +q^{-3}\lambda\alpha^{-2}\beta\omega^1,
    \qquad\omega^4\alpha^{-1}=q\alpha^{-1}\omega^4
    +\lambda\alpha^{-2}\beta\omega^2
\end{split}
\end{equation}
in $\Gamma_1$ and
\begin{equation}\label{comalp'}
\begin{split}
&    \omega^1\gamma^{-1}=q^{-1}\gamma^{-1}\omega^1,\qquad\qquad\qquad~~~\,
    \omega^2\gamma^{-1}=\gamma^{-1}\omega^2,\\
    &\omega^3\gamma^{-1}=\gamma^{-1}\omega^3
    +q^{-3}\lambda\gamma^{-2}\delta\omega^1,\qquad
    \omega^4\gamma^{-1}=q\gamma^{-1}\omega^4
    +\lambda\gamma^{-2}\delta\omega^2
\end{split}
\end{equation}
in $\Gamma_2$.
Recalling that a general element of $A_1$ is of the form
$\sum_ja^j\alpha^{-n_j}\in A_1$ for $n_j\in\mathbb{N}$, $a^j\in A$
and that $\Gamma$ is an $A$-bimodule which is freely generated as a
left $A$-module by $\omega^1, \omega^2, \omega^3, \omega^4$, 
the commutation relations (\ref{comalp}) imply 
$\Gamma A_1
\subseteq A_1\Gamma$.
Thus, $\Gamma_1=A_1\Gamma A_1\subseteq A_1\Gamma$ which gives
$\Gamma_1=A_1\Gamma$ (the inclusion $\Gamma_1\supseteq A_1\Gamma$  being trivial).
Similarly we prove $\Gamma_2=A_2\Gamma$, and
$\Gamma_{12}=A_{12}\Gamma$ using both (\ref{comalp}) and (\ref{comalp'}).
\end{proof}
In the proof of Proposition~\ref{Prop4D} it was shown that for all 
$U_I\in\mathcal{B}$ the right $H$-covariant
$A_I$-bimodule
$\Upsilon_{\mathrm{SL}_q}(U_I)
=\mathrm{span}_{A_I}\{\omega^1,\omega^2,\omega^3,\omega^4\}$
is freely generated from the left. 
This implies that $\Upsilon_{\mathrm{SL}_q}(U_1\cup U_2)=\Gamma$, i.e. that the
presheaf constructed by the Ore extensions
is already a sheaf and there is no sheafification required.
\begin{proposition}\label{PropBase4D}
The induced FODC 
$(\Upsilon_{\mathbf{P}^1(\mathbb{C})},\mathrm{d}_{\mathbf{P}^1(\mathbb{C})})$ of base forms on $\mathcal{O}_q(\mathbf{P}^1(\mathbb{C}))$
(described in Corollary~\ref{basef-cor}) is determined by
\begin{equation}\label{Base4D}
\begin{split}
    &\Upsilon_{\mathbf{P}^1(\mathbb{C})}(U_1):=\Gamma_{B_1}=
    B_1\mathrm{d}_1B_1=\mathrm{span}_\mathbb{C}\{u^k\mathrm{d}_1u~|~k\in\mathbb{N}\}~,\\
    &\Upsilon_{\mathbf{P}^1(\mathbb{C})}(U_2):=\Gamma_{B_2}=
    B_2\mathrm{d}_2B_2=
    \mathrm{span}_\mathbb{C}\{v^k\mathrm{d}_2v~|~k\in\mathbb{N}\}~,\\
    &\Upsilon_{\mathbf{P}^1(\mathbb{C})}(U_{12}):=\Gamma_{B_{12}}=
    B_{12}\mathrm{d}_{12}B_{12}=
    \mathrm{span}_\mathbb{C}\{u^k\mathrm{d}_{12}u~|~k\in\mathbb{Z}\}~.
\end{split}
\end{equation}
Moreover, $\Gamma_{B_1}=\mathrm{span}_{B_1}\{\alpha^{-2}\omega^2\}$,
$\Gamma_{B_2}=\mathrm{span}_{B_2}\{\gamma^{-2}\omega^2\}$
and $\Gamma_{B_{12}}=\mathrm{span}_{B_{12}}\{\alpha^{-2}\omega^2\}$
are free left modules and we have the sheaf isomorphism
\begin{equation}\label{slseq}
\Upsilon_{\mathbf{P}^1(\mathbb{C})}\cong\Upsilon_{\mathrm{SL}_q}^{\mathrm{co}H}\cap\Upsilon_{\mathrm{SL}_q}^\mathrm{hor}~.
\end{equation}
\end{proposition}
\begin{proof}
Using (\ref{4D+}), (\ref{comalp}) and
$\delta=\alpha^{-1}+q^{-1}\alpha^{-1}\beta\gamma$ we obtain
\begin{align*}
    \mathrm{d}_1u
    &=\mathrm{d}_1(\gamma\alpha^{-1})\\
    &=\mathrm{d}(\gamma)\alpha^{-1}-\gamma\alpha^{-1}\mathrm{d}(\alpha)\alpha^{-1}\\
    &=\bigg(\frac{q-1}{\lambda}\gamma\omega^1+\frac{q^{-1}-1}{\lambda}\gamma\omega^4
    -\delta\omega^2\bigg)\alpha^{-1}
    -\gamma\alpha^{-1}\bigg(\frac{q-1}{\lambda}\alpha\omega^1
    +\frac{q^{-1}-1}{\lambda}\alpha\omega^4-\beta\omega^2
    \bigg)\alpha^{-1}\\
    &=-\delta\omega^2\alpha^{-1}+\gamma\alpha^{-1}\beta\omega^2\alpha^{-1}\\
    &=-\alpha^{-2}\omega^2-q^{-1}\alpha^{-1}\beta\gamma\alpha^{-1}\omega^2
    +\gamma\alpha^{-1}\beta\alpha^{-1}\omega^2\\
    &=-\alpha^{-2}\omega^2~.
\end{align*}
Similarly $\mathrm{d}_2v=\mathrm{d}_2(\alpha\gamma^{-1})=-\gamma^{-2}\omega^2$.
The generators $u=\gamma\alpha^{-1}$ and $v=\alpha\gamma^{-1}$ of $B_1$ and $B_2$ have
therefore the commutation relations
\begin{equation}\label{eq129}
    (\mathrm{d}_1u)u=q^2u\mathrm{d}_1u~,~~~~~
    (\mathrm{d}_2v)v=q^{-2}v\mathrm{d}_2v~.
\end{equation}
Then the equalities in (\ref{Base4D}) follow, the last one also
recalling that $v=u^{-1}$ in $B_{12}$.

From $\mathrm{d}_1u=-\alpha^{-2}\omega^2$,
$\mathrm{d}_2v=-\gamma^{-2}\omega^2$ 
and the freeness of the left modules $\Gamma_1$, $\Gamma_2$ and $\Gamma_{12}$
it further
follows that
$\Gamma_{B_1}=\mathrm{span}_{B_1}\{\alpha^{-2}\omega^2\}$,
$\Gamma_{B_2}=\mathrm{span}_{B_2}\{\gamma^{-2}\omega^2\}$
and $\Gamma_{B_{12}}=\mathrm{span}_{B_{12}}\{\alpha^{-2}\omega^2\}$
are free left modules.  The sheaf equality \eqref{slseq} follows directly from
Theorem~\ref{thmcharbase}.
\end{proof}
Note that the commutation relations (\ref{eq129}) 
agree with those obtained by Chu, Ho and Zumino in \cite{zumino96}.

\begin{remark}
Consider the right $H$-covariant FODC
$(\Upsilon_{\mathrm{SL}_q},\mathrm{d})$ on $\mathcal{F}_{\mathrm{SL}_q}$
together with the bicovariant quotient calculus $(\Gamma_H,\mathrm{d}_H)$ on $H$
induced from the $4$-dimensional bicovariant FODC $(\Gamma,\mathrm{d})$
on $A$. The sequences
\begin{equation}\label{ses4D}
    0\rightarrow A_I\otimes_{B_I}\Gamma_{B_I}\rightarrow\Gamma_{I}
    \xrightarrow{\mathrm{ver}_I}A_I\square_H\Gamma_H
    \rightarrow 0
\end{equation}
are well-defined for all $U_I\in\mathcal{B}$ with the
vertical map given by $\mathrm{ver}_I\colon\Gamma_{I}
\rightarrow A_I\square_H\Gamma_H$, $a^i\omega^i\mapsto
a^i_0\otimes a^i_1[\omega^i]$ for all $a^i\in A_I$ (the proof is as in Lemma~\ref{lemma-ver}).
However $(\Upsilon_{\mathrm{SL}_q},\mathrm{d})$
is \textit{not} a principal calculus on the
QPB $\mathcal{F}_{\mathrm{SL}_q}$. For example the sequence (\ref{ses4D}) for $I=1$
is \textit{not} exact. Indeed first observe that, since $\Gamma_1$
and $\Gamma_{B_1}$ are free modules, $\omega^1\notin A_1\Gamma_{B_1}=\mathrm{span}_{A_1}\{\omega^2\}$.
Then, since $\omega^1$ is in the kernel of the projection
$\Gamma\rightarrow\Gamma_H$ it follows that
$\mathrm{ver}_1(\omega^1)=1\otimes[\omega^1]=0$
and therefore
$A_1\Gamma_{B_1}\subsetneq\ker\mathrm{ver}_1$.
\end{remark}

\subsubsection{Principal calculus via Ore localization of 3D calculus on \texorpdfstring{$\mathcal{O}_q(\mathrm{SL}_2(\mathbb{C}))$}{Oq(SL2(C))}}

In this section we show that the
failure of (\ref{ses4D}) to be an exact sequence can be cured by
considering a $3$-dimensional left covariant FODC instead of a $4$-dimensional
bicovariant FODC on $A=\mathcal{O}_q(\mathrm{SL}_2(\mathbb{C}))$ as the global
calculus of $\mathcal{F}_{\mathrm{SL}_q}$. We choose a left covariant
calculus on $A$ so to canonically obtain a left covariant calculus on
the quotient $H=\mathcal{O}_q(P)$. As in the previous section
we consider the QPB $\mathcal{F}_{\mathrm{SL}_q}$ with 
the cleaving maps (\ref{cleavmaps}).
\begin{lemma}\label{lem3D}
Let $(\Gamma,\mathrm{d})=(\Gamma_\mathrm{SL},\mathrm{d}_\mathrm{SL})$
be the $3$-dimensional left covariant FODC on $A$
of Example~\ref{ex01}. The induced FODC
$(\Upsilon_{\mathrm{SL}_q},\mathrm{d})$ on $\mathcal{F}_{\mathrm{SL}_q}$
is a free left $\mathcal{F}_{\mathrm{SL}_q}$-module of dimension $3$.
\end{lemma}
\begin{proof}
We first
provide the commutation relations of $\alpha^{-1}$ and $\gamma^{-1}$ with the
basis $\{\omega^0,\omega^1,\omega^2\}$ of the free $A$-module $\Gamma$.
From those in (\ref{eq125'}) we deduce
\begin{align}\label{3DCRalpha}
&\omega^0\alpha^{-1}=q^{-3}\alpha^{-1}\omega^0,\quad
& &\omega^1\alpha^{-1}=q^{-2}\alpha^{-1}\omega^1
+(q^{-6}-q^{-4})\alpha^{-2}\beta\omega^2,\quad
& &\omega^2\alpha^{-1}=q^{-3}\alpha^{-1}\omega^2\,,\\
&\omega^0\gamma^{-1}=q^{-3}\gamma^{-1}\omega^0,\quad
& &\omega^1\gamma^{-1}=q^{-2}\gamma^{-1}\omega^1
+(q^{-6}-q^{-4})\gamma^{-2}\delta\omega^2,\quad
& &\omega^2\gamma^{-1}=q^{-3}\gamma^{-1}\omega^2~.\nonumber
\end{align}
Then, proceeding as in the proof of Proposition~\ref{Prop4D}, we
conclude that $\Gamma_I=A_I\Gamma$ is a free left module (with
$I=1,2,12$).
\end{proof}
As in Proposition~\ref{PropBase4D} 
the sheaf of base forms $(\Upsilon_M,\mathrm{d}_M)$ is determined by
the free left modules 
\begin{align*}
    \Gamma_{B_1}=B_1\mathrm{d}_1B_1=\mathrm{span}_{B_1}\{\alpha^{-2}\omega^2\}\,,~~
    \Gamma_{B_2}=B_2\mathrm{d}_2B_2=\mathrm{span}_{B_2}\{\gamma^{-2}\omega^2\}\,,~~
    \Gamma_{B_{12}}=B_{12}\mathrm{d}_{12}B_{12}=\mathrm{span}_{B_{12}}\{\alpha^{-2}\omega^2\}\,.
\end{align*}
With  $u=\gamma\alpha^{-1}\in B_1$ and $v=\alpha\gamma^{-1}\in
  B_2$ we have $\mathrm{d}_1u=q^{-3}\alpha^{-2}\omega^2$
and $\mathrm{d}_2v=-q^{-2}\gamma^{-2}\omega^2$.
The $\Gamma_{B_I}$ bimodule relations are then determined by
\begin{equation*}
    (\mathrm{d}_1u)u=q^2u\:\!\mathrm{d}_1u~,~~~~~
    (\mathrm{d}_2v)v=q^{-2}v\mathrm{d}_2v~.
\end{equation*}
We prove for example the properties of  $\Gamma_{B_1}$. We have
\begin{equation*}
\begin{split}
\mathrm{d}_1u&=    \mathrm{d}_1(\gamma\alpha^{-1})\\
    &=(\mathrm{d}\gamma)\alpha^{-1}
    -\gamma\alpha^{-1}\mathrm{d}(\alpha)\alpha^{-1}\\
    &=\gamma\omega^1\alpha^{-1}+\delta\omega^2\alpha^{-1}
    -\gamma\alpha^{-1}(\alpha\omega^1+\beta\omega^2)\alpha^{-1}\\
    &=q^{-3}(\delta\alpha^{-1}-\gamma\alpha^{-1}\beta\alpha^{-1})\omega^2\\
    &=q^{-3}\alpha^{-2}\omega^2
\end{split}
\end{equation*}
where we used (\ref{eq124}), (\ref{3DCRalpha}) and
$\delta\alpha^{-1}-q^{-1}\alpha^{-1}\beta\gamma\alpha^{-1}=\alpha^{-2}$.
Again by (\ref{3DCRalpha}) the commutation relation
\begin{align*}
    (\mathrm{d}_1u)u
    =q^{-3}\alpha^{-2}\omega^2\gamma\alpha^{-1}
    =q^{-3}\alpha^{-2}\gamma\alpha^{-1}\omega^2
    =q^{-1}\gamma\alpha^{-3}\omega^2
    =q^2u\mathrm{d}_1u
\end{align*}
follows. Thus, 
$\Gamma_{B_1}=B_1\mathrm{d}_1B_1=\mathrm{span}_{B_1}\{\alpha^{-2}\omega^2\}$. Similarly
for $\Gamma_{B_2}$ and $\Gamma_{B_{12}}$. 

\begin{proposition}\label{Principal3D}
Let $(\Upsilon_{\mathrm{SL}_q},\mathrm{d})$ be the FODC on the locally trivial QPB $\mathcal{F}_{\mathrm{SL}_q}$ of Lemma~\ref{lem3D}.
Let $(\Gamma_H,\mathrm{d}_H)$ be the left covariant FODC on $H$ 
induced from the $3$-dimensional left covariant FODC $(\Gamma,\mathrm{d})$ on $A$ of Example~\ref{ex01}.
The sequences
\begin{equation}\label{ses3D}
\qquad\qquad    0\rightarrow A_I\otimes_{B_I}\Gamma_{B_I}\rightarrow\Gamma_{I}
    \xrightarrow{\mathrm{ver}_I}A_I\square_H\Gamma_H
    \rightarrow 0\,, \quad \quad U_I\in\mathcal{B}=\{\emptyset,U_1,U_2,U_{12}\}
\end{equation}
are exact and
$(\Upsilon_{\mathrm{SL}_q},\mathrm{d})$ is a principal calculus on $\mathcal{F}_{\mathrm{SL}_q}$.
\end{proposition}

\begin{proof}
Recalling Lemma~\ref{lemma-ver},
that $\Gamma_I$ are free left $A_I$-modules
and the left linearity of the vertical map (\ref{eq133}), we see that the vertical maps
in \eqref{ses3D} are given by $\mathrm{ver}_I\colon\Gamma_{I}
\rightarrow A_I\square_H\Gamma_H$, $a^\ell\omega^\ell\mapsto
a^\ell_0\otimes a^\ell_1[\omega^\ell]$, where $a^\ell\in A_I$.
In particular, they are well-defined. 
We first show that the second arrow in (\ref{ses3D}) is injective
in the case of $U_1$. The results for the other opens in
$\mathcal{B}$ follow analogously.
Recall that $\Gamma_{B_1}$ is generated  as a left  $B_1$-module by
$\alpha^{-2}\omega^2$ (cf. the discussion following Lemma~\ref{lem3D}). An
arbitrary element of $A_1\otimes_{B_1}\Gamma_{B_1}$ is therefore of the
form $a^i\otimes_{B_1}b^i\alpha^{-2}\omega^2$ with
$a^i\in A_1$ and $b^i\in B_1$ (sum understood). Since, by Lemma~\ref{lem3D},
$\Gamma_1$ is a free left $A_1$-module with basis $\{\omega^0,\omega^1,\omega^2\}$, 
it follows that $a^ib^i\alpha^{-2}\omega^2=0$ if and only if
$a^ib^i\alpha^{-2}=0$, i.e., if and only if, $a^ib^i=0$, which proves injectivity.
We next prove exactness in $\Gamma_{I}$.
The condition
$0=\mathrm{ver}_I(a^\ell\omega^\ell)
=a^\ell_0\otimes a^\ell_1[\omega^\ell]$ (sum understood) holds if and only if $a^0=a^1=0$,
indeed $[\omega^2]=0$, while $[\omega^0]$ and $[\omega^1]$ form a
basis of the
free left $H$-module $\Gamma_H$, cf. Example~\ref{ex01}.
This proves 
$\ker\mathrm{ver}_I={\rm{span}}_{A_I}\{\omega_2\}=A_I\Gamma_{B_I}$. In
order to prove surjectivity of $\mathrm{ver}_I\colon
\Gamma_{I}\rightarrow A_I\square_H\Gamma_H$ let $a^{i\ell}\otimes
h^i[\omega^\ell]$ (sums understood) be an arbitrary element in
$A_I\square_H\Gamma_H$. Since the forms $[\omega^\ell]$ are
left $H$-coinvariant, by  definition of 
cotensor product  we have $a^{i\ell}_0\otimes a^{i\ell}_1
\otimes h^i=a^{i\ell}\otimes h^i_1\otimes h^i_2$.  Then
$$
\mathrm{ver}_I(a^{i\ell}\epsilon(h^i)\omega^\ell)
=a^{i\ell}_0\otimes a^{i\ell}_1\epsilon(h^i)[\omega^\ell]
=a^{i\ell}\otimes h^i[\omega^\ell]
$$
and the sequences in  (\ref{ses3D}) are exact.
\end{proof}
As in the previous section the freeness of the presheaf $\Upsilon_{\mathrm{SL}_q}$
implies that there is no sheafification required and in particular
$\Upsilon_{\mathrm{SL}_q}(\mathbf{P}^1(\mathbb{C}))=\Gamma$.

We have seen that the differential calculus $(\Upsilon_{\mathrm{SL}_q},\mathrm{d})$ is principal.
Note that it is not principal covariant
since the induced left covariant FODC $(\Gamma_H,\mathrm{d}_H)$
on $H$ is not bicovariant. This is the case because the
ideal $I=\mathrm{span}_H\{(t-1)(t-q^2)
, p^2,(t-q^2)p\}$ characterizing
$(\Gamma_H,\mathrm{d}_H)$ is not closed under the adjoint
right coaction, e.g. $\mathrm{Ad}_R(p^2)\notin I\otimes H$
(it contains a polinomial in $t$ with roots different from $1$ and
$q^2$. See \cite{wor} for the classification of bicovariant FODCi on
Hopf algebras in terms of ideals closed under the right adjoint coaction).\

\subsubsection{Principal covariant calculus via Ore localization of 4D calculus on \texorpdfstring{$\mathcal{O}_q(\mathrm{GL}_2(\mathbb{C}))$}{Oq(GL2(C))}}

Consider the principal bundle $\mathrm{GL}_2(\mathbb{C})\rightarrow\mathrm{GL}_2(\mathbb{C})/P_\mathrm{GL}\cong\mathbf{P}^1(\mathbb{C})$
with $P_\mathrm{GL}$ the upper Borel in $\mathrm{GL}_2(\mathbb{C})$. As before,
localizing with respect to $\alpha$ and $\gamma$ gives the opens $U_1$ and $U_2$ with corresponding topology $\{\emptyset,U_1,U_2,U_{12},\mathbf{P}^1(\mathbb{C})\}$ on $\mathbf{P}^1(\mathbb{C})$.
Although $\mathrm{GL}_2(\mathbb{C})$ is not semisimple,
a quantum deformation of this bundle, which is 
a QPB
$\mathcal{F}_{\mathrm{GL}_q}$ on $\mathbf{P}^1(\mathbb{C})$,
is obtained via
Ore extensions of the quantum group $A=\mathcal{O}_q(\mathrm{GL}_2(\mathbb{C}))$.
Explicitly, we define the sheaf  of algebras by $\mathcal{F}_{\mathrm{GL}_q}(\emptyset):=\{0\}$,
$\mathcal{F}_{\mathrm{GL}_q}(\mathbf{P}^1(\mathbb{C})):=A$,
\begin{equation*}
    \mathcal{F}_{\mathrm{GL}_q}(U_1):=A_1:=A[\alpha^{-1}]~,~~~
    \mathcal{F}_{\mathrm{GL}_q}(U_2):=A_2:=A[\gamma^{-1}]~,~~~
    \mathcal{F}_{\mathrm{GL}_q}(U_1\cap U_2):=A_{12}:=A[\alpha^{-1},\gamma^{-1}]~.
\end{equation*}
The Hopf algebra quotient
$A\rightarrow H=\mathcal{O}_q(P_{\mathrm{GL}})=A/\langle\gamma\rangle$ defined from Example~\ref{eq04}
induces a natural comodule algebra structure on $\mathcal{F}_{\mathrm{GL}_q}$.
This is a locally trivial QPB 
with cleaving maps $j_1\colon H\rightarrow A_1$,
$j_2\colon H\rightarrow A_2$ and
$j_{12}=j_1\colon H\rightarrow A_1\subset A_{12}$ given by
\begin{align}\label{trivializations}
    j_1\begin{pmatrix}
    t & p\\
    0 & s
    \end{pmatrix}
    =\begin{pmatrix}
    \alpha & \beta\\
    0 & \alpha^{-1}\mathrm{det}_q
    \end{pmatrix}~,~j_1(r')=r~,~~~~~~
    j_2\begin{pmatrix}
    t & p\\
    0 & s
    \end{pmatrix}
    =\begin{pmatrix}
    \gamma & \delta\\
    0 & \gamma^{-1}\mathrm{det}_q
    \end{pmatrix}~,~j_2(r')=r~,
\end{align}
where $r\in A$ and $r'\in H$ are the inverses of the quantum determinants.
It is straightforward to verify that the sheaf $\mathcal{O}_{\mathbf{P}^1(\mathbb{C})}$ of coinvariants of the QPB $\mathcal{F}_{\mathrm{GL}_q}$ coincides with the one defined in (\ref{OqP1C}).
\begin{proposition}
The Ore extension of the bicovariant $4$-dimensional FODC $(\Gamma,\mathrm{d}):=(\Gamma_{\mathrm{GL}},\mathrm{d}_\mathrm{GL})$
of Example~\ref{eq04} gives a right $H$-covariant FODC $(\Upsilon_{\mathrm{GL}_q},\mathrm{d})$
on the QPB $\mathcal{F}_{\mathrm{GL}_q}$.
Explicitly, the sheaf $\Upsilon_{\mathrm{GL}_q}$
of right $H$-covariant $\mathcal{F}_{\mathrm{GL}_q}$-bimodules is defined by
$\Upsilon_{\mathrm{GL}_q}(\emptyset)=\{0\}$,
$\Upsilon_{\mathrm{GL}_q}(\mathbf{P}^1(\mathbb{C}))=\Gamma$ and
\begin{equation}\label{GLSheaf}
    \Upsilon_{\mathrm{GL}_q}(U_1)=A_1\otimes_A\Gamma\otimes_AA_1~,~~~
    \Upsilon_{\mathrm{GL}_q}(U_2)=A_2\otimes_A\Gamma\otimes_AA_2~,~~~
    \Upsilon_{\mathrm{GL}_q}(U_1\cap U_2)=A_{12}\otimes_A\Gamma\otimes_AA_{12}~.
\end{equation}
Together with the pullback calculus on the sheaf $\mathcal{O}_{\mathbf{P}^1(\mathbb{C})}$ of coinvariants
and the bicovariant quotient calculus on $H$
this gives a principal covariant calculus on $\mathcal{F}_{\mathrm{GL}_q}$.
\end{proposition}
\begin{proof}
Clearly (\ref{GLSheaf}) defines a presheaf $\Upsilon_{\mathrm{GL}_q}$
of right $H$-covariant $\mathcal{F}_{\mathrm{GL}_q}$-bimodules with
restriction morphisms as in (\ref{eq108}).
Repeating the computations of Proposition~\ref{Prop4D} and Proposition~\ref{PropBase4D} one shows that
\begin{enumerate}
\item[i.)] $\Upsilon_{\mathrm{GL}_q}(U_I)$ is a free left $\mathcal{F}_{\mathrm{GL}_q}(U_I)$-module
generated by $\{\omega^1,\omega^2,\omega^3,\omega^4\}$,

\item[ii.)] the base forms $(\Upsilon_{\mathbf{P}^1(\mathbb{C})},\mathrm{d}_{\mathbf{P}^1(\mathbb{C})})$ are determined by
$\Gamma_{B_1}=\mathrm{span}_{B_1}\{-\alpha^{-2}\omega^2\}$,
$\Gamma_{B_2}=\mathrm{span}_{B_2}\{-\gamma^{-2}\omega^2\}$
and $\Gamma_{B_{12}}=\mathrm{span}_{B_{12}}\{-\alpha^{-2}\omega^2\}$.
\end{enumerate}
Using the freeness of the presheaf
$\Upsilon_{\mathrm{GL}_q}$ it is not hard to check that
$\Upsilon_{\mathrm{GL}_q}(U_1\cup U_2)=\Upsilon_{\mathrm{GL}_q}(\mathbf{P}^1(\mathbb{C}))$, proving that $\Upsilon_{\mathrm{GL}_q}$ is a sheaf
of right $H$-covariant $\mathcal{F}_{\mathrm{GL}_q}$-bimodules.
By Proposition~\ref{ore-calc} the extended differential $\mathrm{d}_I\colon A_I
\rightarrow\Upsilon_{\mathrm{GL}_q}(U_I)$ defines a right 
$H$-covariant FODC for each open  $U_I\in\mathcal{B}$ and it is
straightforward to verify that this determines a morphism 
$\mathrm{d}\colon\mathcal{F}_{\mathrm{GL}_q}\rightarrow\Upsilon_{\mathrm{GL}_q}$
of sheaves of right $H$-comodules.

We next prove that the sequences 
\begin{equation}\label{ProofSequence}
    0\rightarrow A_I\otimes_A\Gamma_{B_I}\rightarrow\Gamma_{I}
\xrightarrow{\mathrm{ver}_I}A_I\square_H\Gamma_H\rightarrow 0
\end{equation}
are exact for all $U_I\in\mathcal{B}$.
From Theorem \ref{prop-ff} we have injectivity of the second arrow. Recall from Example~\ref{eq04} that the bicovariant quotient calculus $(\Gamma_H,\mathrm{d}_H)$
on $H$ is $3$-dimensional (instead of $2$-dimensional in the framework of
Proposition~\ref{Prop4D}) with left coinvariant basis $\{[\omega^1],[\omega^3],[\omega^4]\}$ and that by Lemma~\ref{lemma-ver} the
vertical map $\mathrm{ver}_I\colon\Gamma_I\rightarrow A_I\square_H\Gamma_H$
is well-defined for all $U_I\in\mathcal{B}$.
Now take an arbitrary element $a^\ell\omega^\ell\in\Gamma_I$ (sum understood).
Because of freeness 
$0=\mathrm{ver}_I(a^\ell\omega^\ell)=a^\ell_0\otimes a^\ell_1[\omega^\ell]$ if and only if
$a^1=a^3=a^4=0$.
Thus $\ker\mathrm{ver}_I=\mathrm{span}_{A_I}\{\omega^2\}=A_I\Gamma_{B_I}$.
Surjectivity of $\mathrm{ver}_I$ is verified as in Proposition~\ref{Principal3D}.
Hence the sequences (\ref{ProofSequence}) are exact and $(\Upsilon_{\mathrm{GL}_q},\mathrm{d})$ is a principal covariant calculus on $\mathcal{F}_{\mathrm{GL}_q}$.
\end{proof}
As a corollary, applying Theorem~\ref{thmcharbase} we have that
$\Upsilon_{\mathbf{P}^1(\mathbb{C})}\cong\Upsilon_{\mathrm{GL}_q}^{\mathrm{co}H}\cap\Upsilon_{\mathrm{GL}_q}^\mathrm{hor}$
are isomorphic sheaves.

\medskip

The sheaf $\mathcal{F}_{\mathrm{GL}_q}$ is locally trivial since
$A_I\cong B_I\# H$ according to \eqref{trivializations}. Considering the 
FODC on
$(\Upsilon_{\mathbf{P}^1(\mathbb{C})},\mathrm{d}_{\mathbf{P}^1(\mathbb{C})})$
and the bicovariant calculus $(\Gamma_H,\mathrm{d}_H)$ on $H$,
we can construct locally the smash product calculus
$(\Gamma_{B_I\#H}, \mathrm{d}_{B_I\#H})$  of Section \ref{smash}, which
can be pulled back to $A_I$ via $\varphi_I:A_I\to B_I\# H$ (the module
of one forms being $\Gamma_{B_I\#H}$ since $\varphi_I$ are isomorphisms). However this local
data does not give a FODC on  $\mathcal{F}_{\mathrm{GL}_q}$. Indeed
for that to be the case the restriction morphisms $r_{IJ}^{\!\Upsilon_\#}$ of the would be
sheaf ${\Upsilon_\#}$ of covariant 
$\mathcal{F}_{\mathrm{GL}_q}$-bimodules would have to be left 
$\mathcal{F}_{\mathrm{GL}_q}$-linear and compatible with the
differentials: $\mathrm{d}_{B_I\#H}\circ \varphi_I\circ
r_{IJ}= r_{IJ}^{\!\Upsilon_\#}\circ \mathrm{d}_{B_J\#H}\circ \varphi_J$. Recalling that 
$\Gamma_{B_I\#H}$ are free left ${B_I\#H}$-bimodules it follows form
straightforward computations that there is not such map $
r_{12,2}^{\!\Upsilon_\#}:\Gamma_{B_2\#H}\to \Gamma_{B_{12}\#H}$.

As a consequence we have that in this example the Ore extended differential calculi $(\Gamma_{A_I}, \mathrm{d}_I)$ on
   $A_I\cong B_I\#H$ are not isomorphic to the smash product calculi
   $(\Gamma_{B_I\#H}, \mathrm{d}_{B_I\#H})$. While the latter, from a bottom
   up approach, could seem to be the canonical choice, it turns out
   that it  is the first one to provide a FODC on the sheaf
   $\mathcal{F}_{\mathrm{GL}_q}$.
 This is yet another instance of the non uniqueness of the
   differential calculus construction.
\bigskip

{\bf Acknowledgments}.
We wish to thank A. Carotenuto, F. Gavarini, A. Krutov, R. O'Buachalla, C. Pagani, F. Zanchetta
 for helpful conversations. 
R.F. and E.L. wish to thank the DISIT at the 
Universit\`a del Piemonte Orientale for the warm hospitality during
the completion of this work. 
All authors wish to thank the Dept. of Mathematics at Charles University,
Prague, for the warm hospitality during the completion of this work.

The work of P.A. and T.W. is partially
supported by INFN, CSN4, Iniziativa Specifica GSS, and by Universit\`a del
Piemonte Orientale. P.A. is also
affiliated to INdAM, GNFM (Istituto Nazionale di Alta Matematica,
Gruppo Nazionale di Fisica Matematica). 
R.F. and E.L. are affiliated to INFN, GAST, Bologna.
 R.F. is also affiliated to INdAM, GNSAGA.
This research was funded by COST:  CaLISTA CA 21109.

\end{document}